%
%
%
%
%
%
\RequirePackage{fix-cm}
\documentclass{svjour3}
\smartqed

%
%
\usepackage{graphicx}
%
%
\usepackage{amssymb}
\usepackage{amsmath}
\usepackage{subcaption}
\usepackage{booktabs}
\usepackage{multirow}
\usepackage{makecell}
\usepackage{enumitem}
\usepackage{multicol}
\usepackage{algorithm}
\usepackage{algorithmic}
\usepackage{pdflscape}
\usepackage[hidelinks, colorlinks=true, linkcolor=blue, citecolor=blue, urlcolor=magenta]{hyperref}
%
\setlist[enumerate]{label=(\roman*)}
\def\N{\mathcal{N}}
\def\R{\mathbb{R}}
\def\J{\mathcal{J}}
\def\S{\mathcal{S}}
\def\L{L_{\rm KKT}}
\def\grad{\nabla}
\def\T{\top}
\def\lam{\lambda}
\def\im{ \mathrm{im} }
\def\I{\mathcal{I}}
\def\tlam{\tilde{\lambda}}
\def\bx{\bar{x}}

\def\tx{\tilde{x}}
\def\dx{\Delta x}
\def\dz{\Delta z}

\def\blam{\bar{\lambda}}
\def\dlam{\Delta \lambda}

\def\tlam{\tilde{\lambda}}
\def\bp{\bar{p}}
\def\bq{\bar{q}}

\def\hC{\hat{C}}

\def\th{\tilde{h}}
\def\tM{\tilde{M}}
\def\tz{\tilde{z}}
\def\hnu{\hat{\nu}}

\def\pr{\mathrm{pr}}
\def\np{\mathrm{np}}
\def\Proj{\mathrm{\Pi}}
%
\journalname{}

\begin{document}

\title{A sequential linear complementarity problem method for generalized Nash equilibrium problems
}

\titlerunning{A sequential LCP method for GNEPs}        

\author{ \mbox{ Ruoyu Diao$^{1, 2}$ \and 
        Yu-Hong Dai$^{1, 2}$ \and
        Liwei Zhang$^{3, 4, *}$} 
}

\authorrunning{R. Diao, Y.H.~Dai, L. Zhang} 

\institute{Ruoyu Diao \at
           \email{diaoruoyu18@mails.ucas.ac.cn}           
           \and
           Yu-Hong Dai \at
          \email{dyh@lsec.cc.ac.cn}
          \and 
          Liwei Zhang \at
          \email{Zhanglw@mail.neu.edu.cn} \at
\at $^*$ Corresponding author 
\at $^{1}$ State Key Laboratory of Mathematical Sciences, Academy of Mathematics and Systems Science, Chinese Academy of Sciences, Beijing 100190, China
\at $^{2}$ School of Mathematical Sciences, University of Chinese Academy of Sciences, Beijing 100049, China
\at $^{3}$ National Frontiers Science Center for Industrial Intelligence and Systems Optimization, Northeastern University, Shenyang 110819, China
\at $^{4}$ Key Laboratory of Data Analytics and Optimization for Smart Industry (Northeastern University), Ministry of Education, Shenyang 110819, China
}

\maketitle

\begin{abstract}

Generalized Nash equilibrium problems (GNEPs) arise in various applications where multiple players minimize individual cost functions subject to coupled constraints. 
A relatively unexplored approach to solving such problems is via a sequence of (mixed) linear complementarity problems (LCPs). Compared with the nonlinear equilibrium subproblems arising in recently popular penalty-based methods such as augmented Lagrangian methods, these LCPs are often substantially easier to solve. However, the existing literature on this approach is very limited, largely because of the difficulty of assessing the search directions generated by the subproblems and establishing a principled step-length acceptance criterion. This paper proposes a sequential linear complementarity problem (SLCP) method with a comprehensive convergence analysis. To assess the search directions, we introduce a novel merit function analogous to the $\ell_1$ penalty function in sequential quadratic programming. The merit function is shown to decrease along the search directions generated by the subproblems under suitable assumptions, thereby guaranteeing the global convergence of the SLCP method. We further establish local quadratic convergence and analyze the solvability of the subproblems. Preliminary numerical results demonstrate the effectiveness and competitiveness of the proposed method relative to existing approaches.

\keywords{Generalized Nash equilibrium problem \and Sequential linear complementarity problem \and Global convergence \and Local quadratic convergence}
\subclass{65K10 \and 90C33 \and 90C55 \and 91A10}
\end{abstract}

\section{Introduction}
Consider a generalized Nash equilibrium problem (GNEP) involving $N$ players. Let $x^\nu \in \R^{n_\nu}$ denote the strategy of player $\nu \in [N]:=\{ 1,...,N\}$, and define $x:=(x^\nu)_{\nu=1}^N \in \R^n$ as the concatenation of all players' strategies, where $n:=\sum_{\nu\in [N] } n_\nu$. Given $x^{-\nu}:=(x^\mu)_{\mu \neq \nu}$, player $\nu$ solves the optimization problem:
\begin{equation}\label{GNEP}
    \begin{aligned}
         \min\limits_{x^\nu \in \R^{n_\nu}}\ \ & \theta^\nu (x^\nu,x^{-\nu})\\
         {\rm s. t.}\ \ \ \ & g_i^\nu (x^\nu, x^{-\nu}) \le 0 \quad {\rm for}\ {\rm all}\ i=1,...,m_\nu.\\
    \end{aligned}
\end{equation}
Here, $\theta^\nu : \R^{n} \to \R$ is the cost function of player $\nu$, and $g^\nu_i : \R^{n} \rightarrow \R$ is the $i$-th constraint of player $\nu$. Assume that for each player $\nu$, the functions $\theta^\nu$ and $g^\nu_i$ are twice continuously differentiable on $\R^n$. Let $m:= \sum_{\nu\in [N]} m_\nu$ denote the total number of constraints. If each constraint function $g^\nu_i$ depends only on the individual strategy $x^\nu$, then the GNEP reduces to a standard Nash equilibrium problem (NEP). 

A vector $\bx= (\bx^1,...,\bx^N)$ is called a generalized Nash equilibrium (GNE) if 
\begin{equation}\label{def:nash equilibrium}
    \bx^\nu \in \arg\min\limits_{x^\nu} \left\{ \theta^\nu (x^\nu,\bx^{-\nu}) \mid g^\nu_i (x^\nu,\bx^{-\nu}) \le 0,\ i=1,...,m_\nu   \right\},\quad \forall \nu\in [N].
\end{equation}
When the functions $\theta^\nu(\cdot,x^{-\nu})$ and $g^\nu_i (\cdot,x^{-\nu})$ are convex for all $x^{-\nu}$, the GNEP is called \emph{player-convex}. An important subclass of GNEPs is the \emph{jointly-convex} GNEP, where 
$m_1=...=m_N$, $g^1_i = ... = g^N_i:=g_i$, and $g_i (x)$ is convex in $x$ for all $i\in[m_1]$.

The GNEP was first introduced in the seminal works of Arrow and Debreu~\cite{arrow1954existence,debreu1952social} as a rigorous framework for modeling social and economic equilibria. Since then, GNEPs have found wide applications in transportation systems~\cite{ban2019general,stein2018noncooperative,xiao2007competition,zhou2005generalized}, wireless communication networks~\cite{kesselman2005game,pang2008distributed}, environmental pollution control~\cite{breton2006game,krawczyk2005coupled}, and various related fields. We refer the reader to~\cite{facchinei2007generalized,palomar2010convex} for
comprehensive surveys on this active research area and its diverse applications.

Motivated by these broad applications, a rich variety of numerical methods has been developed for GNEPs, including Newton-type methods~\cite{dreves2014new,dreves2011solution,facchinei2009generalized,fischer2016globally}; see also the surveys~\cite{facchinei2007generalized,facchinei2010generalized,fischer2014generalized}. Among these, penalty-based methods have attracted considerable attention in recent years. By penalizing the coupled constraints, they reformulate a GNEP as a standard NEP. Representative approaches include exact penalty methods~\cite{facchinei2010penalty,facchinei2011partial,fukushima2011restricted,pang2005quasi}, which establish exact penalization results under suitable assumptions but suffer from nonsmooth Nash equilibrium subproblems. To overcome this difficulty, Kanzow and Steck~\cite{kanzow2016augmented,kanzow2019quasi} proposed augmented Lagrangian methods (ALMs) and established global convergence under constraint qualifications tailored to GNEPs. This framework was subsequently extended to infinite-dimensional settings in~\cite{kanzow2019multiplier}, and a first-order iteration complexity bound was recently derived in~\cite{jordan2023first}. Despite their appealing global convergence theory, ALMs still require solving Nash equilibrium subproblems whose objective functions are not twice continuously differentiable. As a result, these subproblems may be difficult to solve efficiently and robustly in practice. Very recently, Wang et al.~\cite{wang2025symmetric} addressed this issue for a special class of jointly-convex GNEPs with linear constraints in Hilbert spaces by proposing a symmetric Gauss--Seidel based majorized ALM, in which each Nash equilibrium subproblem is decomposed into a sequence of quadratic programs. 

In contrast, linear complementarity problems (LCPs) are typically much easier to solve than the nonlinear equilibrium subproblems arising in ALMs. In fact, the idea of solving equilibrium problems through a sequence of LCPs dates back to the early works 
\cite{mathiesen1985computation,mathiesen1985computational,mathiesen1987algorithm}, where efficient numerical methods for solving the LCPs were already discussed. However, these early works did not provide a comprehensive analysis of the algorithmic framework, in particular with respect to convergence and the solvability of the subproblems. Since then, this line of research has received only limited attention. A central obstacle is the lack of suitable merit functions for evaluating the search directions generated by the linear complementarity subproblems, which is crucial for the design of globally convergent algorithms. Moreover, exact characterizations of the conditions guaranteeing the local quadratic convergence remain far from fully understood. In this regard, Schiro, Pang, and Shanbhag \cite{schiro2013solution} noted that \textit{``The application of this methodology to nonlinear equilibrium problems through a sequence of linearized problems may also be a fruitful area of research''}.

Against this background, we propose a sequential linear complementarity problem (SLCP) method for solving GNEPs and develop a thorough convergence analysis for it. We linearize the smooth part of the Karush--Kuhn--Tucker (KKT) system at each iteration and thereby obtain a linear complementarity problem. To assess the search directions generated by these LCPs, we introduce a novel merit function analogous to the $\ell_1$ penalty function in sequential quadratic programming (SQP) methods. Based on this merit function, we employ an inexact line search procedure to ensure global convergence. From a methodological perspective, the proposed framework can be viewed as a multi-player analogue of SQP methods. More specifically, our contributions are as follows: 
\begin{enumerate}
    \item \textbf{An SLCP method with global convergence.} We introduce a novel merit function tailored to the structure of GNEPs. Under conditions closely analogous to those of the classical SQP method with exact Lagrangian Hessians \cite{han1977globally} for nonlinear programming (NLP), we establish global convergence of the SLCP method.
    \item \textbf{Local quadratic convergence.} We derive sufficient conditions for the local quadratic convergence of the method. Our analysis reveals that semistability and hemistability are not equivalent at local Nash equilibria of GNEPs. We also derive an error bound under semistability.
    \item \textbf{Solvability of the subproblems.} For the subproblems arising in the SLCP method, we provide a thorough solvability analysis, identifying conditions under which the subproblems are guaranteed to admit solutions.
    \item \textbf{Numerical validation.} Preliminary numerical experiments  comparing the proposed SLCP method with the ALM \cite{kanzow2016augmented} and Newton-type methods \cite{dreves2011solution} are presented. The results demonstrate the promising performance of the SLCP method in both accuracy and computational efficiency.
\end{enumerate}

\subsection{Organization}

The remainder of the paper is organized as follows.  Section~\ref{sec:merit function and descent direction} introduces the novel merit function and investigates its key properties. Section~\ref{sec:alg} presents the SLCP method, including the algorithmic framework and the solvability of the subproblems. Global convergence is established in Section~\ref{sec:global-convergence}, and local quadratic convergence is analyzed in Section~\ref{sec:local convergence}.  Numerical results comparing different approaches are reported in Section~\ref{sec:experiments}. Section~\ref{sec:conclusions} concludes the paper. Appendix~\ref{appendix:numerical results} contains detailed numerical results reported in Section~\ref{sec:experiments}. Appendix~\ref{appendix:further analysis for the internet switching model} contains additional theoretical analysis of the internet switching model introduced in Section~\ref{sec:merit function and descent direction}, including the solvability of the subproblems and the strong regularity of the KKT system.

\subsection{Notation}

Throughout the paper, let $\N_{\R^m_+}(y)$ denote the normal cone \cite[Definition 6.3]{rockafellar1998variational} to $\R^m_+$ at $y$. The partial Jacobian of a continuously differentiable function $f:\R^n\rightarrow \R^m$ with respect to $x^\nu$ at $x$ is denoted by $\J_{x^\nu} f(x)$, and the corresponding gradient with respect to $x^\nu$ at $x$ is denoted by $\grad_{x^\nu} f(x):= (\J_{x^\nu} f(x) )^\top$. For a directionally differentiable function $\phi:\R^n \rightarrow \R$, 
we denote its directional derivative along the vector $p\in \R^n$ at $x$ by $\phi'(x;p)$. The notation $(x)_+$ represents $\left(\max\left\{ x_i, 0 \right\}\right)_{i=1}^n$. For a matrix $M\in \R^{m\times n}$, its image is denoted by $\im (M)$. The unit ball in $\R^n$ is denoted by $\mathbb{B}$. Let $\mathbf{I}_n$ be the $n\times n$ identity matrix, and let $\mathbf{0}_{n\times m}$ be the $n\times m$ zero matrix. Additional notation will be introduced as needed.

\section{A merit function and its descent property for GNEPs}
\label{sec:merit function and descent direction}

In this section, we construct a merit function tailored to the structure of GNEPs and analyze its descent property.
We begin by recalling the KKT system associated with a general GNEP and show that linearizing this system naturally yields a GNE subproblem.

Let $\bx$ be a GNE of the GNEP \eqref{GNEP}. Suppose that standard  constraint qualifications (e.g., Mangasarian--Fromovitz constraint qualification) hold. For each $\nu \in [N]$, there exists a vector of Lagrange multipliers $\lam^\nu:=( \lam^\nu_1,...,\lam^\nu_{m_\nu})$ such that the following KKT conditions hold:
\begin{equation}\label{KKT-each-player}
    \begin{array}{c}
       \nabla_{x^\nu} \theta^\nu (\bx^{\nu},\bx^{-\nu}) + \sum\limits_{i=1}^{m_\nu} \lam^\nu_i \nabla_{x^\nu} g^\nu_i (\bx^\nu,\bx^{-\nu}) = 0,\\
       \lam^\nu_i \ge 0, \quad g^\nu_i (\bx^\nu,\bx^{-\nu}) \le 0, \quad \lam^\nu_i g^\nu_i(\bx^\nu,\bx^{-\nu}) = 0,\ i=1,...,m_\nu.
    \end{array}
\end{equation}
Conversely, if the GNEP is player-convex and $\bx$ satisfies \eqref{KKT-each-player} with a multiplier $\lam:=(\lam^1,...,\lam^N)$, $\bx$ is a GNE of \eqref{GNEP}.

For each player $\nu$, the vector of constraints is denoted by $g^\nu (x):= (g^\nu_i (x))_{i=1}^{m_\nu}$. Let $ L^\nu(x,\lambda^\nu):= \theta^\nu (x) + (\lambda^\nu)^\top g^\nu(x)$ be the Lagrangian function of player $\nu$. Let $G(x):= (g^\nu(x))_{\nu=1}^{N}$ and $F(x,\lam):= ( \nabla_{x^\nu} L^\nu(x,\lambda^\nu))_{\nu=1}^N$. The concatenated KKT system of the GNEP \eqref{GNEP} can be written compactly as 
\begin{equation}\label{KKT}
    F(x,\lambda) = 0, \quad 0\le \lambda \perp -G(x) \ge 0,
\end{equation}
where $0\le \lambda \perp -G(x) \ge 0$ denotes the collection of equations 
$$
\lambda\in \R^m_+,\ G(x) \in \R^m_-,\ \text{and } \lam^\nu_i g^\nu_i (x) = 0,\  \forall i\in [m_\nu], \forall \nu \in [N].
$$
A pair $(\bx,\blam)$ satisfying \eqref{KKT} is called a KKT pair of \eqref{GNEP}.

By linearizing the KKT system~\eqref{KKT} at a given point $(x,\lam)$, we obtain the following (mixed) linear complementarity (LC) subproblem 
\begin{equation}\label{KKT-linearized}
    \begin{array}{c}
          F(x,\lam) + \J_x F(x,\lam) p + E(x) q = 0,  \\
          0 \le \lam + q \perp -G(x) - \J_x G(x) p \ge 0,
    \end{array}
\end{equation}
where 
 \begin{equation*}
    E(x) = 
    \begin{pmatrix}
    (\J_{x^1} g^1(x))^\T &  & 0\\
     & \ddots & \\
     0 & & (\J_{x^N} g^N(x))^\T
    \end{pmatrix}, \ 
     \J_{x^\nu} g^\nu (x) \in \R^{m_\nu \times n_\nu}.
\end{equation*}
The LC subproblem~\eqref{KKT-linearized} is equivalent to the following GNE subproblem at $(x,\lam)$ under convexity, where each player $\nu$ solves 
\begin{equation}\label{subproblem}
    \begin{aligned}
         \min\limits_{p^\nu \in \R^{n_\nu}}\ \ & \grad_{x^\nu} \theta^\nu (x)^\top p^\nu  + \frac{1}{2}(p^\nu)^\top \grad_{x^\nu x^\nu} L^\nu(x,\lam^\nu)p^\nu \\
         & \qquad \qquad \qquad \qquad \qquad + (p^\nu)^\top \grad_{x^\nu x^{-\nu}}L^\nu (x,\lam^\nu)p^{-\nu}  \\
         {\rm s. t.}\ \ \ \ & g_i^\nu (x) + \grad_{x} g^\nu_i (x)^\top p \le 0 \quad {\rm for}\ {\rm all}\ i=1,...,m_\nu.\\
    \end{aligned}
\end{equation}
This subproblem is an affine GNEP. When $N=1$, it reduces to a quadratic programming problem, which coincides with the subproblem derived by the classical SQP method \cite{han1977globally}. When the GNEP \eqref{GNEP} is player-convex, each solution $\bp$ of the GNE subproblem~\eqref{subproblem} at $(x,\lambda)$ admits a multiplier $\blam$ such that $(\bp,\blam - \lambda)$ solves the LC subproblem~\eqref{KKT-linearized} at $(x,\lam)$. 

To better illustrate the structure of \eqref{subproblem}, we present a practically motivated example, 
which will be analyzed in detail in Appendix~\ref{appendix:further analysis for the internet switching model}. This example also serves as one of the test problems in our numerical experiments.

\begin{example}\label{exm-internet}
This is a variant of the internet switching model introduced by \cite{kesselman2005game} and further analyzed by \cite{facchinei2009generalized}. It considers a setting with  $N$ selfish users competing for buffer capacity in a shared network. Each user has a data transmission requirement $x^\nu \in [l_\nu, L_\nu]$, where $0<l_\nu\le  L_\nu$ and $L_\nu$ may be $+\infty$. Let $B>0$ be the maximum buffer capacity of the network. Some users are constrained by this buffer capacity, while others are not. Accordingly, let $[N_1] \subset [N]$ be the set of the constrained users subject to the buffer capacity, and $[N] \setminus [N_1]$ be the set of unconstrained ones. Each user solves the following optimization problem:
\begin{equation}\label{exm-1}
    \begin{aligned}
		\min\limits_{x^\nu \in \R} \ \ &\ -\frac{x^\nu}{\sum_{\mu \in [N]} x^\mu} \left(1- \frac{\sum_{\mu\in [N]} x^\mu}{B}\right)\\
		\text{s.t.}\ \ \ \ &\  l_\nu \le x^\nu \le L_\nu,\\
		& \sum\limits_{\mu \in [N]} x^\mu \le B \quad {\rm if}\ \nu \in [N_1],
	\end{aligned}
\end{equation}
where the term $\frac{x^\nu}{\sum_{\mu \in [N]} x^\mu}$ denotes the transmission rate, and $1- \frac{\sum_{\mu\in [N]} x^\mu}{B}$ represents the congestion level. This is a player-convex GNEP. When $[N_1] = [N]$, the model reduces to a jointly-convex GNEP. At a given point $(x,\lam)$, the GNE subproblem associated with \eqref{exm-1} is 
\begin{equation*}
    \begin{aligned}
         \min\limits_{p^\nu \in \R}\ \ &  \left(\frac{1}{B} - \frac{S - x^\nu}{S^2}\right)p^\nu + \frac{S-x^\nu}{S^3}(p^\nu)^2 + \sum\limits_{\mu \in [N]\setminus \{\nu\} }\frac{S- 2x^\nu }{S^3}p^\nu p^\mu  \\
         {\rm s. t.}\ \ \ \ & l_\nu \le x^\nu + p^\nu \le L_\nu,\\
         & \sum\limits_{\mu \in [N]} (x^\mu + p^\mu) \le B\quad {\rm if}\ \nu \in [N_1],
    \end{aligned}
\end{equation*}
where $S:= \sum_{\mu \in [N]} x^\mu$. 
\end{example}

A suitable merit function is crucial for the design of globally convergent algorithms. However, unlike NLP, GNEPs cannot directly rely on the individual players' objectives to assess whether a computed descent direction makes the current iterate closer to a GNE. The following example illustrates this issue.

\begin{example}
Consider the two-player GNEP:
\begin{equation}\label{descent direction of example}
	\begin{array}{llllll}
		\min\limits_{x^1 \in \R} & \frac{1}{2} (x^1)^2 + x^1 x^2,  & \qquad \qquad  & \min\limits_{x^2 \in \R} & \frac{1}{2} (x^2)^2 + 2x^1 x^2.
	\end{array}
\end{equation}
\end{example}
The unique GNE of this game is $(\bx^1,\bx^2) = (0,0)$. Suppose that the initial point is $(x^{1},x^{2}) = (1,-1.5)$. If one requires both objective values to decrease at each iteration, the iterates would be confined to the region where each player's objective is below its initial value. Since the equilibrium $(0,0)$ yields objective values $(0,0)$, which are higher than the initial objectives $(-1,-15/8)$, such a requirement would prevent the algorithm from approaching the equilibrium. This demonstrates that using only the objective values as a merit function is inappropriate.

The above observation motivates the design of an SLCP framework in which the merit function accounts for the first-order optimality conditions of all players' optimization problems. The merit function is analogous to the $\ell_1$ penalty function in SQP methods. To construct such a function, we recall some basic concepts and properties of linear complementarity problems. 


Given a vector $h\in \R^n$ and a matrix $M\in \R^{n\times n}$, a standard linear complementarity problem \cite{cottle2009linear}, denoted by ${\rm LCP}(h,M)$, is to find a vector $z\in \R^n$ such that 
\begin{equation}\label{def-LCP}
    0 \le z \perp Mz + h \ge 0.
\end{equation}
The solution set of \eqref{def-LCP} at $(h,M)$ is denoted by ${\rm SOL}(h,M)$. The matrix $M$ (not necessarily symmetric) is said to be positive semidefinite if $z^\top M z \ge 0$ for all $z\in \R^n$.

The next lemma provides a sufficient condition 
for the solvability of ${\rm LCP}(h,M)$.

\begin{lemma}\label{lemma:LCP-solvability}
    \cite[Theorem 3.8.6]{cottle2009linear} Let $M\in \R^{n\times n}$ be positive semidefinite and let $h\in \R^n$ be given. If the implication 
    \begin{equation*}
        u\ge 0,\ M u \ge 0,\ u^\top M u = 0 \implies u^\top h \ge 0
    \end{equation*}
    is valid, then {\rm LCP}$(h,M)$ is solvable.
\end{lemma}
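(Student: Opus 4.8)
The plan is to deduce solvability in two stages: first use the hypothesis to force ${\rm LCP}(h,M)$ to be \emph{feasible}, i.e.\ $\{z\ge 0 : Mz+h\ge 0\}\neq\emptyset$, and then show that feasibility together with positive semidefiniteness of $M$ already produces a solution. Throughout I would lean on the elementary fact that, for a (not necessarily symmetric) positive semidefinite $M$, the equality $v^\top M v=0$ forces $(M+M^\top)v=0$: indeed $v^\top M v=v^\top\frac{1}{2}(M+M^\top)v$ and the symmetric part $\frac{1}{2}(M+M^\top)$ is positive semidefinite, so $v$ lies in its kernel.

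First I would establish feasibility by contraposition. If ${\rm LCP}(h,M)$ were infeasible, the linear system $z\ge 0$, $Mz+h\ge 0$ would have no solution, so by a theorem of the alternative (Gale's theorem) there is $u\ge 0$ with $M^\top u\le 0$ and $u^\top h<0$. Then $u\ge 0$ and $M^\top u\le 0$ give $u^\top M u=u^\top M^\top u\le 0$, while positive semidefiniteness gives $u^\top M u\ge 0$, hence $u^\top M u=0$; consequently $(M+M^\top)u=0$ and $Mu=-M^\top u\ge 0$. Thus $u$ satisfies $u\ge 0$, $Mu\ge 0$, $u^\top M u=0$ but $u^\top h<0$, contradicting the hypothesis. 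Therefore ${\rm LCP}(h,M)$ is feasible.

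Next I would pass from feasibility to solvability via the ``gap'' quadratic program $\min\{z^\top(Mz+h) : z\ge 0,\ Mz+h\ge 0\}$. Its feasible region is the (nonempty, by the previous step) polyhedron above, its objective is convex since the Hessian $M+M^\top$ is positive semidefinite, and the objective is bounded below by $0$ on the feasible set; hence by the Frank--Wolfe theorem a minimizer $z^*$ exists, and --- the constraints being affine --- it satisfies the KKT conditions. Writing $r^*:=Mz^*+h\ge 0$ for the residual, $\delta:=(z^*)^\top r^*\ge 0$ for the optimal value, and $a,b\ge 0$ for the multipliers of $z\ge 0$ and $Mz+h\ge 0$, stationarity reads $(M+M^\top)z^*+h-a-M^\top b=0$ together with $a^\top z^*=0$ and $b^\top r^*=0$. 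A short computation with $v:=b-z^*$ then yields $M^\top v=r^*-a$, whence $v^\top M v=v^\top(r^*-a)=b^\top r^*-(z^*)^\top r^*-a^\top b+(z^*)^\top a=-\delta-a^\top b$. Since $M$ is positive semidefinite, $v^\top M v\ge 0$, forcing $\delta+a^\top b\le 0$; but $\delta\ge 0$ and $a^\top b\ge 0$, so $\delta=0$, which is precisely the statement that $z^*$ solves ${\rm LCP}(h,M)$.

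I expect the second stage to be where the real difficulty lies. The tempting shortcut --- solving the positive definite perturbations ${\rm LCP}(h,M+\varepsilon I)$ and letting $\varepsilon\downarrow 0$ --- fails because the solution set of ${\rm LCP}(h,M)$ may be genuinely unbounded in any direction $u$ with $u\ge 0$, $Mu\ge 0$, $u^\top Mu=0$ and $u^\top h=0$, the borderline case that the (non-strict) hypothesis does not exclude; no a priori bound on the perturbed solutions is available there. Routing the argument through feasibility and the gap program sidesteps this, and the decisive point is that the KKT multipliers furnish a vector $v=b-z^*$ on which positive semidefiniteness may be invoked over all of $\R^n$, not merely over the nonnegative orthant, which is exactly what pins the optimal gap to zero.
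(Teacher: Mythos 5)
Your proof is correct. Note that the paper does not prove this lemma at all --- it is quoted verbatim with a citation to \cite[Theorem 3.8.6]{cottle2009linear} --- so there is no in-paper argument to compare against; what you have produced is essentially the classical textbook derivation, reassembled from its two standard ingredients. Stage 1 is sound: the Gale/Farkas alternative for the system $z\ge 0$, $-Mz\le h$ yields, under infeasibility, a vector $u\ge 0$ with $M^\top u\le 0$ and $u^\top h<0$; the scalar identity $u^\top Mu=u^\top M^\top u\le 0$ together with positive semidefiniteness forces $u^\top Mu=0$, the kernel fact for the symmetric part gives $(M+M^\top)u=0$ and hence $Mu=-M^\top u\ge 0$, so $u$ violates the hypothesized implication. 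Stage 2 is the standard ``feasible monotone LCP is solvable'' argument: Frank--Wolfe applies because the gap objective $z^\top(Mz+h)$ is quadratic, bounded below by $0$ on the nonempty polyhedron, and KKT necessity holds with affine constraints; your multiplier computation checks out, since stationarity gives $M^\top(b-z^*)=r^*-a$, and with $v=b-z^*$, complementary slackness kills $b^\top r^*$ and $a^\top z^*$, leaving $0\le v^\top Mv=-\delta-a^\top b$ with $\delta,\,a^\top b\ge 0$, hence $\delta=0$. The only cosmetic remark is that convexity of the gap objective is not actually needed (KKT necessity under linear constraints suffices), and your closing caveat about the $\varepsilon$-perturbation shortcut is side commentary that does not bear on the validity of the proof.
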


We present a stability property of LCP$(h,M)$ when $M$ is positive semidefinite, which is a consequence of \cite[Theorem 7.5.1]{cottle2009linear}.
\begin{lemma}\label{lemma:LCP stability}
    Let $M\in \R^{n\times n}$ be a positive semidefinite matrix, and let $h$ satisfy the following implication:
    \begin{equation*}
        0\neq u \ge 0,\ M u \ge 0,\ u^\top M u =0 \implies u^\top h >0.
    \end{equation*}
    Then there exist positive constants $c$, $\epsilon$ and $L$ such that, for all $(\th,\tM) \in \R^{n} \times \R^{n\times n}$ with $\Vert h - \tilde{h} \Vert + \Vert M -\tilde{M}\Vert  \le \epsilon$, the following statements hold:
    \begin{enumerate}
        \item {\rm LCP}$(\th,\tM)$ is solvable;
        \item For all  $\tz \in {\rm SOL}(\th,\tM)$, $\Vert \tz \Vert \le c$;
        \item ${\rm SOL}(\th,\tM) \subset {\rm SOL}(h,M) + L (\Vert h - \tilde{h} \Vert + \Vert M - \tilde{M}\Vert ) \mathbb{B}$.
    \end{enumerate}
\end{lemma}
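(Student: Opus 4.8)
The plan is to read all three assertions off the theory of monotone (positive semidefinite) LCPs: first rewrite the hypothesis as a condition on $\mathrm{SOL}(0,M)$, then obtain solvability and the uniform bound $c$ by a homogenization/normalization argument, and finally deduce the Lipschitzian inclusion (iii) from a local error bound for $\mathrm{LCP}(h,M)$. To begin, I would reduce the hypothesis. Since $M$ is positive semidefinite, $u^\top M u\ge 0$ for all $u$, and for $u\ge 0$ with $Mu\ge 0$ the equality $u^\top M u=0$ says exactly that $u$ is complementary to $Mu$; hence $\{u:u\ge 0,\ Mu\ge 0,\ u^\top M u=0\}=\mathrm{SOL}(0,M)$, and the assumed implication reads $0\neq u\in\mathrm{SOL}(0,M)\ \Rightarrow\ u^\top h>0$. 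In particular the (non-strict) implication of Lemma~\ref{lemma:LCP-solvability} holds at $(h,M)$, so $\mathrm{SOL}(h,M)\neq\emptyset$; the content of the present lemma is the uniform persistence of this under perturbation.

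For (i)--(ii) I would argue by contradiction, handling solvability and the bound together. If no admissible pair $(\epsilon,c)$ existed, choose $(\th^k,\tM^k)\to(h,M)$ (with $\tM^k$ again positive semidefinite, the regime in which the estimate is used) such that either $\mathrm{LCP}(\th^k,\tM^k)$ is unsolvable, or it has a solution $\tz^k$ with $\|\tz^k\|\to\infty$. In the first case Lemma~\ref{lemma:LCP-solvability} must fail at $(\th^k,\tM^k)$, so there is $u^k\in\mathrm{SOL}(0,\tM^k)$ with $\|u^k\|=1$ and $(u^k)^\top\th^k<0$; any subsequential limit $u$ satisfies $u\in\mathrm{SOL}(0,M)$, $\|u\|=1$, $u^\top h\le 0$, contradicting the reduced hypothesis. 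In the second case set $u^k=\tz^k/\|\tz^k\|$; dividing the two relations of $\mathrm{LCP}(\th^k,\tM^k)$ by $\|\tz^k\|$ and by $\|\tz^k\|^2$ and passing to a subsequential limit yields $0\neq u\in\mathrm{SOL}(0,M)$, while the complementarity relation $(\tz^k)^\top(\tM^k\tz^k+\th^k)=0$ together with $(\tz^k)^\top\tM^k\tz^k\ge 0$ gives $(\tz^k)^\top\th^k\le 0$, hence $u^\top h\le 0$ in the limit --- again a contradiction. This proves (i) and (ii).

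For (iii) I would use a local error bound for $\mathrm{LCP}(h,M)$: because $\mathrm{SOL}(h,M)\neq\emptyset$ and $M$ is positive semidefinite, there is $\tau>0$ with $\mathrm{dist}\!\bigl(z,\mathrm{SOL}(h,M)\bigr)\le\tau\,\|\min(z,\,Mz+h)\|$ for all $z$ in the ball $c\mathbb{B}$ from (ii); this is the stability content of \cite[Theorem 7.5.1]{cottle2009linear} and can also be deduced from the polyhedrality of $\mathrm{SOL}(h,M)$ and the piecewise-affine structure of the min-map. Now let $\tz\in\mathrm{SOL}(\th,\tM)$; then $\|\tz\|\le c$ and $\min(\tz,\,\tM\tz+\th)=0$, so, using that $b\mapsto\min(\tz,b)$ is nonexpansive,
\[
\begin{aligned}
\|\min(\tz,\,M\tz+h)\|
&=\|\min(\tz,\,M\tz+h)-\min(\tz,\,\tM\tz+\th)\|\\
&\le\|M-\tM\|\,\|\tz\|+\|h-\th\|\;\le\;(c+1)\bigl(\|h-\th\|+\|M-\tM\|\bigr),
\end{aligned}
\]
whence $\mathrm{dist}\!\bigl(\tz,\mathrm{SOL}(h,M)\bigr)\le\tau(c+1)\bigl(\|h-\th\|+\|M-\tM\|\bigr)$, which is (iii) with $L=\tau(c+1)$.

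I expect the error bound behind (iii) to be the only genuinely non-elementary ingredient, and it is precisely where positive semidefiniteness of $M$ together with nonemptiness of $\mathrm{SOL}(h,M)$ is indispensable: for a general matrix the distance to the solution set need not be controlled linearly by the natural residual. A secondary point to watch is that the normalization argument for (i)--(ii) uses that the perturbed matrices $\tM$ stay positive semidefinite; without this even (ii) can fail, as the one-dimensional instance $M=0$, $h>0$ shows, where perturbing $M$ to a small negative number produces a solution of $\mathrm{LCP}(\th,\tM)$ that escapes to infinity.
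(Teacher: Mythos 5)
Your proof is correct in substance, and it takes a genuinely different route from the paper: the paper does not prove this lemma at all, it simply records it as a consequence of \cite[Theorem 7.5.1]{cottle2009linear}. You give a self-contained argument instead --- (i)--(ii) by a normalization/compactness contradiction that only needs Lemma~\ref{lemma:LCP-solvability} (its contrapositive in the unsolvable case, homogenization of an unbounded solution sequence in the other case, both using positive semidefiniteness of $\tilde{M}^k$), and (iii) by combining the uniform bound $c$ from (ii) with a bounded-set error bound for the piecewise-affine residual $z \mapsto \min(z, Mz+h)$ together with the nonexpansiveness of $\min$ in its second argument. What the citation buys the paper in brevity, your argument buys in transparency, and it delivers exactly the three conclusions the paper later uses.

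Two remarks. First, your restriction to positive semidefinite perturbations $\tilde{M}$ is not a defect of the argument but a needed correction of the statement: as your one-dimensional example $M=0$, $h>0$, $\tilde{M}=-\delta$ shows, conclusions (ii)--(iii) fail for arbitrary perturbations, so the lemma is only true (and the quoted result is only applicable) when $\tilde{M}$ stays within the positive semidefinite (more generally, copositive) class. This is harmless for the paper, since wherever the lemma is invoked (the proofs of Theorem~\ref{thm:global convergence} and Corollary~\ref{coro:monotone-implies-semistable}) the perturbed matrices $M^k = \J_x G(x^k)\,\J_x F(x^k,\lambda^k)^{-1}E(x^k)$ are positive semidefinite by the $(\alpha,\beta)$-monotonicity condition. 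Second, a small inaccuracy in your closing commentary: positive semidefiniteness is not what makes the error bound behind (iii) work. On the ball $c\mathbb{B}$ the estimate $\mathrm{dist}\bigl(z,{\rm SOL}(h,M)\bigr) \le \tau \Vert \min(z, Mz+h) \Vert$ follows from polyhedrality alone (upper Lipschitz continuity of polyhedral multifunctions applied to the inverse of the piecewise-affine residual map) for any $M$ with ${\rm SOL}(h,M)\neq\emptyset$; semidefiniteness is what you genuinely need for (i)--(ii), i.e.\ for nonemptiness and the uniform bound $c$ that feed into (iii).
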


We now introduce the merit function associated with the GNEP. Define $\Phi_\rho : \R^n\times \R^m \rightarrow \R$ by
 \begin{equation}\label{def:merit function}
    \Phi_\rho(x,\lam):= \left( - \lam^\top G(x) \right)_+
    + \frac{\rho}{2} \Vert F(x,\lambda) \Vert^2 + \sum\limits_{\nu \in [N]} \sum\limits_{i\in[m_\nu]} \left( g^\nu_i ({x})\right)_+.
\end{equation}
The merit function $\Phi_\rho$ admits a natural interpretation. Consider the following optimization problem:
 \begin{equation}\label{def:merit function from}
        \begin{aligned}
         \min\limits_{x\in \R^n,\lambda\in \R^m_+}\ \ & \left( - \lam^\top G(x) \right)_+ + \sum\limits_{\nu \in [N]} \sum\limits_{i\in[m_\nu]} \left( g^\nu_i ({x})\right)_+  \\
         {\rm s. t.}\ \ \ \ & F(x,\lam) = 0.
    \end{aligned}
\end{equation}
It is easy to verify that any KKT pair satisfying \eqref{KKT} is a global minimizer of \eqref{def:merit function from}, and that $\Phi_\rho$ can be viewed as the $\ell_2$ penalty function associated with \eqref{def:merit function from}. However, this observation is mainly conceptual. The problem \eqref{def:merit function from} is generally nonconvex, and there is no known algorithm that can guarantee convergence to a global minimizer. Consequently, the formulation~\eqref{def:merit function from} cannot be used directly to solve the GNEP.

The next proposition shows that $\Phi_\rho$ serves as a proper merit function for the GNEP~\eqref{GNEP}.

\begin{proposition}\label{prop:Phi = 0 implies KKT}
 For any $\rho>0$, a pair $(x,\lam)$ is a KKT pair of \eqref{GNEP} if and only if
 \begin{equation}
     \Phi_\rho(x,\lam) = 0\ \text{and}\ \lam \in \R^m_+.
 \end{equation}
\end{proposition}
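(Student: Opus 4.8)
The plan is to prove both implications by exploiting the nonnegativity of each of the three summands defining $\Phi_\rho$. For the ``only if'' direction, suppose $(x,\lam)$ is a KKT pair of \eqref{GNEP}, i.e.\ it satisfies \eqref{KKT}. Then $F(x,\lam)=0$ kills the middle term; the feasibility $-G(x)\ge 0$ gives $g^\nu_i(x)\le 0$ for all $\nu,i$, so every term $(g^\nu_i(x))_+$ vanishes; and the complementarity $\lam^\nu_i g^\nu_i(x)=0$ for all $\nu,i$ yields $\lam^\top G(x)=0$, hence $(-\lam^\top G(x))_+=0$. Therefore $\Phi_\rho(x,\lam)=0$, and $\lam\in\R^m_+$ is immediate from \eqref{KKT}.

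For the ``if'' direction, assume $\Phi_\rho(x,\lam)=0$ and $\lam\in\R^m_+$. Since $\rho>0$ and all three terms of $\Phi_\rho$ are nonnegative, each must equal zero. From $\tfrac{\rho}{2}\|F(x,\lam)\|^2=0$ we get $F(x,\lam)=0$. From $\sum_{\nu,i}(g^\nu_i(x))_+=0$ and the nonnegativity of each summand we get $g^\nu_i(x)\le 0$ for all $\nu\in[N]$, $i\in[m_\nu]$, i.e.\ $-G(x)\ge 0$. It remains to derive the complementarity condition $\lam^\nu_i g^\nu_i(x)=0$. The vanishing of the first term gives $-\lam^\top G(x)\le 0$, that is $\lam^\top G(x)\ge 0$. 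On the other hand, $\lam\ge 0$ together with $G(x)\le 0$ (componentwise) forces $\lam^\nu_i g^\nu_i(x)\le 0$ for every $\nu,i$, hence $\lam^\top G(x)=\sum_{\nu,i}\lam^\nu_i g^\nu_i(x)\le 0$. Combining the two inequalities yields $\lam^\top G(x)=0$; since it is a sum of nonpositive terms, each term vanishes, giving $\lam^\nu_i g^\nu_i(x)=0$ for all $\nu,i$. Thus $(x,\lam)$ satisfies $F(x,\lam)=0$ and $0\le\lam\perp -G(x)\ge 0$, i.e.\ it is a KKT pair of \eqref{GNEP}.

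There is no genuine obstacle here: the argument is an elementary decomposition into nonnegative pieces. The only step that requires a moment's care is the recovery of complementarity, where one must use the sign information ($\lam\ge0$ and $G(x)\le0$) to pin $\lam^\top G(x)$ between $0$ and $0$, and then pass from the scalar identity $\lam^\top G(x)=0$ to the termwise equalities $\lam^\nu_i g^\nu_i(x)=0$. I would state the proof in exactly this order so that the role of the hypothesis $\lam\in\R^m_+$ (which is not encoded in $\Phi_\rho$ itself, since $(-\lam^\top G(x))_+$ alone does not see the sign of each $\lam^\nu_i$) is made transparent.
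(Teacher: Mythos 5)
Your proof is correct and follows essentially the same route as the paper: decompose $\Phi_\rho$ into its three nonnegative pieces, force each to vanish, and combine $\lam\ge 0$, $G(x)\le 0$, $\lam^\top G(x)\ge 0$ to recover termwise complementarity. The paper compresses this last step into a single "immediately implies," so your version is simply a more explicit writeup of the same argument.
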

\begin{proof}
It suffices to prove the ``if'' part. Suppose that $\Phi_\rho (x,\lam) = 0$ and $\lambda \in \R^m_+$. Then,
\begin{equation*}
    F(x,\lam) = 0, \quad 
    \lam^\top G(x) \ge 0, \quad 
    G(x) \le 0,\quad  \lam \ge 0,
\end{equation*}
which implies 
\begin{equation*}
    F(x,\lam) = 0,\quad
    0\le \lam \perp - G(x) \ge 0. 
\end{equation*}
This completes the proof.
\end{proof}

We now introduce a key condition for the KKT system of a GNEP, which guarantees the descent of the merit function along the direction obtained by solving the LC subproblem.

\begin{definition}
Let $\alpha, \beta >0$ be given. The KKT system~\eqref{KKT} is said to satisfy the \emph{$(\alpha,\beta)$-monotonicity condition} at $(x,\lam)$ if the following hold:
\begin{enumerate}
    \item The matrix $\J_{x} F(x,\lambda)$ is nonsingular with $\| \J_x F(x,\lambda)^{-1} \| \le \alpha$;
    \item For any $u \in \R^m$,
    \begin{equation}\label{monotone-property}
         u^\top \J_{x} G(x)\J_{x} F(x,\lambda)^{-1}E(x) u
        \ge \beta \Vert \J_{x} G(x)^\top u \Vert^2.
    \end{equation}
\end{enumerate}
\end{definition}

\begin{remark}\label{rmk:equivalence-monotone}
The $(\alpha,\beta)$-monotonicity condition specializes to well-known conditions when the GNEP \eqref{GNEP} reduces to different cases. For example, if \eqref{GNEP} reduces to an NEP, then $\J_x G(x)^\top = E(x)$. In this case, condition (ii) is implied by the $\beta$-cocoercivity of $F(\cdot,\lam): \R^n \to \R^n$. 
If \eqref{GNEP} further reduces to an NLP problem, then the $(\alpha,\beta)$-monotonicity condition becomes
\begin{equation*}
    \frac{1}{\alpha} \Vert y \Vert^2 \le y^\top\grad^2_{x x}L(x,\lam) y  \le \frac{1}{\beta} \Vert y \Vert^2 \quad \text{for all}\ y\in \im (E(x)), 
\end{equation*}
and $\nabla_{xx}^2 L(x,\lam)$ is nonsingular.  This condition is slightly weaker than the classical global convergence assumptions for the early SQP method \cite{han1977globally} with exact Lagrangian Hessians, where the quasi-Newton matrix $H_k$ in \cite[Theorem 3.2]{han1977globally} is replaced by  $\grad^2_{x x}L(x,\lam)$ and $y$ ranges over the entire space.
\end{remark}


For notational convenience, we introduce several index sets at a given point $x\in \R^n$. For each player $\nu\in[N]$, define 
\begin{equation}
\begin{aligned}
J^\nu_+ := \left\{ i\in [m_\nu] \mid  g^\nu_i (x) >0 \right\},\\
    J^\nu_0 := \left\{ i\in [m_\nu] \mid g^\nu_i (x)=0  \right\},\\
    J^\nu_- := \left\{ i\in [m_\nu] \mid g^\nu_i (x)<0  \right\}.
\end{aligned}
\end{equation}
The next theorem shows that the merit function $\Phi_\rho$ decreases along the direction computed by solving the LC subproblem.

\begin{theorem}\label{thm:descent}
	Suppose that the KKT system \eqref{KKT} satisfies the
	$(\alpha,\beta)$-monotonicity condition at $(x,\lambda)\in\R^n\times \R^m_+$ for some $\alpha,\beta>0$. Let $(p,q)$ be a solution to 
    the LC subproblem~\eqref{KKT-linearized}. Then for any $\rho\ge \frac{2 \alpha^2}{\beta} +1 $, the directional derivative of $\Phi_\rho$ along $(p,q)$ at $(x,\lam)$ satisfies
    $$
        \Phi^\prime_\rho \bigl( (x,\lambda); (p, q) \bigr) \le -   \Phi_\rho ( x,\lambda) .
    $$
    Moreover, 
    $$
    \Phi^\prime_\rho \bigl( (x,\lambda); (p, q) \bigr) = 0\quad \text{if and only if}\quad \Phi_\rho ({x},{\lambda}) = 0.
    $$
\end{theorem}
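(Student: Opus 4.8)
The plan is to split the merit function as $\Phi_\rho = \phi_1 + \phi_2 + \phi_3$, where $\phi_1(x,\lam) = \left(-\lam^\T G(x)\right)_+$, $\phi_2(x,\lam) = \tfrac{\rho}{2}\Vert F(x,\lam)\Vert^2$, and $\phi_3(x) = \sum_{\nu\in[N]}\sum_{i\in[m_\nu]}\left(g^\nu_i(x)\right)_+$; each summand is directionally differentiable (the first and third as compositions of the convex function $(\cdot)_+$ with smooth maps), so $\Phi_\rho'$ is the sum of the three directional derivatives. The two ``outer'' pieces are handled directly from \eqref{KKT-linearized}. For $\phi_2$, the chain rule gives $\phi_2'\bigl((x,\lam);(p,q)\bigr) = \rho F(x,\lam)^\T\bigl(\J_x F(x,\lam)p + E(x)q\bigr)$, and the first equation of \eqref{KKT-linearized} makes the bracket equal $-F(x,\lam)$, so $\phi_2'\bigl((x,\lam);(p,q)\bigr) = -\rho\Vert F(x,\lam)\Vert^2 = -2\phi_2(x,\lam)$. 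For $\phi_3$, the directional derivative of $(g^\nu_i)_+$ along $p$ is $\grad_x g^\nu_i(x)^\T p$ on $J^\nu_+$, $\bigl(\grad_x g^\nu_i(x)^\T p\bigr)_+$ on $J^\nu_0$, and $0$ on $J^\nu_-$; since the second row of \eqref{KKT-linearized} forces $\grad_x g^\nu_i(x)^\T p \le -g^\nu_i(x)$ for every $i$, the $J^\nu_0$ terms vanish and $\phi_3'\bigl((x);p\bigr) \le \sum_\nu\sum_{i\in J^\nu_+}\bigl(-g^\nu_i(x)\bigr) = -\phi_3(x)$. Consequently $(\phi_2'+\phi_2)+(\phi_3'+\phi_3) \le -\tfrac{\rho}{2}\Vert F(x,\lam)\Vert^2$, and the first assertion reduces to the single estimate $\phi_1'\bigl((x,\lam);(p,q)\bigr) + \phi_1(x,\lam) \le \tfrac{\rho}{2}\Vert F(x,\lam)\Vert^2$.

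To handle $\phi_1$, put $t := -\lam^\T G(x)$, so $\phi_1 = (t)_+$ and $t'\bigl((x,\lam);(p,q)\bigr) = -\lam^\T\J_x G(x)p - q^\T G(x)$. The key manipulation is to insert the (componentwise, hence aggregate) complementarity relation $(\lam+q)^\T\bigl(G(x)+\J_x G(x)p\bigr)=0$ coming from \eqref{KKT-linearized}: expanding and substituting yields the identity $t' = -t + q^\T\J_x G(x)p$. A case distinction on the sign of $t$ then shows $\phi_1' + \phi_1 \le \bigl(q^\T\J_x G(x)p\bigr)_+$ in every case (it equals $q^\T\J_x G(x)p$ if $t>0$, equals $\bigl(q^\T\J_x G(x)p\bigr)_+$ if $t=0$, and is $0$ if $t<0$), so it only remains to bound $q^\T\J_x G(x)p$ above by $\tfrac{\rho}{2}\Vert F(x,\lam)\Vert^2$.

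This last bound is where the $(\alpha,\beta)$-monotonicity condition enters. By its first part $\J_x F(x,\lam)$ is invertible, so the first equation of \eqref{KKT-linearized} gives $p = -\J_x F(x,\lam)^{-1}\bigl(F(x,\lam) + E(x)q\bigr)$ and hence
\[
q^\T\J_x G(x)\,p = -\bigl(\J_x G(x)^\T q\bigr)^\T\J_x F(x,\lam)^{-1}F(x,\lam) - q^\T\J_x G(x)\,\J_x F(x,\lam)^{-1}E(x)\,q .
\]
Cauchy--Schwarz together with $\Vert\J_x F(x,\lam)^{-1}\Vert\le\alpha$ bounds the first term by $\alpha\Vert\J_x G(x)^\T q\Vert\,\Vert F(x,\lam)\Vert$, while \eqref{monotone-property} with $u=q$ shows the second term is $\le -\beta\Vert\J_x G(x)^\T q\Vert^2$; a Young inequality applied to $\alpha\Vert\J_x G(x)^\T q\Vert\,\Vert F(x,\lam)\Vert$ absorbs the part quadratic in $q$ and leaves $q^\T\J_x G(x)p \le \tfrac{\alpha^2}{4\beta}\Vert F(x,\lam)\Vert^2$. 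Any $\rho$ with $\tfrac{\rho}{2}\ge\tfrac{\alpha^2}{4\beta}$ — in particular $\rho\ge\tfrac{2\alpha^2}{\beta}+1$ — then closes the estimate and gives $\Phi_\rho'\bigl((x,\lam);(p,q)\bigr)\le-\Phi_\rho(x,\lam)$. I expect the main obstacle to be the algebraic identity $t' = -t + q^\T\J_x G(x)p$, i.e.\ getting the linearized complementarity relation to collapse $\phi_1' + \phi_1$ into the single controllable quantity $\bigl(q^\T\J_x G(x)p\bigr)_+$, together with keeping the three $\phi_1$ cases consistent; the choice of $\rho$ is then routine bookkeeping.

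For the equivalence, one direction is immediate from the inequality just proved: if $\Phi_\rho'\bigl((x,\lam);(p,q)\bigr)=0$ then $0\le\Phi_\rho(x,\lam)\le-\Phi_\rho'\bigl((x,\lam);(p,q)\bigr)=0$, so $\Phi_\rho(x,\lam)=0$. For the converse, assume $\Phi_\rho(x,\lam)=0$; since $\lam\in\R^m_+$, Proposition~\ref{prop:Phi = 0 implies KKT} yields $F(x,\lam)=0$, $G(x)\le 0$ and $\lam^\T G(x)=0$. Then $\phi_2'=0$, all sets $J^\nu_+$ are empty and the $J^\nu_0$ terms again vanish so $\phi_3'=0$, and $t=0$ together with $q^\T\J_x G(x)p\le-\beta\Vert\J_x G(x)^\T q\Vert^2\le 0$ forces $\phi_1' = \bigl(q^\T\J_x G(x)p\bigr)_+ = 0$; hence $\Phi_\rho'\bigl((x,\lam);(p,q)\bigr)=0$.
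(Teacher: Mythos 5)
Your proposal is correct and follows essentially the same route as the paper's proof: feasibility of $p$ handles the penalty terms, the aggregated linearized complementarity relation $(\lam+q)^\top\bigl(G(x)+\J_x G(x)p\bigr)=0$ collapses the $\bigl(-\lam^\top G(x)\bigr)_+$ piece (with the same case split on the sign of $\lam^\top G(x)$), $p$ is eliminated via the invertibility of $\J_x F(x,\lam)$, and the $(\alpha,\beta)$-monotonicity condition plus Young's inequality controls $q^\top\J_x G(x)p$, with the converse direction likewise resting on Proposition~\ref{prop:Phi = 0 implies KKT} and the sign of $-q^\top\J_x G(x)\J_x F(x,\lam)^{-1}E(x)q$. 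Your reorganization (reducing all three cases to $\phi_1'+\phi_1\le\bigl(q^\top\J_x G(x)p\bigr)_+$) is only a streamlining that incidentally shows a smaller threshold for $\rho$ would suffice; it is not a genuinely different argument.
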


\begin{proof}
Let $\phi(x,\lam) := \left(-\lam^\top G(x) \right)_+$. The directional derivative of $\Phi_\rho$ along $(p,q)$ at $(x,\lam)$ is
\begin{equation}\label{eq:Phi'}
    \begin{aligned}
    \Phi'_\rho \bigl( (x,\lam);(p,q) \bigr) & = \phi'\bigl((x,\lam);(p,q) \bigr) - \rho \Vert  F(x,\lam) \Vert^2 \\
    & \qquad \qquad +\sum\limits_{\nu\in[N]} \left( \sum\limits_{i\in J^\nu_0} \left(\J_x g^\nu_i (x) p \right)_+ + \sum\limits_{i\in J^\nu_+} \J_x g^\nu_i (x) p \right).
    \end{aligned}
\end{equation}
Since $p$ is feasible,
\begin{equation*}
    g^\nu_i (x) + \J_x g^\nu_i(x) p \le 0 \quad \text{for all}\ \nu\in[N], i\in [m_\nu].
\end{equation*}
Thus, 
\begin{equation}\label{JG<0}
    \left\{\begin{array}{ll}
           \J_x g^\nu_i (x)p \le 0 &\quad \text{if}\ \nu\in [N], i\in J^\nu_0; \\
          \J_x g^\nu_i (x)p \le -g^\nu_i(x)<0 &\quad \text{if}\ \nu\in [N], i\in J^\nu_+.
    \end{array}\right.
\end{equation}
To proceed, we distinguish three cases according to the value of $\lambda^\top G(x)$.

\textit{Case 1:} $\lam^\top G(x) <0$. In this case, 
\begin{equation}\label{eq:phi'-case-1}
\phi' \bigl((x,\lam);(p,q) \bigr) = -q^\top G(x) - \lam^\top \J_x G(x)p.
\end{equation}
Since $(p,q)$ solves the LC subproblem~\eqref{KKT-linearized}, 
\begin{equation}\label{complementarity-equation}
\begin{aligned}
           \lam^\top \J_x G(x)p & = - \lam^\top G(x) - q^\top \J_x G(x)p - q^\top G(x), \\
      p &= -\J_x F(x,\lam)^{-1}\left( F(x,\lam) + E(x)q  \right).
\end{aligned}
\end{equation}
Combining \eqref{eq:phi'-case-1} and \eqref{complementarity-equation}  yields 
\begin{equation}\label{phi-prime}
    \begin{aligned}
    \phi' \bigl( (x,\lam);(p,q) \bigr) & = \lam^\top G(x) + q^\top \J_x G(x) p \\
    & = \lam^\top G(x) - q^\top \J_x G(x)\J_x F(x,\lam)^{-1}\left( F(x,\lam) + E(x)q  \right). \\
    \end{aligned}
\end{equation}
Due to the $(\alpha,\beta)$-monotonicity condition, for any $\delta>0$,
\begin{equation}
    \begin{aligned}
    \phi' \bigl( (x,\lam);(p,q) \bigr)     &   \le \lam^\top G(x) -\beta \Vert \J_x G(x)^\top q \Vert^2 + \frac{\delta}{2}\Vert \J_x G(x)^\top q \Vert^2 \\
    & \qquad \qquad + \frac{1}{2\delta}\Vert \J_x F(x,\lam)^{-1} \Vert^2 \Vert F(x,\lam) \Vert^2\\
    & \le \lam^\top G(x) + \left( \frac{\delta}{2} - \beta \right) \Vert \J_x G(x)^\top q \Vert^2 + \frac{\alpha^2}{2 \delta} \Vert F(x,\lam) \Vert^2,
    \end{aligned}
\end{equation}
Choosing $\delta = \frac{\beta}{2}$ and $\rho \ge \frac{2\alpha^2}{\beta} + 1$, we can get by \eqref{eq:Phi'}, \eqref{JG<0} and \eqref{phi-prime} that
\begin{equation}\label{Phi-descent}
\begin{aligned}
   \Phi'_\rho \bigl( (x,\lam);(p,q) \bigr) & \le  \left( \frac{\delta}{2} - \beta \right)\Vert \J_x G(x)^\top q \Vert^2 + \left( \frac{\alpha^2}{ \beta} - \rho \right) \Vert F(x,\lam)\Vert^2 \\
   & \qquad \qquad \qquad \qquad  - \left(-\lam^\top G(x) \right)_+   - \sum\limits_{\nu \in [N] } \sum\limits_{i\in J^\nu_+} g^\nu_i (x) \\
   &\le -  \Phi_\rho (x,\lam).
\end{aligned}
\end{equation}

\textit{Case 2:} $\lam^\top G(x) = 0$. In this case,
\begin{equation*}
\phi' \bigl((x,\lam);(p,q) \bigr) = \left(-q^\top G(x) - \lam^\top \J_x G(x)p \right)_+.
\end{equation*}
If $q^\top G(x) + \lam^\top \J_x G(x)p\ge 0$, then $\phi' \bigl((x,\lam);(p,q) \bigr)\le 0$, which together with~\eqref{JG<0} implies 
$$
\Phi'_\rho \bigl( (x,\lam);(p,q) \bigr)\le -\rho \Vert F(x,\lam) \Vert^2  - \sum\limits_{\nu \in [N]} \sum\limits_{i\in J^\nu_+}  g^\nu_i (x)\le -  \Phi_\rho (x,\lam).
$$ 
Otherwise, the proof is analogous to \textit{Case 1}. 

\textit{Case 3:} $\lam^\top G(x)>0$.  Then $\phi' \bigl((x,\lam);(p,q) \bigr) = 0$, and the descent property follows immediately.

Finally, we verify that $\Phi'_\rho \bigl( (x,\lam);(p,q) \bigr) = 0$ if and only if the merit function $\Phi_\rho (x,\lam) = 0$. Suppose that $\Phi'_\rho \bigl( (x,\lam);(p,q) \bigr) = 0$. The previous analysis yields $\Phi_\rho (x,\lam) \le 0$. Since $\Phi_\rho (x,\lam) \ge 0$ by definition, we conclude $\Phi_\rho(x,\lam) = 0$. Conversely, suppose $\Phi_\rho (x,\lam) = 0$. Then Proposition~\ref{prop:Phi = 0 implies KKT} gives 
$$
F(x,\lam) = 0, \quad 0\le \lam \perp -G(x)\ge 0.
$$
Using \eqref{eq:Phi'}, \eqref{JG<0} and \eqref{phi-prime}, we have
\begin{equation}\label{ineq-2}
    \begin{aligned}
    \Phi'_\rho \bigl( (x,\lam);(p,q) \bigr) = \left(- q^\top \J_x G(x) \J_x F(x,\lam)^{-1} E(x) q \right)_+.\\
    \end{aligned}
\end{equation}
By the $(\alpha,\beta)$-monotonicity condition, the right-hand side of~\eqref{ineq-2} vanishes, and hence $\Phi'_\rho \bigl( (x,\lam);(p,q) \bigr) = 0$. This concludes the proof.
\end{proof}

\section{A sequential linear complementarity problem method}
\label{sec:alg}

In this section, we present the SLCP method. Its construction is based on the merit function~\eqref{def:merit function}, which provides both a descent criterion and a foundation for the global convergence analysis.

The section is organized as follows. 
In Subsection~\ref{subsec:framework}, we present the overall algorithmic framework and establish the finite
termination of the inexact line search procedure. 
Subsection~\ref{subsec:subproblem-solvability} discusses the solvability of the LC subproblems, which is essential for practical implementation. 

			
			

\subsection{Algorithmic framework}\label{subsec:framework}

We now present the basic algorithmic framework of the  SLCP method for GNEPs. The method follows a classical descent scheme based on the merit function $\Phi_\rho$, combined with an inexact line search to ensure global convergence.

\begin{algorithm}[H]
\caption{The basic algorithmic framework of SLCP for GNEPs}\label{algslcp}
\begin{algorithmic}
    \STATE{\textbf{Step 1:} Choose $x^{0}\in\mathbb{R}^n$, $\lambda^{0}\in\mathbb{R}^m_+$, $\rho>0$, 
           and parameters $0<\eta<1$, $0<\tau_0 \le 1$. \hspace*{3.6em} Set  $k := 0$.}
    \STATE \textbf{Step 2:} If the stopping criterion is satisfied, \textbf{stop}; otherwise, continue.
    \STATE\textbf{Step 3:} Solve the LC subproblem at $(x^k,\lambda^k)$ to obtain a direction $(p^k,q^k)$. Set $\tau^k = \tau_0$.

    \STATE \textbf{Step 4:} Carry out a line search. If
        \[
            \Phi_{\rho}(x^k + \tau^k p^k, \lambda^k + \tau^k q^k)
            \le  (1-\eta \tau^k )\Phi_{\rho}(x^k, \lambda^k),
        \]
    \hspace*{3.6em} then accept $\tau^k$; otherwise, set $\tau^k = \tau^k /2$ and repeat Step 4.
    \STATE \textbf{Step 5:} Update
        \[
            (x^{k+1}, \lambda^{k+1}) = (x^k,\lambda^k) + \tau^k (p^k, q^k).
        \]
        \hspace*{4em}Set $k = k+1$, and return to Step 2.
\end{algorithmic}
\end{algorithm}
	
Some comments are due. First, at each iteration $k \ge 0$, we have $\lam^k \in \R^m_+$. Since $(p^k,q^k)$ solves the LC subproblem~\eqref{KKT-linearized} at $(x^k,\lam^k)$, it follows that $\lam^k + q^k \in \R^m_+$. Hence, if $\lam^k \in \R^m_+$ and $ 0\le \tau^k \le 1$, then 
$$ 
\lam^{k+1} = \lam^k + \tau^k q^k \in \R^m_+.
$$
With the initial point $\lambda^0 \in \mathbb{R}^m_{+}$, the property holds for all iterations. 

Second, suppose that the $(\alpha,\beta)$-monotonicity condition holds at $(x^k,\lambda^k)$ for some $\alpha,\beta>0$. By Theorem~\ref{thm:descent}, either the computed direction is a sufficient descent direction or $\Phi_\rho(x^k,\lambda^k) = 0$. In the latter case, $(x^k,\lambda^k)$ is already a KKT pair of \eqref{GNEP} as $\lam^k \in \R^m_+$, and the algorithm terminates.

Our third comment concerns the $(\alpha,\beta)$-monotonicity condition. It guarantees both the finite termination of the inexact line search and the solvability of the LC subproblem. These results will be established in Lemma~\ref{lemma:linesearch terminate} and Theorem~\ref{thm:subproblem-solvability-1}, and are classical results for SQP methods in an NLP setting. As noted in Remark~\ref{rmk:equivalence-monotone}, when $\im(E(x)) = \R^n$, the $(\alpha,\beta)$-monotonicity condition aligns with the positive semidefiniteness of the Lagrangian Hessian in an NLP problem. Such positive semidefiniteness of a quadratic term ensures both the subproblem solvability and the finite termination of the inexact line search; see, e.g., \cite{burke2014sequential}. Hence, our approach extends the classical SQP method with exact Lagrangian Hessians to GNEPs.

The next lemma shows that, under the $(\alpha,\beta)$-monotonicity condition,
the line search procedure in Step~4 of Algorithm~\ref{algslcp} terminates finitely, which ensures the well-posedness of the algorithm.

\begin{lemma}\label{lemma:linesearch terminate}
    Suppose that the KKT system~\eqref{KKT} satisfies the $(\alpha,\beta)$-monotonicity condition at $(x^k,\lam^k)$ for some $\alpha,\beta >0$. Let $\rho \ge \frac{2\alpha^2}{\beta} + 1$. Then the line search procedure in Step 4 of Algorithm~\ref{algslcp} terminates finitely. 
\end{lemma}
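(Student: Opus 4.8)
The plan is to show that the Armijo-type line search in Step~4 accepts some step size $\tau^k = \tau_0 / 2^j$ after finitely many bisections. If $\Phi_\rho(x^k,\lambda^k) = 0$ then $(x^k,\lambda^k)$ is a KKT pair and, by Theorem~\ref{thm:descent} together with the sign argument in its proof, $\tau^k = \tau_0$ is accepted immediately (both sides of the test are zero). So assume $\Phi_\rho(x^k,\lambda^k) > 0$. The key observation is that $\Phi_\rho$ is the sum of a smooth term $\tfrac{\rho}{2}\|F\|^2$ and piecewise-smooth terms of the form $(\cdot)_+$ composed with smooth functions; hence $\Phi_\rho$ is directionally differentiable everywhere, and in fact locally Lipschitz and \emph{semismooth}, so a first-order Taylor-type expansion along the ray $\tau \mapsto (x^k,\lambda^k) + \tau(p^k,q^k)$ holds:
\begin{equation}
\Phi_\rho\bigl((x^k,\lambda^k) + \tau(p^k,q^k)\bigr) = \Phi_\rho(x^k,\lambda^k) + \tau\,\Phi'_\rho\bigl((x^k,\lambda^k);(p^k,q^k)\bigr) + o(\tau)
\end{equation}
as $\tau \downarrow 0$.

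First I would invoke Theorem~\ref{thm:descent}: since the $(\alpha,\beta)$-monotonicity condition holds at $(x^k,\lambda^k)$ and $\rho \ge \tfrac{2\alpha^2}{\beta}+1$, the solution $(p^k,q^k)$ of the LC subproblem satisfies $\Phi'_\rho\bigl((x^k,\lambda^k);(p^k,q^k)\bigr) \le -\Phi_\rho(x^k,\lambda^k) < 0$. Combining this with the expansion above gives, for all sufficiently small $\tau > 0$,
\begin{equation}
\Phi_\rho\bigl((x^k,\lambda^k) + \tau(p^k,q^k)\bigr) \le \Phi_\rho(x^k,\lambda^k) - \tau\,\Phi_\rho(x^k,\lambda^k) + o(\tau) = (1-\tau)\Phi_\rho(x^k,\lambda^k) + o(\tau).
\end{equation}
Since $0 < \eta < 1$, the term $(1-\eta)\tau\,\Phi_\rho(x^k,\lambda^k) > 0$ eventually dominates $o(\tau)$, so there exists $\bar\tau > 0$ with $\Phi_\rho\bigl((x^k,\lambda^k)+\tau(p^k,q^k)\bigr) \le (1-\eta\tau)\Phi_\rho(x^k,\lambda^k)$ for all $\tau \in (0,\bar\tau]$. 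Because Step~4 halves $\tau^k$ from $\tau_0$, after at most $\lceil \log_2(\tau_0/\bar\tau)\rceil$ bisections we reach $\tau^k \le \bar\tau$, at which point the acceptance test passes; hence the line search terminates finitely.

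The main obstacle — and the only step that needs care — is justifying the first-order expansion with $o(\tau)$ remainder, i.e.\ that the one-sided directional derivative of $\Phi_\rho$ genuinely governs its behavior along the ray. This is where directional differentiability alone is not quite enough in general; one needs that $\tau \mapsto \Phi_\rho((x^k,\lambda^k)+\tau(p^k,q^k))$ is right-differentiable at $0$ with derivative $\Phi'_\rho((x^k,\lambda^k);(p^k,q^k))$, which does hold here because each constituent function ($\|F(\cdot)\|^2$, the $(g^\nu_i(\cdot))_+$, and $(-\lambda^\top G(\cdot))_+$) is a composition of a $C^1$ map with a convex piecewise-linear function and such compositions are B-differentiable (Fréchet directionally differentiable), so the chain rule for directional derivatives applies and the scalar function of $\tau$ is a sum of such B-differentiable functions. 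I would cite the relevant B-differentiability/chain-rule facts (e.g.\ \cite[Chapter 4]{cui2021modern}, consistent with the computation of $\Phi'_\rho$ already used in the proof of Theorem~\ref{thm:descent}) and then the argument above closes the proof.
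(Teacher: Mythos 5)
Your proposal is correct and is essentially the paper's argument in direct rather than contrapositive form: both hinge on Theorem~\ref{thm:descent} giving $\Phi'_\rho\bigl((x^k,\lambda^k);(p^k,q^k)\bigr)\le -\Phi_\rho(x^k,\lambda^k)$ and on the definition of the one-sided directional derivative along the fixed ray (which, for a fixed direction, is exactly the $o(\tau)$ expansion you derive, so the B-differentiability discussion is more care than is strictly needed). The only wrinkle is your claim that when $\Phi_\rho(x^k,\lambda^k)=0$ the step $\tau_0$ is accepted because ``both sides are zero'' --- the left-hand side need not vanish at the trial point --- but this case never reaches Step~4 since the algorithm stops in Step~3, so the lemma is unaffected.
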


\begin{proof}
We proceed by contradiction. Assume that there exists a sequence $\{ \tau^k_i \}$ with $\lim_{i\rightarrow \infty} \tau^k_i =0$ such that 
$$
\Phi_\rho (x^k + \tau^k_i p^k, \lam^k + \tau^k_i q^k) > (1-\eta \tau^k_i) \Phi_\rho (x^k,\lam^k), \quad \forall\, i\ge 0.
$$
Then it follows from the definition of directional derivative that 
\begin{equation}\label{Phi'-one-side}
\begin{aligned}
    \Phi^{'}_{\rho} \bigl( (x^k,\lam^k); (p^k,q^k) \bigr)    & = \lim\limits_{ i\rightarrow \infty }\frac{\Phi_{\rho}(x^k + \tau^k_i p^k, \lambda^k + \tau^k_i q^k) - \Phi_\rho (x^k,\lam^k)}{\tau^k_i}\\
    & \ge -\eta \Phi_{\rho}(x^k, \lambda^k).
\end{aligned}
\end{equation}
By Theorem~\ref{thm:descent}, we have
\begin{equation*}
    \Phi^{'}_{\rho} \bigl( (x^k,\lam^k); (p^k,q^k) \bigr) \le - \Phi_\rho (x^k,\lam^k) < 0,
\end{equation*}
which combined with \eqref{Phi'-one-side} yields that $\eta \ge 1$. This contradicts the choice of $\eta \in (0,1)$.
\end{proof}

\subsection{Solvability of the LC subproblem}\label{subsec:subproblem-solvability}

We next address the solvability of the LC subproblem. The $(\alpha,\beta)$-monotonicity condition plays a role analogous to the positive semidefiniteness of the quadratic term in SQP methods and guarantees that the LC subproblem admits a solution.

\begin{theorem}\label{thm:subproblem-solvability-1}
		Suppose that the following conditions hold:
		\begin{enumerate}
			\item The KKT system~\eqref{KKT} satisfies the $(\alpha,\beta)$-monotonicity condition at $(x,\lambda)$ for some $\alpha,\beta>0$;
			\item The linearized constraints are feasible, i.e., there exists a vector $d\in \R^n$ such that
			\begin{equation*}
				g^\nu_i (x) + \J_x g^\nu_i (x)d \le 0 \quad \text{for all}\ i\in[m_\nu], \nu\in [N].
			\end{equation*} 
		\end{enumerate}
		Then the LC subproblem~\eqref{KKT-linearized} at $(x,\lam)$ admits a solution.
	\end{theorem}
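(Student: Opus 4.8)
The plan is to use the nonsingularity of $\J_x F(x,\lambda)$, which is part of the $(\alpha,\beta)$-monotonicity condition, to eliminate $p$ and recast the LC subproblem~\eqref{KKT-linearized} as an ordinary linear complementarity problem, and then to verify the hypotheses of Lemma~\ref{lemma:LCP-solvability}. From the first line of \eqref{KKT-linearized} we have $p = -\J_x F(x,\lambda)^{-1}\bigl(F(x,\lambda)+E(x)q\bigr)$; substituting this into $-G(x)-\J_x G(x)p$ and setting $z:=\lambda+q$ shows that \eqref{KKT-linearized} at $(x,\lambda)$ is equivalent to ${\rm LCP}(h,M)$ with
\[
M:=\J_x G(x)\,\J_x F(x,\lambda)^{-1}E(x),\qquad h:=-M\lambda+\J_x G(x)\,\J_x F(x,\lambda)^{-1}F(x,\lambda)-G(x),
\]
in the sense that every $z\in{\rm SOL}(h,M)$ produces a solution $(p,q)$ of \eqref{KKT-linearized} via $q=z-\lambda$ and the formula above for $p$ (and conversely). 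Hence it suffices to show ${\rm LCP}(h,M)$ is solvable.

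First I would check positive semidefiniteness of $M$: by inequality~\eqref{monotone-property}, $u^\top M u\ge\beta\Vert\J_x G(x)^\top u\Vert^2\ge0$ for all $u\in\R^m$. Then I would verify the implication required by Lemma~\ref{lemma:LCP-solvability}. Let $u\ge0$ satisfy $Mu\ge0$ and $u^\top M u=0$. Combining $u^\top M u=0$ with \eqref{monotone-property} and $\beta>0$ gives $\J_x G(x)^\top u=0$. The crucial structural point is that, since $M=\J_x G(x)\bigl(\J_x F(x,\lambda)^{-1}E(x)\bigr)$, the identity $\J_x G(x)^\top u=0$ yields not merely $u^\top M u=0$ but $u^\top M=0$; consequently $u^\top M\lambda=0$ and $u^\top\J_x G(x)\J_x F(x,\lambda)^{-1}F(x,\lambda)=0$, so that $u^\top h=-u^\top G(x)$.

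Finally I would bring in the second hypothesis: pick $d\in\R^n$ with $G(x)+\J_x G(x)d\le0$. For $u\ge0$ this gives $u^\top G(x)\le-u^\top\J_x G(x)d=-(\J_x G(x)^\top u)^\top d=0$ because $\J_x G(x)^\top u=0$; hence $u^\top h=-u^\top G(x)\ge0$. The implication of Lemma~\ref{lemma:LCP-solvability} therefore holds, ${\rm LCP}(h,M)$ is solvable, and translating a solution $z$ back through $q=z-\lambda$ and $p=-\J_x F(x,\lambda)^{-1}\bigl(F(x,\lambda)+E(x)q\bigr)$ produces a solution of \eqref{KKT-linearized}. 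I expect the only place requiring care to be the reduction in the first step—making the correspondence between \eqref{KKT-linearized} and ${\rm LCP}(h,M)$ exact—while the verification of the two hypotheses of Lemma~\ref{lemma:LCP-solvability} is then essentially mechanical, driven by the observation that $\J_x G(x)^\top u=0$ collapses every term of $u^\top h$ except $-u^\top G(x)$, which the linearized-feasibility vector $d$ controls.
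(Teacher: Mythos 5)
Your proposal is correct and follows essentially the same route as the paper's proof: eliminate $p$ via the nonsingularity of $\J_x F(x,\lambda)$, reduce \eqref{KKT-linearized} to ${\rm LCP}(h,M)$ in the variable $\tlam=\lambda+q$ with $M=\J_x G(x)\J_x F(x,\lambda)^{-1}E(x)$, and verify the hypothesis of Lemma~\ref{lemma:LCP-solvability} by deducing $\J_x G(x)^\top u=0$ from \eqref{monotone-property} and then controlling $u^\top h=-u^\top G(x)$ with the feasibility vector $d$. The details, including the observation that $\J_x G(x)^\top u=0$ annihilates every term of $u^\top h$ except $-u^\top G(x)$, match the paper's argument.
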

	
	\begin{proof}
	Since $\J_x F(x,\lam)$ is invertible, the LC subproblem~\eqref{KKT-linearized} is equivalent to the following system:
	\begin{equation}\label{mLCP}
	    \begin{array}{c}
	          0\le -G(x) + \J_x G(x) \J_x F(x,\lam)^{-1} (E(x) q +F(x,\lam)) \perp \lam + q \ge 0,\\
	          p = -\J_x F(x,\lam)^{-1}\left(E(x)q+F(x,\lam)\right).
	    \end{array}
	\end{equation}
	Let $\tlam:= \lam + q$. The LCP \eqref{mLCP} is equivalent to  
	\begin{equation}\label{LCP:thm-solvability}
	\begin{aligned}
	    &0\le -G(x) - \J_x G(x) \J_x F(x,\lam)^{-1}\left(E(x)\lam- F(x,\lam)\right)\\
	    &\qquad\qquad\qquad\qquad\qquad \qquad\qquad + \J_x G(x) \J_x F(x,\lam)^{-1}E(x)\tlam \perp \tlam \ge 0.
	\end{aligned}
	\end{equation}
	By the $(\alpha,\beta)$-monotonicity condition, the matrix $\J_x G(x) \J_x F(x,\lam)^{-1}E(x)$ is positive semidefinite. For any $u\in \R^m_+$ satisfying 
	\begin{equation*}
	\left\{\begin{array}{ll}
	     \J_x G(x) \J_x F(x,\lam)^{-1}E(x)u\ge 0,\\
	     u^\top\J_x G(x) \J_x F(x,\lam)^{-1}E(x)u=0,
	\end{array}\right.
    \end{equation*}
	we have
	\begin{equation*}
	    0 = u^\top\J_x G(x) \J_x F(x,\lam)^{-1}E(x)u \ge \beta \Vert \J_x G(x)^\top u \Vert^2. 
	\end{equation*}
	Consequently, $  \J_x G(x)^\top u  = 0$ and
	\begin{equation*}
	    \begin{aligned}
	        & u^\top \left( -G(x) - \J_x G(x) \J_x F(x,\lam)^{-1}\left(E(x)\lam-F(x,\lam)\right) \right)
	        =  -u^\top G(x).
	    \end{aligned}
	\end{equation*}
	By condition $(ii)$, there exists a vector $d\in \R^n$ such that 
	\begin{equation*}
	    G(x) + \J_x G(x)d \le 0.
	\end{equation*}
	Using $u \ge 0$, we obtain
	\begin{equation*}
	    -u^\top G(x) = u^\top(-G(x) - \J_x G(x)d) \ge 0.
	\end{equation*}
	Therefore, the LCP \eqref{LCP:thm-solvability} admits a solution $\tlam^*$  by Lemma~\ref{lemma:LCP-solvability}. Let 
	\begin{equation*}
	    \begin{aligned}
	           q^* &= \tlam^* - \lam, \\
	           p^* &= -\J_x F(x,\lam)^{-1}\left(E(x)q^*+F(x,\lam)\right).
	    \end{aligned}
	\end{equation*}
	The pair $(p^*,q^*)$ solves the LC subproblem~\eqref{KKT-linearized}.
	\end{proof}


It should be noted that some GNEPs may not satisfy the $(\alpha,\beta)$-monotonicity condition.
In such cases, the solvability of the LC subproblems cannot be guaranteed by Theorem~\ref{thm:subproblem-solvability-1}.
Nevertheless, these GNEPs are often player-convex, and can be reformulated as penalized NEPs sharing the same equilibria. This penalization technique has been widely used in \cite{ba2022exact}. Moreover, existence results for player-convex NEPs are well established and typically follow from fixed-point theorems such as Kakutani's fixed point theorem. Interested readers are referred to \cite[Section~11]{palomar2010convex} for a comprehensive discussion. 

In the following analysis, we extend the results of \cite{ba2022exact} by considering a more general setting where the private constraint sets of individual players may be unbounded.
To address this issue, we introduce restricted versions of these constraints so that each player's private feasible region becomes bounded, and show that the modified formulation is equivalent to the original one in most practical cases.
Based on this reformulation and the penalization framework, we further establish the solvability of the LC subproblems for GNEPs that do not satisfy the $(\alpha,\beta)$-monotonicity condition.


For a given point $x$ and any $\nu \in [N]$, we partition the index set $[m_\nu]$ into two disjoint subsets $I^{\nu}_{\pr}$ and $I^{\nu}_{\np}$ such that $[m_\nu] = I^{\nu}_{\pr} \cup I^{\nu}_{\np}$. The set $I^{\nu}_{\pr}$ corresponds to the constraints that depend only on the private strategy $x^\nu$, while $I^{\nu}_{\np}$ collects those that are affected by other players' strategies. For simplicity, we write $g^\nu_i (x^\nu,x^{-\nu})$ by $g^\nu_i(x^\nu)$ for all $i\in I^{\nu}_{\pr}$ and $\nu\in[N]$.
The private strategy set $C^\nu \subset \R^{n_\nu}$ of the GNE subproblem~\eqref{subproblem} at $x$ is defined as
\begin{equation}
    C^\nu := \left\{ p^\nu \mid g^\nu_i (x^\nu) + \J_{x^\nu} g^\nu_i (x^\nu) p^\nu \le 0\ \text{for all}\ i\in I^{\nu}_{\pr}  \right\}.
\end{equation}
The set of all feasible strategies for player $\nu$ is defined by
\begin{equation}
     D^\nu:= \left\{ p\in \R^n    \mid g^\nu_i (x) + \J_x g^\nu_i (x) p \le 0\ \text{for all}\ i\in I^{\nu}_{\np} \right\}.
 \end{equation}
Accordingly, let $C^{-\nu}:= \Proj_{\mu \in [N]\setminus\{\nu\}} C^{\mu}$. Define $\hC^\nu$ by the restricted private strategy set
\begin{equation}\label{def-hC}
    \hC^\nu:= 
     C^\nu \bigcap \left\{ p^\nu \in \R^{n_\nu} \middle\vert  
    \begin{array}{ll}
          \exists\, p^{-\nu} \in C^{-\nu}\ \text{such that}\ p= (p^\nu,p^{-\nu}), {\rm and} \\
          g^\nu_i (x) + \J_x g^\nu_i (x) p \le 0\ \text{for all}\ i\in I^{\nu}_{\np}
    \end{array}   
     \right\}.
\end{equation}

\begin{lemma}
    The restricted private strategy set $\hC^\nu$ is a polyhedron for any $\nu \in [N]$. 
\end{lemma}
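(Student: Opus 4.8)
The plan is to show that $\hC^\nu$ is a finite intersection of closed half-spaces of $\R^{n_\nu}$, which by definition makes it a polyhedron. The key observation is that $x$ is a \emph{fixed} point throughout this construction, so every quantity of the form $g^\nu_i(x)$, $\J_x g^\nu_i(x)$, $\J_{x^\nu} g^\nu_i(x^\nu)$, etc., is a constant vector or matrix, and the only free variable is $p^\nu \in \R^{n_\nu}$ (together with auxiliary variables $p^{-\nu}$ that will be eliminated). The set $C^\nu$ is immediately a polyhedron: it is defined by the finitely many affine inequalities $g^\nu_i(x^\nu) + \J_{x^\nu} g^\nu_i(x^\nu) p^\nu \le 0$ for $i \in I^\nu_{\pr}$, each of which is linear in $p^\nu$. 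Likewise $C^{-\nu} = \Pi_{\mu \neq \nu} C^\mu$ is a polyhedron in $\R^{n - n_\nu}$ as a product of polyhedra.

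First I would rewrite the second set in the intersection defining $\hC^\nu$ — call it $D^\nu$ — as the projection onto the $p^\nu$-coordinates of a polyhedron in $\R^n$. Concretely, for $i \in I^\nu_{\np}$ the constraint $g^\nu_i(x) + \J_x g^\nu_i(x) p \le 0$ is affine in the full vector $p = (p^\nu, p^{-\nu})$, and the requirement $p^{-\nu} \in C^{-\nu}$ is a conjunction of affine inequalities in $p^{-\nu}$. Hence the set
$$
P^\nu := \left\{ (p^\nu, p^{-\nu}) \in \R^n \;\middle|\; p^{-\nu} \in C^{-\nu},\ g^\nu_i(x) + \J_x g^\nu_i(x)(p^\nu,p^{-\nu}) \le 0\ \text{for all}\ i \in I^\nu_{\np} \right\}
$$
is a polyhedron in $\R^n$, and $D^\nu$ is exactly its image under the linear projection $(p^\nu, p^{-\nu}) \mapsto p^\nu$. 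By the Fourier--Motzkin elimination theorem (equivalently, the fact that the linear image of a polyhedron is a polyhedron, see \cite{rockafellar2009variational}), $D^\nu$ is a polyhedron. Then $\hC^\nu = C^\nu \cap D^\nu$ is an intersection of two polyhedra, hence a polyhedron, which is the claim.

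The one point that deserves care — and is the only place where anything beyond bookkeeping is needed — is the projection step: it is genuinely true that the coordinate projection of a polyhedron is again a polyhedron, but this is \emph{not} true for general convex sets, so I would invoke Fourier--Motzkin elimination (or cite a standard reference) rather than treat it as obvious. Everything else is routine: each defining relation of $C^\nu$, $C^{-\nu}$, and the $I^\nu_{\np}$-block is manifestly affine in the appropriate variables once one remembers that $x$ is frozen. I would also remark that the conclusion holds even when some individual players' constraint sets $C^\mu$ are unbounded, since polyhedrality does not require boundedness; the boundedness of $\hC^\nu$ that is exploited later in the penalization argument is a separate matter handled under additional hypotheses. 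Thus the proof is short, and the main (minor) obstacle is simply to state the projection fact correctly.
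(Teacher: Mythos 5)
Your proof is correct and follows essentially the same route as the paper: both express the second set in the definition of $\hC^\nu$ as the coordinate projection of a polyhedron in $\R^n$ (your $P^\nu$ versus the paper's $D^\nu\cap C$), invoke the fact that the linear image of a polyhedron is a polyhedron (the paper cites \cite[Theorem 19.3]{rockafellar1997convex}; Fourier--Motzkin elimination is an equivalent justification), and then intersect with $C^\nu$. Your explicit remark that this projection step is the only nontrivial point, and that it fails for general convex sets, is exactly the right emphasis.
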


\begin{proof}
Let $C = \prod_{\nu\in[N]} C^\nu$, and let $\Proj_{\R^{n_\nu}}: \R^n \rightarrow \R^{n_\nu}$ denote the canonical projection satisfying $\Proj_{\R^{n_\nu}}(p) = p^\nu$ for any $p\in \R^n$. The restricted private strategy set $\hC^\nu$ can be rewritten as
 $$
    \hC^\nu = \Proj_{\R^{n_\nu}}( D^\nu \cap C ) \cap C^\nu.
 $$
 Since both $D^\nu$ and $C$ are polyhedra, their intersection $D^\nu \cap C$ is a polyhedron. Moreover, the image of a polyhedron under a linear operator is also a polyhedron (see \cite[Theorem 19.3]{rockafellar1970convex}). Hence, $\hC^\nu$ is a polyhedron, which completes the proof.
\end{proof}

This result is important in practice. In GNEPs, each player's private strategy set is commonly unbounded, preventing the direct application of classical fixed point theorems that rely on compactness. However, the non-private constraints provide an advantage, which ensures that the restricted strategy set $\hC^\nu$ is actually compact. For example, $\hC^\nu$ is compact at any $x\in \R^n_{++}$ for the GNE subproblem~\eqref{subproblem}
in Example~\ref{exm-internet}.

We now consider the restricted GNE subproblem at a given point $(x,\lam)$, where each player $\nu$ solves the quadratic programming problem 
\begin{equation}\label{subproblem-restricted}
     \begin{aligned}
         \min\limits_{p^\nu}\ \ & \grad_{x^\nu} \theta^\nu (x)^\top p^\nu + \frac{1}{2}(p^\nu)^\top \grad_{x^\nu x^\nu} L^\nu(x,\lam^\nu)p^\nu + (p^\nu)^\top \grad_{x^\nu x^{-\nu}}L^\nu (x,\lam^\nu)p^{-\nu}  \\
         {\rm s. t.}\ \ \ \ & 
         p^\nu \in \hC^\nu,\\
         & g_i^\nu (x) + \J_{x} g^\nu_i (x) p \le 0 \quad {\rm for}\ {\rm all}\ i\in I^{\nu}_{\np}.\\
    \end{aligned}
\end{equation}
It is worth noting that the restricted GNE subproblem and the original GNE subproblem share the same Nash equilibria.

\begin{theorem}\label{thm-equivalence-restricted-subproblem}
 The GNE subproblem~\eqref{subproblem} and the restricted GNE subproblem~\eqref{subproblem-restricted} admit the same set of Nash equilibria at any $(x,\lambda)$.
\end{theorem}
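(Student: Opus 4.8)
The plan is to show that the two games have the same Nash equilibrium set by proving two inclusions. The key observation is that the restricted subproblem \eqref{subproblem-restricted} differs from the original subproblem \eqref{subproblem} only in that player $\nu$'s private feasible region has been intersected with the extra polyhedral constraint defining $\hC^\nu$ in \eqref{def-hC}; this extra constraint is built precisely from the non-private constraints $g^\nu_i(x) + \J_x g^\nu_i(x)p \le 0$, $i\in I^\nu_{\np}$, which already appear as coupling constraints in \emph{both} formulations. So the feasible set of player $\nu$ in the restricted problem is a subset of the feasible set of player $\nu$ in the original problem, and the restricted feasible set is obtained from the original by adding only constraints that are automatically implied once the other players' strategies are fixed at a point lying in $C^{-\nu}$.

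First I would fix a point $(x,\lambda)$ and let $\bar p = (\bar p^1,\dots,\bar p^N)$ be a Nash equilibrium of the original GNE subproblem \eqref{subproblem}. For each $\nu$, feasibility of $\bar p$ for player $\nu$'s original problem means $\bar p^\nu \in C^\nu$ and $g^\nu_i(x) + \J_x g^\nu_i(x)\bar p \le 0$ for all $i\in I^\nu_{\np}$. Since the other components $\bar p^{-\nu}$ satisfy their own private constraints, $\bar p^{-\nu}\in C^{-\nu}$; combining this with $\bar p^\nu \in C^\nu$ and the non-private inequalities gives, by the very definition \eqref{def-hC}, that $\bar p^\nu \in \hC^\nu$. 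Hence $\bar p$ is feasible for player $\nu$'s restricted problem, and since the restricted feasible set is contained in the original one while the objective is unchanged, the minimality of $\bar p^\nu$ over the larger set forces minimality over the smaller set as well. Therefore $\bar p$ is a Nash equilibrium of the restricted subproblem \eqref{subproblem-restricted}.

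For the converse, let $\bar p$ be a Nash equilibrium of the restricted subproblem. Here the point to be careful about is that a priori a player could, by deviating, leave $\hC^\nu$ and reach a point of $C^\nu$ that is still feasible for the original problem yet gives a strictly smaller objective. The resolution is to show that this cannot happen: I would argue that for any feasible deviation $p^\nu \in C^\nu$ of player $\nu$ in the original subproblem — i.e. any $p^\nu$ with $g^\nu_i(x)+\J_x g^\nu_i(x)(p^\nu,\bar p^{-\nu})\le 0$ for all $i\in I^\nu_{\np}$ — the candidate $p^\nu$ in fact already belongs to $\hC^\nu$, because $\bar p^{-\nu}\in C^{-\nu}$ witnesses the existential clause in \eqref{def-hC}. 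Thus every original-feasible deviation of player $\nu$ against $\bar p^{-\nu}$ is also restricted-feasible, so the optimality of $\bar p^\nu$ in the restricted problem already implies optimality in the original problem. Since this holds for every $\nu$, $\bar p$ is a Nash equilibrium of \eqref{subproblem}.

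The main (though modest) obstacle is the converse direction, and specifically making precise that the definition of $\hC^\nu$ in \eqref{def-hC} is exactly tuned so that a deviation of player $\nu$ which keeps the non-private constraints satisfied against the \emph{current} $\bar p^{-\nu}$ is automatically an element of $\hC^\nu$. This hinges on the fact that at a Nash equilibrium the remaining players' strategies $\bar p^{-\nu}$ lie in $C^{-\nu}$, which supplies the required $p^{-\nu}$ in the existential condition; I would state this explicitly as the crux of the argument. Everything else is a routine comparison of feasible sets with an unchanged objective, so no real computation is involved.
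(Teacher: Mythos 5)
Your proof is correct and follows essentially the same route as the paper: both directions hinge on the observation that $\bar p^{-\nu}\in C^{-\nu}$ supplies the witness in the existential clause of \eqref{def-hC}, so any original-feasible deviation of player $\nu$ against $\bar p^{-\nu}$ already lies in $\hC^\nu$. The paper states the converse direction by contradiction rather than directly, but the argument is the same.
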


\begin{proof}
Since $\hC^\nu \subset C^\nu$ for all $\nu \in [N]$, it suffices to show that every GNE $\bp$ of~\eqref{subproblem-restricted} is also a GNE of~\eqref{subproblem}. We proceed by contradiction. Assume that $\bp$ is not a GNE of~\eqref{subproblem}. Then there exist $\hnu \in [N]$ and $p^{\hnu} \in C^{\hnu}$ such that
\begin{equation}\label{thm-restricted-1}
    g_i^{\hnu} (x) + \J_{x^{\hnu}} g^{\hnu}_i (x) p^{\hnu} + \J_{x^{-\hnu}} g^{\hnu}_i (x) \bp^{-\hnu} \le 0 \quad {\rm for}\ {\rm all}\ i\in I^{\hnu}_{\np},
\end{equation}
and
$$
\begin{aligned}
     & \grad_{x^{\hnu}} \theta^{\hnu}(x)^\top p^{\hnu} + \frac{1}{2}(p^{\hnu})^\top \grad_{x^{\hnu} x^{\hnu}} L^{\hnu}(x,\lam^{\hnu})p^{\hnu} + (p^{\hnu})^\top \grad_{x^{\hnu} x^{-{\hnu}}}L^{\hnu} (x,\lam^{\hnu})\bp^{-{\hnu}} \\
      < & \grad_{x^{\hnu}} \theta^{\hnu} (x)^\top \bp^{\hnu} + \frac{1}{2}(\bp^{\hnu})^\top \grad_{x^{\hnu} x^{\hnu}} L^{\hnu}(x,\lam^{\hnu})\bp^{\hnu} + (\bp^{\hnu})^\top \grad_{x^{\hnu} x^{-{\hnu}}}L^{\hnu} (x,\lam^{\hnu})\bp^{-{\hnu}}.
\end{aligned}
$$
However, since $\bp^{-\hnu} \in \hC^{-\hnu} \subset C^{-\hnu}$, it follows from \eqref{def-hC} and \eqref{thm-restricted-1} that $p^{\hnu} \in \hC^{\hnu}$. This contradicts the fact that $\bp^\nu$ is a global minimizer of the restricted subproblem~\eqref{subproblem-restricted}.
\end{proof}

Using the equivalence between the GNE subproblem and the restricted one, we now establish the existence of a solution to the LC subproblem.
We first consider the case when the GNEP is jointly-convex.
\begin{theorem}\label{thm-subproblem-solvability-jointly}
		Suppose that the following conditions hold:
		\begin{enumerate}
			\item The set $\hC^\nu$ is nonempty and compact for all $\nu \in [N]$;
			\item The GNEP \eqref{GNEP} is jointly-convex.
		\end{enumerate}
		Then the LC subproblem~\eqref{KKT-linearized} at $(x,\lambda)$ admits at least one solution.
\end{theorem}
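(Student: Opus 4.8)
The plan is to reduce the claim, via Theorem~\ref{thm-equivalence-restricted-subproblem}, to the existence of a Nash equilibrium of the restricted GNE subproblem~\eqref{subproblem-restricted}, and then to produce one from a variational inequality on a compact set. First recall that the algorithm maintains $\lam\in\R^m_+$ (see the comments following Algorithm~\ref{algslcp}); together with joint convexity this yields $\grad_{x^\nu x^\nu}L^\nu(x,\lam^\nu)\succeq 0$ for every $\nu$, so each player's quadratic objective in~\eqref{subproblem} and in~\eqref{subproblem-restricted} is convex in its own variable $p^\nu$, and the linearized constraints $g^\nu_i(x)+\J_x g^\nu_i(x)p\le 0$ form one and the same affine system for all players (each such constraint is either genuinely coupled, hence identical in all players' problems, or private to a single player, hence absorbed in that player's set $C^\nu\supseteq\hC^\nu$). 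Since~\eqref{GNEP} is player-convex, any Nash equilibrium $\bp$ of~\eqref{subproblem} admits a multiplier $\blam$ with $(\bp,\blam-\lam)$ solving the LC subproblem~\eqref{KKT-linearized}, as recorded after~\eqref{subproblem}; by Theorem~\ref{thm-equivalence-restricted-subproblem} a Nash equilibrium of~\eqref{subproblem-restricted} is also one of~\eqref{subproblem}, so it suffices to find a Nash equilibrium of~\eqref{subproblem-restricted}.

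Next I would record that the joint feasible region of~\eqref{subproblem-restricted},
\[
  K:=\Bigl\{\, p\in\prod_{\nu\in[N]}\hC^\nu \ \Big|\ g^\nu_i(x)+\J_x g^\nu_i(x)p\le 0\ \text{for all } i\in I^\nu_{\np},\ \nu\in[N]\,\Bigr\},
\]
is nonempty, compact, and convex. Convexity is immediate; $K$ is closed in the compact set $\prod_{\nu}\hC^\nu$, hence compact; and nonemptiness follows by unwinding the definition~\eqref{def-hC} of $\hC^\nu$ together with the coupled/private dichotomy of the constraints noted above.

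The core step is to obtain a Nash equilibrium of~\eqref{subproblem-restricted}. Let $\psi^\nu(p^\nu;p^{-\nu})$ denote player $\nu$'s quadratic objective in~\eqref{subproblem-restricted}, and let $\Theta:\R^n\to\R^n$ be the game (pseudo-)gradient whose $\nu$-th block is $\grad_{p^\nu}\psi^\nu$; then $\Theta$ is affine, and since $K$ is nonempty, compact and convex, the variational inequality $\mathrm{VI}(K,\Theta)$ has a solution $p^\ast$ by the classical existence result for variational inequalities on compact convex sets. I claim $p^\ast$ is a Nash equilibrium of~\eqref{subproblem-restricted}: for any player $\nu$ and any $p^\nu$ admissible for player $\nu$ in~\eqref{subproblem-restricted} when the others play $p^{\ast,-\nu}$, the point $(p^\nu,p^{\ast,-\nu})$ still lies in $K$---because, by joint convexity, every constraint defining $K$ that depends on $p^\nu$ is one of player $\nu$'s own constraints in~\eqref{subproblem-restricted} (explicitly, or privately through $\hC^\nu$), hence satisfied---so the variational inequality restricted to the $\nu$-block gives $\langle \grad_{p^\nu}\psi^\nu(p^\ast),\, p^\nu-p^{\ast,\nu}\rangle\ge 0$, and convexity of $\psi^\nu(\cdot;p^{\ast,-\nu})$ makes $p^{\ast,\nu}$ a best response. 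Combined with the first paragraph, this proves the theorem.

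The main obstacle is exactly this last verification: one must show that perturbing a single player's strategy away from $p^\ast$ keeps the whole point in $K$, and this is where the jointly-convex structure---which makes the coupled constraints shared by all players---is indispensable; for a general player-convex GNEP the argument breaks down, which is why the hypothesis is needed. Everything else is a routine compactness/continuity argument. As an alternative matching the discussion preceding the theorem, one may instead penalize the coupled constraints of~\eqref{subproblem-restricted} to obtain a player-convex NEP over the compact convex sets $\hC^\nu$, which has a Nash equilibrium by Kakutani's fixed point theorem (cf.\ \cite[Section~11]{palomar2010convex}), and then invoke exactness of the penalty for shared affine constraints (\cite{ba2022exact}); in that route the nontrivial point becomes the existence of a uniform exact penalty parameter.
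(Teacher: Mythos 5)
Your argument is correct, and its outer skeleton is the same as the paper's: reduce to existence of a Nash equilibrium of the restricted subproblem~\eqref{subproblem-restricted} via Theorem~\ref{thm-equivalence-restricted-subproblem}, then pass from a GNE of~\eqref{subproblem} to a solution of the LC subproblem~\eqref{KKT-linearized} using player convexity (affine constraints, so multipliers exist). Where you genuinely depart from the paper is the core existence step: the paper simply invokes \cite[Corollary 8]{ba2022exact} for the restricted subproblem, whereas you give a self-contained Rosen-type ``variational equilibrium'' argument --- form the common set $K$, solve $\mathrm{VI}(K,\Theta)$ by the classical existence theorem on nonempty compact convex sets, and use the jointly-convex structure to show that any unilateral admissible deviation keeps the joint point in $K$, so the VI solution is a Nash equilibrium. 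This buys independence from the penalization machinery of \cite{ba2022exact} (which the paper only needs in the player-convex case, Theorem~\ref{thm-subproblem-general-solvability}), at the cost of verifying the slice structure of $K$, and you correctly locate joint convexity as the hypothesis that makes this work. Two points should be spelled out rather than asserted: (i) nonemptiness of $K$ is not mere ``unwinding of the definition'' --- the witnesses in~\eqref{def-hC} could a priori differ across players; the clean argument is to take any $p^1\in\hC^1$ together with its witness $p^{-1}\in C^{-1}$ and note that, since all constraints are shared, every linearized constraint appearing in $K$ is either non-private for player $1$ (satisfied by the witness property) or depends only on $p^1$ (satisfied because $p^1\in C^1$), whence each component $p^\mu$ admits the remaining components as its own witness and $(p^1,p^{-1})\in K$; (ii) convexity of each player's quadratic objective requires $\lam\in\R^m_+$ together with convexity of $\theta^\nu(\cdot,x^{-\nu})$, which the theorem statement leaves implicit --- the paper's appeal to \cite{ba2022exact} needs the same, and your observation that Algorithm~\ref{algslcp} maintains $\lam^k\in\R^m_+$ is the appropriate justification in context.
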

\begin{proof}
By \cite[Corollary 8]{ba2022exact}, the restricted GNE subproblem~\eqref{subproblem-restricted} admits at least one GNE. Combining this with Theorem~\ref{thm-equivalence-restricted-subproblem}, we conclude that the GNE subproblem~\eqref{subproblem} admits at least one GNE. By condition $(ii)$, any such GNE yields a solution to the LC subproblem~\eqref{KKT-linearized}.
\end{proof}

If the joint convexity fails to hold, establishing the existence becomes more delicate.

\begin{theorem}\label{thm-subproblem-general-solvability}
 Suppose that the following conditions hold:
		\begin{enumerate}
			\item The set $\hC^\nu$ is nonempty and compact for all $\nu \in [N]$;
			\item The GNEP \eqref{GNEP} is player-convex;
			\item For any $p\in \hC = \Proj_{\nu \in [N]} \hC^\nu$ such that
			$$
			g^{\hnu}_{i} (x) + \J_{x} g_{i}^{\hnu} (x) p >  0 
			$$  
			for some $i\in I^{\hnu}_{\np}$ and $\hnu\in[N]$, there exists a vector $\hat{p}^{\hnu}\in \hC^{\hnu}$ such that
			\begin{equation*}
			g^{\hnu}_{j} (x) + \J_{x^{\hnu}} g_{j}^{\hnu} (x) {\hat{p}}^{\hnu} + \J_{x^{-{\hnu}}} g^{\hnu}_j (x) p^{-\hnu} \le  0\quad \text{for all}\ j\in I^{\hnu}_{\np}. 
			\end{equation*}
		\end{enumerate}
		Then the LC subproblem~\eqref{KKT-linearized} at $(x,\lambda)$ admits at least one solution.
\end{theorem}

\begin{proof}
We provide a concise proof using the techniques from \cite{ba2022exact}. Consider the following NEP with a penalty parameter $\varrho>0$, where each player solves 
\begin{equation}\label{Subproblem-NEP}
        \begin{aligned}
            \min\limits_{p^\nu \in \R^{n_{\nu}}}\ \ & \frac{1}{2} (p^\nu)^\top 
            \nabla^2_{x^\nu x^\nu} L^\nu (x,\lam^\nu) p^\nu + (p^\nu)^\top  \nabla^2_{x^\nu x^{-\nu}} L^\nu (x,\lam^\nu) p^{-\nu}\\
           & \qquad \qquad + \grad_{x^\nu} \theta^\nu (x)^\top p^\nu + \varrho \sum\limits_{j\in I^{\nu}_{\np}} \left( g_j^\nu (x) + \J_{x} g^\nu_j (x) p\right)_+ \\
            {\rm s. t.}\ \ \ \ 
            & p^\nu \in \hC^\nu.
        \end{aligned}
    \end{equation}
Let $\bp$ be a Nash equilibrium of \eqref{Subproblem-NEP}, whose existence is guaranteed by \cite[Section~11]{palomar2010convex}. We show by contradiction that $\bp$ is also a GNE of \eqref{subproblem-restricted} when $\varrho$ is sufficiently large. Assume that $\bp$ is not a GNE of \eqref{subproblem-restricted} for any $\varrho>0$. There exists $\hnu\in [N]$ such that $\bp \notin D^{\hnu}$. Let 
\begin{equation*}
    p^{\hnu} = \arg\min_{v\in \hC^{\hnu}} \left\{ \Vert v - \bp^{\hnu} \Vert \, \middle\vert\,  g_i^{\hnu} (x) + \J_{x^{\hnu}} g^{\hnu}_j (x) v +\J_{x^{-\hnu}} g^{\hnu}_j (x) \bp^{-\hnu} \le 0,\ \forall j\in I^{\hnu}_{\np}  \right\}.
\end{equation*}
Since $\bp$ is a Nash equilibrium of \eqref{Subproblem-NEP}, we have 
\begin{equation}\label{thm-existence-2}
    \begin{aligned}
     & \grad_{x^{\hnu}} \theta^{\hnu} (x)^\top p^{\hnu} + \frac{1}{2}(p^{\hnu})^\top \grad_{x^{\hnu} x^{\hnu}} L^{\hnu}(x,\lam^{\hnu})p^{\hnu} + (p^{\hnu})^\top \grad_{x^{\hnu} x^{-{\hnu}}}L^{\hnu} (x,\lam^{\hnu})\bp^{-{\hnu}} \\
     \ge & \grad_{x^{\hnu}} \theta^{\hnu} (x)^\top \bp^{\hnu} + \frac{1}{2}(\bp^{\hnu})^\top \grad_{x^{\hnu} x^{\hnu}} L^{\hnu}(x,\lam^{\hnu})\bp^{\hnu} + (\bp^{\hnu})^\top \grad_{x^{\hnu} x^{-{\hnu}}}L^{\hnu} (x,\lam^{\hnu})\bp^{-{\hnu}}\\
     & \qquad \qquad + \varrho \sum\limits_{j\in I^{\hnu}_{\np}} \left( g_j^{\hnu} (x) + \J_{x} g^{\hnu}_j (x) \bp\right)_+.
\end{aligned}
\end{equation}    
Define the positive constant 
\begin{equation*}
     C_1:= \Vert \grad_{x^{\hnu}} \theta^{\hnu} (x) + \grad_{x^{\hnu} x^{-{\hnu}}}L^{\hnu} (x,\lam^{\hnu})\bp^{-{\hnu}} \Vert + \sup\limits_{ v\in \hC^{\hnu}} \Vert \grad_{x^{\hnu} x^{\hnu}} L^{\hnu}(x,\lam^{\hnu}) v \Vert.\\
\end{equation*}
By Hoffman's lemma \cite{hoffman2003approximate}, there exists a constant $C_2>0$ such that 
\begin{equation*}
    0<\Vert p^{\hnu} - \bp^{\hnu} \Vert \le C_2 \sum\limits_{j\in I^{\hnu}_{\np}} \left( g_j^{\hnu} (x) + \J_{x} g^{\hnu}_j (x) \bp\right)_+.
\end{equation*}
Choose $\varrho> C_1 C_2$. Subtracting the right-hand side of \eqref{thm-existence-2} from its left-hand side yields
\begin{equation*}
   0 \ge \left( - C_1 + \frac{\varrho}{C_2}\right)\Vert p^{\hnu} - \bp^{\hnu} \Vert > 0,
\end{equation*}
which contradicts the choice of $\varrho$. Thus, $\bp$ is also a GNE of \eqref{subproblem-restricted}. By Theorem~\ref{thm-equivalence-restricted-subproblem} and condition $(ii)$, $\bp$ yields a solution to the LC subproblem~\eqref{KKT-linearized}. 
\end{proof}

To illustrate the above theorems on solvability results, we examine the subproblem arising from Example~\ref{exm-internet}. The analysis is given in Appendix~\ref{appendix:further analysis for the internet switching model}; see Proposition~\ref{exm-subproblem-solvability}.

\section{Global convergence}\label{sec:global-convergence}
In this section, we establish the global convergence of the SLCP method and provide sufficient conditions for the boundedness of the iterates. The analysis is based on the proposed merit function and the $(\alpha,\beta)$-monotonicity condition.

\begin{theorem}\label{thm:global convergence}
Suppose that the sequence $\{(x^k,\lambda^k)\}$ generated by Algorithm~\ref{algslcp} satisfies the following conditions at every iterate:
		\begin{enumerate}
			\item The KKT system~\eqref{KKT} satisfies the $(\alpha,\beta)$-monotonicity condition at $(x^k,\lambda^k)$ for some $\alpha,\beta>0$;
			\item There exists a vector $d\in \R^n$ such that
			\begin{equation}\label{slater}
				g^\nu_i (x^k) + \J_x g^\nu_i (x^k)d \le 0 \quad \text{for all}\ i\in[m_\nu], \nu\in [N].
			\end{equation} 
		\end{enumerate}
		Then, for any $\rho \ge \frac{2\alpha^2}{\beta} + 1$, the sequence $\{(x^k,\lambda^k)\}$ either terminates at a KKT pair of \eqref{KKT}, or any accumulation point $(\bar{x},\bar{\lambda})$ satisfying
		\begin{equation}\label{strict slater}
		\left\{ d \mid g^\nu_i (\bx) + \J_x g^\nu_i (\bx)d < 0 \quad \text{for all}\ i\in[m_\nu], \nu\in [N] \right\} \neq \emptyset
		\end{equation}
		is a KKT pair of \eqref{GNEP}.
	Moreover, if $g^\nu_i (x)$ is convex for all $i\in [m_\nu], \nu\in[N]$, conditions \eqref{slater} and \eqref{strict slater} can be replaced by the Slater condition for $G(x)$.
\end{theorem}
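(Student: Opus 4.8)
The plan follows the classical line-search globalization scheme; the two nonstandard ingredients are the boundedness of the subproblem directions near an accumulation point, supplied by the LCP stability Lemma~\ref{lemma:LCP stability}, and a careful limit passage for the difference quotients of the nonsmooth merit function $\Phi_\rho$. First I would note that Algorithm~\ref{algslcp} is well defined: condition~(i) together with \eqref{slater}, which is precisely condition~(ii) of Theorem~\ref{thm:subproblem-solvability-1}, makes the LC subproblem~\eqref{KKT-linearized} solvable at every $(x,\lam)\in\R^n\times\R^m_+$, and since the iterates satisfy $\lam^k\in\R^m_+$, Step~2 is always executable, while Lemma~\ref{lemma:linesearch terminate} guarantees that Step~4 terminates finitely. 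If the algorithm stops at Step~3 it does so with $\Phi_\rho(x^k,\lam^k)=0$, so $(x^k,\lam^k)$ is a KKT pair of~\eqref{GNEP} by Proposition~\ref{prop:Phi = 0 implies KKT}. Otherwise the generated sequence is infinite; the acceptance test gives $\Phi_\rho(x^{k+1},\lam^{k+1})\le(1-\eta\tau^k)\Phi_\rho(x^k,\lam^k)\le\Phi_\rho(x^k,\lam^k)$, so $\{\Phi_\rho(x^k,\lam^k)\}$ is nonincreasing and bounded below by $0$, hence converges to some $\Phi^*\ge0$. It therefore suffices to prove $\Phi^*=0$: any accumulation point $(\bx,\blam)$ then satisfies $\Phi_\rho(\bx,\blam)=\Phi^*=0$ by continuity and, as $\blam\in\R^m_+$, is a KKT pair of~\eqref{GNEP} by Proposition~\ref{prop:Phi = 0 implies KKT}.

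Suppose, for contradiction, that $\Phi^*>0$, and let $(\bx,\blam)$ be an accumulation point satisfying \eqref{strict slater}, with a subsequence $(x^{k_j},\lam^{k_j})\to(\bx,\blam)$. I would first bound the directions $(p^{k_j},q^{k_j})$. As in the proof of Theorem~\ref{thm:subproblem-solvability-1}, the LC subproblem is equivalent to the LCP~\eqref{LCP:thm-solvability} in the variable $\tlam=\lam+q$, with matrix $M(\bx,\blam)=\J_x G(\bx)\J_x F(\bx,\blam)^{-1}E(\bx)$, positive semidefinite by the $(\alpha,\beta)$-monotonicity condition, and some vector $h(\bx,\blam)$. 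If $0\neq u\ge0$ and $u^\top M(\bx,\blam)u=0$, then \eqref{monotone-property} forces $\J_xG(\bx)^\top u=0$, hence $u^\top h(\bx,\blam)=-u^\top G(\bx)=-u^\top\bigl(G(\bx)+\J_xG(\bx)d\bigr)>0$, where $d$ is the direction from \eqref{strict slater}. Thus the hypothesis of Lemma~\ref{lemma:LCP stability} holds at $(\bx,\blam)$, and since $F,G,\J_xF,\J_xG,E$ are continuous with $\J_xF$ everywhere nonsingular, the LCP data depend continuously on $(x,\lam)$; Lemma~\ref{lemma:LCP stability} then yields a neighborhood of $(\bx,\blam)$ and a constant $c$ bounding all solutions $\tlam$ of the reformulated LCP on that neighborhood. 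Hence $\tlam^{k_j}=\lam^{k_j}+q^{k_j}$ is bounded for large $j$, and so are $q^{k_j}$ and $p^{k_j}=-\J_xF(x^{k_j},\lam^{k_j})^{-1}\bigl(E(x^{k_j})q^{k_j}+F(x^{k_j},\lam^{k_j})\bigr)$. Passing to a further subsequence, $(p^{k_j},q^{k_j})\to(\bar p,\bar q)$, which by continuity solves the LC subproblem at $(\bx,\blam)$, and Theorem~\ref{thm:descent} gives $\Phi_\rho'\bigl((\bx,\blam);(\bar p,\bar q)\bigr)\le-\Phi_\rho(\bx,\blam)=-\Phi^*<0$.

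Next I would analyze the step sizes. Summing the acceptance inequality over $k$ and using $\Phi_\rho(x^k,\lam^k)\ge\Phi^*>0$ shows $\sum_k\tau^k<\infty$, so $\tau^k\to0$; in particular $\tau^{k_j}<\tau_0$ for large $j$, which means the trial step $t_j:=2\tau^{k_j}\downarrow0$ was rejected,
$$
\frac{\Phi_\rho\bigl((x^{k_j},\lam^{k_j})+t_j(p^{k_j},q^{k_j})\bigr)-\Phi_\rho(x^{k_j},\lam^{k_j})}{t_j}>-\eta\,\Phi_\rho(x^{k_j},\lam^{k_j}).
$$
The crux is the limit of the left-hand side. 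Writing $\Phi_\rho$ as the sum of the $C^1$ function $\tfrac{\rho}{2}\|F\|^2$ and terms $(\phi(\cdot))_+$ with $\phi\in C^1$ (namely $\phi=-\lam^\top G$ and $\phi=g^\nu_i$), I would combine the mean-value theorem applied to each $\phi$ with the elementary inequalities $|(a)_+-(b)_+|\le|a-b|$ and $(a)_+\le(b)_+ + (a-b)_+$ to show that, for base points $y_j\to(\bx,\blam)$, directions $v_j\to(\bar p,\bar q)$ and $t_j\downarrow0$,
$$
\limsup_{j\to\infty}\frac{\Phi_\rho(y_j+t_jv_j)-\Phi_\rho(y_j)}{t_j}\le\Phi_\rho'\bigl((\bx,\blam);(\bar p,\bar q)\bigr),
$$
arguing term by term: the smooth term gives an exact limit, a term with $\phi(\bx,\blam)\neq0$ reduces to the smooth case by continuity, and a term with $\phi(\bx,\blam)=0$ is controlled by $(a)_+\le(b)_+ + (a-b)_+$. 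Together with the displayed inequality and $\Phi_\rho(x^{k_j},\lam^{k_j})\to\Phi^*$, this gives $-\eta\Phi^*\le\Phi_\rho'\bigl((\bx,\blam);(\bar p,\bar q)\bigr)\le-\Phi^*$, hence $(1-\eta)\Phi^*\le0$; since $\eta<1$ this forces $\Phi^*\le0$, contradicting $\Phi^*>0$. Therefore $\Phi^*=0$, proving the main claim.

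For the last assertion, if every $g^\nu_i$ is convex, the gradient inequality $g^\nu_i(y)\ge g^\nu_i(x)+\J_xg^\nu_i(x)(y-x)$ shows that a Slater point $\hx$ for $G$ (with $g^\nu_i(\hx)<0$ for all $i,\nu$) yields $g^\nu_i(x)+\J_xg^\nu_i(x)(\hx-x)\le g^\nu_i(\hx)<0$ at every $x$; hence both \eqref{slater} and \eqref{strict slater} hold automatically, and the conclusion follows from the first part. I expect the main obstacle to be the $\limsup$ estimate above: since the naive convergence of difference quotients fails for $(\cdot)_+$ once the base point, the direction and the step size all vary simultaneously, one must exploit the piecewise-$C^1$ structure of $\Phi_\rho$ through $(a)_+\le(b)_+ + (a-b)_+$; a secondary technical point is verifying the stability hypothesis of Lemma~\ref{lemma:LCP stability} at $(\bx,\blam)$, which is exactly where \eqref{strict slater} is used.
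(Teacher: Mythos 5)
Your proposal is correct and follows the same overall skeleton as the paper's proof: well-posedness of the algorithm, boundedness of the subproblem directions near an accumulation point satisfying \eqref{strict slater} via Lemma~\ref{lemma:LCP stability} (verifying its hypothesis exactly as you do, by combining \eqref{monotone-property} with the strict inequality $G(\bx)+\J_x G(\bx)d<0$), summability of the step sizes from the multiplicative descent condition, and a contradiction between the descent property of the limiting direction $(\bp,\bq)$ and the failure of the line search. The one place where you genuinely diverge is the final limit passage. The paper fixes a single step length $\bar{\tau}=\tau_0/2^{j}$ that achieves sufficient decrease at the limit point $(\bx,\blam)$ along $(\bp,\bq)$, observes that since $\tau^k\to 0$ this \emph{fixed} $\bar{\tau}$ must eventually have been tried and rejected, and then passes to the limit using only the continuity of $\Phi_\rho$ at the fixed point $(\bx+\bar{\tau}\bp,\blam+\bar{\tau}\bq)$ --- no differentiation of $\Phi_\rho$ is needed at that stage. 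You instead use the rejected trial steps $t_j=2\tau^{k_j}\downarrow 0$, which forces you to prove the upper semicontinuity estimate $\limsup_j t_j^{-1}\bigl(\Phi_\rho(y_j+t_jv_j)-\Phi_\rho(y_j)\bigr)\le\Phi_\rho'\bigl((\bx,\blam);(\bp,\bq)\bigr)$ with base points, directions and steps all varying. Your sketch of this estimate is sound: the inequality $(a)_+-(b)_+\le(a-b)_+$ together with the mean value theorem handles each piece $(\phi(\cdot))_+$ with $\phi(\bx,\blam)=0$, and the remaining cases are routine. So your route is valid but pays for an extra technical lemma that the paper's fixed-step trick avoids; on the other hand, your argument yields the cleaner quantitative conclusion $-\eta\Phi^*\le\Phi_\rho'\le-\Phi^*$ and sidesteps the paper's slightly awkward use of the factor $(1-2\eta\bar{\tau})$, which as written implicitly needs $2\eta<1$ (or a replacement of $2\eta$ by some $\eta'\in(\eta,1)$). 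The reduction to "$\Phi^*=0$" and the convexity/Slater remark at the end match the paper.
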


\begin{proof}
We first show that if $(x^k,\lam^k)$ converges to a point $(\bx,\blam)$ satisfying \eqref{strict slater}, then the solutions $(p^k,q^k)$ of the LC subproblem~\eqref{KKT-linearized} at $(x^k,\lam^k)$ are bounded. Such a solution sequence exists for all $(x^k,\lam^k)$ due to Theorem~\ref{thm:subproblem-solvability-1}. Define 
\begin{equation*}
\begin{aligned}
    h^k& :=  -G(x^k) - \J_x G(x^k) \J_x F(x^k,\lam^k)^{-1}\left(E(x^k)\lam^k- F(x^k,\lam^k)\right),\\
    M^k&:= \J_x G(x^k) \J_x F(x^k,\lam^k)^{-1}E(x^k).
\end{aligned}
\end{equation*}
Since $(p^k,q^k)$ solves the LC subproblem~\eqref{KKT-linearized} at $(x^k,\lam^k)$, we have 
\begin{equation}\label{equivalence-MLCP-LCP}
    \begin{aligned}
        &\tlam^k:= \lam^k + q^k \in {\rm SOL}(h^k,M^k),\\ 
        & p^k = -\J_x F(x^k,\lam^k)^{-1}\left(E(x^k)\tlam^k - E(x^k) \lam^k +F(x^k,\lam^k)\right).
    \end{aligned}
\end{equation}
By the $(\alpha,\beta)$-monotonicity condition, 
$$
\sup_{k}\Vert \J_x F(x^k,\lam^k)^{-1} \Vert \le \alpha,
$$ 
which together with $(x^k,\lam^k) \to (\bx,\blam)$ implies that
\begin{equation*}
    \lim\limits_{k\to \infty} h^k = \bar{h}, \quad \lim\limits_{k\to \infty} M^k = \bar{M},
\end{equation*}
where
\begin{equation*}
    \begin{aligned}
           \bar{h} &= -G(\bx) - \J_x G(\bx) \J_x F(\bx,\blam)^{-1}\left(E(\bx)\blam- F(\bx,\blam)\right),\\
           \bar{M} &= \J_x G(\bx) \J_x F(\bx,\blam)^{-1}E(\bx).
    \end{aligned}
\end{equation*}
To apply Lemma~\ref{lemma:LCP stability}, consider nonzero $u\ge 0$ with $\bar{M}u\ge 0$ and $u^\top \bar{M} u =0$. The $(\alpha,\beta)$-monotonicity condition gives
\begin{equation*}
    0 = u^\top \bar{M}u \ge \beta \Vert \J_x G(\bx)^\top u \Vert^2, 
\end{equation*}
and implies that $\J_x G(\bx)^\top u = 0$. Thus, 
\begin{equation}\label{ineq-3}
    \bar{h}^\top u = - G(\bx)^\top u.
\end{equation}
By \eqref{strict slater}, there exists a vector $d\in \R^n$ such that 
\begin{equation}\label{JGd<0}
    G(\bx) + \J_x G(\bx) d < 0.
\end{equation}
Multiplying \eqref{JGd<0} by $u \in \R^m_+$ yields
\begin{equation*}
    0>u^\top \left( G(\bx) + \J_x G(\bx) d \right) = u^\top G(\bx).
\end{equation*}
which combined with \eqref{ineq-3} verifies
\begin{equation*}
		\bar{M}u\ge 0,\ u^\top \bar{M} u = 0 \implies u^\top \bar{h} > 0.
\end{equation*}
Thus, ${\rm SOL}(h^k,M^k)$ is uniformly bounded by Lemma~\ref{lemma:LCP stability}. This together with~\eqref{equivalence-MLCP-LCP} implies the boundedness of the sequence $\{(p^k,q^k)\}$. 

Next, suppose that $(\bx,\blam)$ is the limit point along an infinite set  
$K_0 \subset \mathbb{N}$, i.e.,
$$
(x^k,\lam^k) \to (\bx,\blam) \quad \text{as } k\in K_0,\ k\to \infty.
$$
Then the sequence $\{(p^k,q^k)\}_{k\in K_0}$ is bounded. Let $(\bp,\bq)$ be an accumulation point of $\{(p^k,q^k)\}_{k\in K_0}$. By continuity, $(\bp,\bq)$ solves the LC subproblem~\eqref{KKT-linearized} at $(\bx,\blam)$. We show by contradiction that $(\bx,\blam)$ is a KKT pair.
Assume the contrary. It follows from Proposition~\ref{prop:Phi = 0 implies KKT} that 
$\Phi_\rho (\bx,\blam) >0$. By the descent property of the SLCP method,
\begin{equation*}
\Phi_\rho (\bx,\blam) \le \prod\limits_{k \in K_0} (1- \eta \tau_k) \Phi_\rho (x^k,\lam^k)
\le \Phi_\rho (x^0,\lam^0) \left( \prod\limits_{k\in K_0} (1- \eta \tau_k) \right),
\end{equation*}
which implies
\begin{equation}\label{prf:tau_k->0}
    \sum\limits_{k\in K_0} \tau_k < + \infty,\quad \text{and thus }     \lim\limits_{\overset{k \in K_0}{k\rightarrow + \infty} } \tau_k = 0.
\end{equation}
Since $(\bp,\bq)$ solves the LC subproblem~\eqref{KKT-linearized} at $(\bx,\blam)$, Lemma~\ref{lemma:linesearch terminate} ensures the existence of the step length $\bar{\tau} = \frac{\tau_0}{2^{j}}>0$ for some $j$ such that 
\begin{equation}\label{prf:Phi<= (1-eta/2)}
    \Phi_\rho (\bx + \bar{\tau}\bp, \blam + \bar{\tau}\bq) \le (1- 2 \eta \bar{\tau})\Phi_\rho (\bx,\blam).
\end{equation}
However, by \eqref{prf:tau_k->0} and the line search procedure, there exists a sufficiently large $\underline{k}$ such that for any $k\in K_0$, $k \ge \underline{k}$,
\begin{equation*}
    \Phi_\rho (x^k+\bar{\tau}p^k, \lam^k + \bar{\tau}q^k) >  (1-\eta \bar{\tau})\Phi_\rho (x^k,\lam^k).
\end{equation*}
Letting $k\in K_0$ and $k\rightarrow + \infty$ yields
\begin{equation*}
    \Phi_\rho (\bx + \bar{\tau}\bp, \blam + \bar{\tau}\bq) \ge (1-\eta \bar{\tau}) \Phi_\rho (\bx,\blam),
\end{equation*}
which contradicts the inequality \eqref{prf:Phi<= (1-eta/2)}. Thus, $(\bx,\blam)$ is a KKT pair. 

Finally, suppose that $G(\cdot)$ is convex and the Slater condition holds. Then there exists a point $\hat{x}$ such that $G(\hat{x}) < 0$. For any $x\in \R^n$, let $d = \hat{x} -x$, and we have
$$
        g^\nu_i (x) + \J_x g^\nu_i (x)d \le g^\nu_i (\hat{x}) <0\quad \text{for all } i\in [m_\nu], \nu \in [N],
$$
which ensures that both \eqref{slater} and \eqref{strict slater} hold.
\end{proof}

The boundedness of the iterates can be further guaranteed by a mild additional assumption.

\begin{theorem}\label{thm:boundedness}
 Let $\{ (x^k,\lam^k)\}$ be the sequence generated by Algorithm~\ref{algslcp}. Suppose that the following conditions hold:
 \begin{enumerate}
     \item The KKT system~\eqref{KKT} satisfies the $(\alpha,\beta)$-monotonicity condition at all $(x^k,\lambda^k)$ for some $\alpha,\beta>0$;
     \item $\lim\limits_{\Vert x \Vert \rightarrow +\infty} \Vert \left(G(x) \right)_+ \Vert = + \infty$;
    \item For any $x\in \R^n$, there exists a direction $d \in \R^n$ such that
    \begin{equation*}
         g^\nu_i(x) + \J_x g^\nu_i (x) d < 0 \quad \text{for all}\ i\in [m_\nu], \nu \in [N].
    \end{equation*}
 \end{enumerate}
      Then, for any starting point $(x^0,\lam^0)\in \R^n \times \R^m_+$ and $\rho \ge \frac{2\alpha^2}{\beta} + 1$, the sequence $\{ (x^k, \lambda^k) \}$ remains bounded. Moreover, the sequence either terminates at a KKT pair, or any accumulation point 
	  is a KKT pair of \eqref{GNEP}.
\end{theorem}

\begin{proof}
We proceed by contradiction. Assume that $\Vert (x^k,\lam^k) \Vert$ is unbounded. Without loss of generality, let 
\begin{equation*}
    \lim\limits_{k\to \infty} \| (x^k,\lam^k) \| = +\infty.
\end{equation*}
We consider two cases. 

\textit{Case 1:} $\Vert x^k \Vert \rightarrow +\infty$.  Recall that 
\begin{equation*}
    \Phi_\rho(x^k,\lam^k) = \left( - (\lam^k)^\top G(x^k) \right)_+
    + \frac{\rho}{2} \Vert F(x^k,\lambda^k) \Vert^2 + \sum\limits_{\nu \in [N]} \sum\limits_{i\in[m_\nu]} \left( g^\nu_i ({x^k})\right)_+.
\end{equation*}
By condition $(ii)$, we have
$$
\Phi_\rho (x^k,\lam^k) \ge \sum\limits_{\nu \in [N]} \sum\limits_{i\in[m_\nu]} \left( g^\nu_i ({x}^k)\right)_+ \rightarrow +\infty,
$$
which contradicts the fact that 
$\Phi_\rho(x^k,\lambda^k)$ is nonincreasing along the iterations. 

\textit{Case 2:} $\{x^k\}$ is bounded but $\Vert \lambda^k \Vert \rightarrow +\infty$. 
Since $\Phi_\rho$ is nonincreasing and 
$$
        \Phi_\rho(x^k,\lam^k) \ge \frac{\rho}{2}\Vert F(x^k,\lam^k) \Vert^2,
$$
we get that $\Vert F(x^k,\lam^k) \Vert$ is bounded. Let 
$$
 u:= \lim_{k\rightarrow +\infty} \frac{\lam^k}{\Vert \lam^k \Vert}  \in \R^m_+ \setminus \{ 0\},
$$
and let $\lim_{k \rightarrow \infty} x^k = \bx$ (after taking a subsequence if necessary). Then
\begin{equation*}
    \lim\limits_{k \rightarrow +\infty } \frac{ F(x^k,\lam^k) }{\Vert \lam^k \Vert} = E(\bx) u = 0.
\end{equation*}
It follows from the $(\alpha,\beta)$-monotonicity condition that for any $\lam \in \R^m_+$, 
\begin{equation*}
    0 = u^\top \J_x G(\bx) \J_x F(\bx,\lam)^{-1} E(\bx) u \ge \beta \Vert \J_x G(\bx)^\top u \Vert^2.
\end{equation*}
Thus, $\J_x G(\bx)^\top u = 0$ and for any $d\in \R^n$,
\begin{equation*}
    u^\top G(\bx) = u^\top (G(\bx) + \J_x G(\bx)d).
\end{equation*}
By condition $(iii)$, there exists $d\in \R^n$ such that $G(\bx) + \J_x G(\bx) d < 0$. Since $ u \in \R^m_+ \setminus \{0\}$, we obtain that
\begin{equation*}
    u^\top G(\bx) = u^\top (G(\bx) + \J_x G(\bx)d) < 0.
\end{equation*}
Consequently, there exists $c_1<0$ such that for all sufficiently large $k$,
\begin{equation*}
    (\lam^k)^\top G(x^k) \le \frac{c_1}{2} \Vert \lam^k  \Vert \rightarrow - \infty,
\end{equation*}
which implies  
\begin{equation*}
    \Phi_\rho (x^k,\lam^k) \ge \left( -(\lam^k)^\top G(x^k) \right)_+ \rightarrow +\infty. 
\end{equation*}
This contradicts the boundedness of $\Phi_\rho (x^k,\lam^k)$. The remaining conclusion follows from Theorem~\ref{thm:global convergence}.
\end{proof}

Condition $(iii)$ holds when $G(x)$ is convex and the Slater condition holds. When the GNEP reduces to an NLP problem, the three conditions have a one-to-one correspondence with \cite[Theorem 3.3]{han1977globally}, which establishes the global convergence of the classical SQP method with exact Lagrangian Hessians.

Analogous to the Newton-type methods, the SLCP method also achieves local quadratic convergence when the solution satisfies appropriate regularity conditions. However, these regularity conditions are relatively more involved for GNEPs. In the following section, we will discuss these regularity conditions in detail.

\section{Local quadratic convergence}\label{sec:local convergence}
In this section, we present sufficient conditions for the local quadratic convergence of the SLCP method. These conditions are established based on the concepts of hemistability and semistability. We further show that these two properties are not equivalent for GNEPs, in contrast to the case of standard NLP problems. Moreover, an error bound is derived from semistability.

We adopt the stability definitions from \cite{bonnans1994local} for the local quadratic convergence of Algorithm~\ref{algslcp}. Recall that $\N_{\R^m_+} (\lam)$ denotes the normal cone to $\R^m_+$ at $\lam\in \R^m$, and 
\begin{equation*}
    0\le \lam \perp -G(x) \ge 0\ \Longleftrightarrow\ 0 \in -G(x) + \N_{\R^m_+} (\lam).
\end{equation*}

\begin{definition}[Semistable and hemistable points]
 Let $(\bar{x}, \bar{\lambda})$ be a KKT pair of the GNEP \eqref{GNEP}. 
 \begin{enumerate}
     \item The pair $(\bar{x}, \bar{\lambda})$ is said to be \emph{semistable} if there exist constants $c_1,c_2>0$ such that any solution $(x,\lam)$ to the perturbed KKT system
    \begin{equation}\label{def-semistable}
        \begin{pmatrix}
            a\\[-2pt]
            b
        \end{pmatrix}
        \in 
        \begin{pmatrix}
            F(x, \lambda)\\[-2pt]
            -G(x)
        \end{pmatrix}
        + 
        \N_{\R^n \times \R^m_+}(x, \lambda),
    \end{equation}
    satisfying $\Vert x - \bar{x} \Vert + \Vert \lambda - \bar{\lambda} \Vert \le c_1$, also satisfies
    \[
        \Vert x - \bar{x} \Vert + \Vert \lambda - \bar{\lambda} \Vert \le c_2 (\Vert a \Vert + \Vert b \Vert).
    \]
    \item The pair $(\bar{x}, \bar{\lambda})$ is said to be \emph{hemistable} if for all $\delta>0$, there exists $\epsilon>0$ such that, given $(\tx,\tlam)$ with 
    $$
    \Vert \tx-\bar{x} \Vert+ \Vert \tlam- \blam \Vert + \| \J_x F(\tx,\tlam) - \J_x F(\bx,\blam) \| + \| E(\tx)-E(\bx)\| \le \epsilon,
    $$
    the LC subproblem at $(\tx,\tlam)$
		\begin{equation*}
			\begin{array}{c}
				\J_x {F(\tx,\tlam)}(x-\tx)+ E(\tx)(\lambda - \tlam)+ {F(\tx,\tlam)} = 0,\\
				0 \le {\lambda} \perp -{G( \tx)} - \J_x {G( \tx)}(x-\tx) \ge 0
			\end{array}
		\end{equation*} 
		admits a solution $(x,\lambda)$ satisfying $\Vert x - \bar{x} \Vert + \Vert \lambda - \bar{\lambda} \Vert \le \delta$.
 \end{enumerate}
\end{definition}

\begin{remark}
The above definitions highlight two stability properties. Semistability guarantees the local calmness property of the solution mapping with respect to perturbations, while hemistability concerns the existence of approximate solutions to the perturbed linearized subproblem. Generally, neither semistability nor hemistability implies the other.
\end{remark}

As noted in \cite[Remark 2.4]{bonnans1994local}, both stability properties are guaranteed by the strong regularity of the system, a concept introduced by Robinson \cite{robinson1980strongly}. We now recall its definition. Let $\S(a,b): \R^n\times \R^m \rightrightarrows \R^n\times \R^m$ be the set-valued mapping that assigns each $(a,b)$ the set of solutions to the perturbed LC subproblem at $(\bx,\blam)$:
 \begin{equation}\label{def-perturbed-LC-subproblem}
			\begin{array}{c}
				\J_x {F(\bx,\blam)}(x-\bx)+ E(\bx)(\lambda - \blam)+ {F(\bx,\blam)} = a,\\
				0 \le {\lambda} \perp -b-{G( \bx)} - \J_x {G( \bx)}(x-\bx) \ge 0.
			\end{array}
		\end{equation}

\begin{definition}[Strong regularity]\label{def-strong regularity}
 Let $(\bx,\blam)$ be a KKT pair of \eqref{GNEP}. The KKT system~\eqref{KKT} is said to be \emph{strongly regular} at $(\bx,\blam)$ if there exist open neighborhoods $U$ of $(0,0)$ and $V$ of $(\bx,\blam)$ such that the mapping $(a,b) \mapsto \S(a,b) \cap V$ is a single-valued Lipschitz continuous function from $U$ to $V$.
\end{definition}


Following \cite[Theorem 2.1 and 2.3]{bonnans1994local}, the local convergence of the SLCP method is ensured by semistability and hemistability, as stated in the following theorem.

\begin{theorem}\label{thm:local quadratic convergence}
Suppose that $(\bar{x},\bar{\lambda})$ is a semistable solution of \eqref{KKT}. The following statements hold:
\begin{enumerate}
    \item If the sequence $\{(x^k,\lam^k)\}$ generated by Algorithm~\ref{algslcp} taking full steps (i.e., step length $\tau_k = 1$) for sufficiently large $k$ converges to $(\bx,\blam)$, then the local convergence rate is quadratic;
    \item If $(\bx,\blam)$ is also hemistable, then there exists $\epsilon>0$ such that, whenever $\Vert x^0 - \bar{x} \Vert + \Vert \lambda^0 - \bar{\lambda} \Vert < \epsilon$, the sequence $\{ (x^k,\lam^k) \}$ generated by Algorithm~\ref{algslcp} taking full steps converges quadratically to $(\bar{x},\bar{\lambda})$.
\end{enumerate}

\end{theorem}

\begin{proof}
     If the full step is taken at each iteration of Algorithm~\ref{algslcp}, then the update of $(x^k,\lam^k)$ can be reformulated as 
    \begin{equation*}
    \begin{aligned}
      & \begin{pmatrix}
          0\\
          0
      \end{pmatrix}
      \in 
      \begin{pmatrix}
          F(x^k,\lam^k)\\
          -G(x^k)
      \end{pmatrix}
      +\begin{pmatrix}
          \J_x F(x^k,\lam^k) & E(x^k)\\
          -\J_x G(x^k) & 0
      \end{pmatrix}
      \begin{pmatrix}
          x^{k+1}-x^{k}\\
          \lam^{k+1} - \lam^{k}
      \end{pmatrix}\\
      &\qquad \qquad \qquad \qquad \qquad \qquad \qquad \qquad \qquad \qquad + \N_{\R^{n} \times \R^m_{+}}(x^{k+1},\lam^{k+1}).
    \end{aligned}
    \end{equation*}
    The results follow directly from \cite[Theorem 2.1 and 2.3]{bonnans1994local}.
\end{proof}

Define the index sets associated with $(\bx,\blam)$ as
\begin{equation}
\begin{aligned}
I^\nu_+ := \left\{ i\in [m_\nu] \mid  \blam^\nu_i > 0 = g^\nu_i (\bx)  \right\},\\
    I^\nu_0 := \left\{ i\in [m_\nu] \mid \blam^\nu_i = 0 =g^\nu_i (\bx)  \right\},\\
    I^\nu_- := \left\{ i\in [m_\nu] \mid \blam^\nu_i = 0> g^\nu_i (\bx)  \right\}.
\end{aligned}
\end{equation}
The following theorem provides an exact characterization of semistable solutions.

\begin{theorem}\label{semistable}
 A KKT pair $(\bx,\blam)$ of \eqref{GNEP} is semistable if and only if the following mixed LCP admits $(\dx,\dlam)=(0,0)$ as its unique solution:
 \begin{equation}\label{unique}
     \left\{\begin{array}{lll}
          \J_x F(\bx,\blam) \dx + E(\bx)\dlam = 0, &\\
          \J_x g^\nu_i (\bx)\dx = 0 &\qquad \text{for all}\ \nu\in[N], i\in I^\nu_+;\\
          0 \le \dlam^\nu_i \perp -\J_x g^\nu_i (\bx) \dx \ge 0 &\qquad \text{for all} \ \nu\in[N], i\in I^\nu_0;\\
          \dlam^\nu_i = 0 &\qquad  \text{for all} \ \nu\in[N], i\in I^\nu_-.
     \end{array}\right.
 \end{equation}
\end{theorem}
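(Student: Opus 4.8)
The plan is to recognize the system \eqref{unique} as the exact first-order (directional) model of the perturbed KKT system \eqref{def-semistable} at $(\bx,\blam)$, and to exploit that $F$ is $C^1$ and $G$ is $C^2$ (since every $\theta^\nu,g^\nu_i$ is $C^2$), so that near $(\bx,\blam)$ one has $F(x,\lam)=\J_x F(\bx,\blam)(x-\bx)+E(\bx)(\lam-\blam)+o(\|x-\bx\|+\|\lam-\blam\|)$ and $G(x)=G(\bx)+\J_x G(\bx)(x-\bx)+o(\|x-\bx\|)$. Throughout I use that \eqref{def-semistable} means, componentwise, $a=F(x,\lam)$ together with $0\le\lam\perp -b-G(x)\ge 0$, and I split all arguments along the index sets $I^\nu_+,I^\nu_0,I^\nu_-$ of $(\bx,\blam)$.

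\emph{Uniqueness of the trivial solution implies semistability.} I would argue by contradiction. Negating semistability with the constants $1/k$ and $k$ yields perturbations $(a^k,b^k)$ and solutions $(x^k,\lam^k)$ of \eqref{def-semistable} with $0<t_k:=\|x^k-\bx\|+\|\lam^k-\blam\|\le 1/k$ and $\|a^k\|+\|b^k\|<t_k/k=o(t_k)$; in particular $(x^k,\lam^k)\to(\bx,\blam)$. Set $(\dx^k,\dlam^k):=(x^k-\bx,\lam^k-\blam)/t_k$ on the unit sphere and pass to a convergent subsequence with limit $(\dx,\dlam)$, $\|\dx\|+\|\dlam\|=1$. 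Dividing the Taylor expansion of $a^k=F(x^k,\lam^k)$ by $t_k$ and letting $k\to\infty$ gives $\J_x F(\bx,\blam)\dx+E(\bx)\dlam=0$. For $i\in I^\nu_-$, the residual $-b^k-G(x^k)$ stays strictly positive for large $k$, which forces the corresponding multiplier component to vanish, so $\dlam^\nu_i=0$. For $i\in I^\nu_+$, the multiplier component stays strictly positive for large $k$, which forces the corresponding residual to vanish; dividing by $t_k$ and using $g^\nu_i(\bx)=0$ gives $\J_x g^\nu_i(\bx)\dx=0$. For $i\in I^\nu_0$, dividing the complementarity relation by $t_k$ and passing to the limit gives $0\le\dlam^\nu_i\perp -\J_x g^\nu_i(\bx)\dx\ge 0$. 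Hence $(\dx,\dlam)$ is a nonzero solution of \eqref{unique}, contradicting uniqueness.

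\emph{Semistability implies uniqueness of the trivial solution.} Again by contradiction: assume \eqref{unique} has a solution $(\dx,\dlam)\neq(0,0)$, and for small $t>0$ set $(x(t),\lam(t)):=(\bx+t\dx,\blam+t\dlam)$. By the first equation of \eqref{unique}, $a(t):=F(x(t),\lam(t))=o(t)$. I then construct $b(t)$ componentwise so that $0\le\lam(t)\perp -b(t)-G(x(t))\ge 0$: on $I^\nu_+$ (where $\lam^\nu_i(t)>0$ for small $t$) and on the indices of $I^\nu_0$ with $\dlam^\nu_i>0$, set $b^\nu_i(t)=-g^\nu_i(x(t))$, which is $o(t)$ since there $g^\nu_i(\bx)=0$ and $\J_x g^\nu_i(\bx)\dx=0$; on $I^\nu_-$ and on the indices of $I^\nu_0$ with $\dlam^\nu_i=0$ (where $\lam^\nu_i(t)=0$), set $b^\nu_i(t)=-\left(g^\nu_i(x(t))\right)_+$, which is again $o(t)$ because either $g^\nu_i(x(t))\to g^\nu_i(\bx)<0$ (so its positive part is eventually $0$) or $\J_x g^\nu_i(\bx)\dx=0$ (so it is $o(t)$). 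One verifies directly that $(x(t),\lam(t))$ solves \eqref{def-semistable} with this $(a(t),b(t))$, $\|a(t)\|+\|b(t)\|=o(t)$, while $\|x(t)-\bx\|+\|\lam(t)-\blam\|=t(\|\dx\|+\|\dlam\|)$; for $t$ small enough this contradicts the semistability estimate $\|x(t)-\bx\|+\|\lam(t)-\blam\|\le c_2(\|a(t)\|+\|b(t)\|)$.

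The most delicate point is the componentwise construction of $b(t)$ in the second part: one must check that on every index it is $o(t)$ rather than merely $O(t)$, since an $O(t)$ term would destroy the contradiction. This forces a case split on $I^\nu_0$ according to whether $\dlam^\nu_i>0$ or $-\J_x g^\nu_i(\bx)\dx>0$ — both cannot be strict, by the complementarity in \eqref{unique}, and the doubly-degenerate case is absorbed by the positive-part construction. The analogous bookkeeping in the first part — tracking which index set is active at $(x^k,\lam^k)$ for large $k$ and passing to limits in the normalized complementarity relations — is routine but must be carried out term by term.
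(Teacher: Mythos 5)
Your proof is correct, but it follows a genuinely different route from the paper's. The paper outsources the main step to an external characterization of calmness for polyhedral variational inequalities (the corollary it cites from \cite{dontchev2020characterizations}): semistability of $(\bx,\blam)$ is equivalent to $(\bx,\blam)$ being a \emph{locally unique} solution of the KKT system linearized at $(\bx,\blam)$, and the proof then only has to pass between local uniqueness of that inhomogeneous mixed LCP and uniqueness of the trivial solution of the homogeneous system \eqref{unique}, which it does by noting that the solution set of \eqref{unique} is a cone (a nonzero solution scales into any neighborhood of the origin) and that, for solutions near $(\bx,\blam)$, the active-set structure collapses onto \eqref{unique}. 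You instead work directly from the definition of semistability: a normalized-sequence/compactness argument with Taylor expansions and index-set bookkeeping for the ``uniqueness $\Rightarrow$ semistability'' direction, and an explicit perturbation path $(\bx+t\dx,\blam+t\dlam)$ with hand-built right-hand sides $(a(t),b(t))=o(t)$ for the converse. Your version is self-contained (it does not invoke the external calmness result and in effect reproves the relevant special case of it), at the cost of carrying the expansions explicitly; the paper's is shorter given the citation. One small imprecision in your necessity step: for $i\in I^\nu_0$ with $\dlam^\nu_i=0$ the relevant dichotomy is $\J_x g^\nu_i(\bx)\dx<0$ (so the positive part vanishes for small $t$) versus $\J_x g^\nu_i(\bx)\dx=0$ (so the value is $o(t)$); the clause ``$g^\nu_i(x(t))\to g^\nu_i(\bx)<0$'' applies only to $i\in I^\nu_-$, since $g^\nu_i(\bx)=0$ on $I^\nu_0$. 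The construction $b^\nu_i(t)=-\bigl(g^\nu_i(x(t))\bigr)_+$ covers all of these cases anyway, so this does not affect correctness.
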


\begin{proof}
    By \cite[Corollary 2.3]{dontchev1997characterizations}, the KKT pair $(\bx,\blam)$ is semistable if and only if $(x,\lambda) = (\bx,\blam)$ is a locally unique solution to the mixed LCP:
    \begin{equation}\label{locally-unique}
    \begin{array}{c}
           F(\bx,\blam) + \J_x F(\bx,\blam) (x-\bx) + E(\bx)(\lam - \blam) = 0,\\
          0 \le \lam \perp -G(\bx) - \J_x G(\bx)(x - \bx) \ge 0.
    \end{array}
    \end{equation}
    We now show that this is equivalent to \eqref{unique} admitting $(\dx,\dlam) = (0,0)$ as a unique solution. 
    
    Assume that $ (\bx,\blam)$ is a locally unique solution of \eqref{locally-unique}. If $(0,0)$ is not a unique solution to \eqref{unique}, there exists a sequence of nontrivial solutions $\{(\dx^k,\dlam^k)\}$ of \eqref{unique} converging to $(0,0)$. The sequence $\{(\bx+\dx^k,\blam + \dlam^k)\}$ then solves \eqref{locally-unique} and converges to $(\bx,\blam)$, contradicting the local uniqueness of $(\bx,\blam)$. Conversely, assume that $(\dx,\dlam)=(0,0)$ solves \eqref{unique} uniquely.
    If $(\bx,\blam)$ is not a locally unique solution, then there exists a sequence of solutions $\{(x^k,\lam^k)\}$ converging to $(\bx,\blam)$. For sufficiently large $k$, the complementarity conditions imply that
    \begin{equation*}
        \left\{\begin{array}{lll}
               -g^\nu_i(\bx) - \J_x g^\nu_i(\bx) (x^k-\bx) = 0 &\quad \text{for all}\ \nu\in[N], i\in I^\nu_+;  \\
              0 \le \lam^{k,\nu}_i \perp -g^\nu_i (\bx) - \J_x g^\nu_i (\bx) (x^k-\bx) \ge 0 &\quad \text{for all}\ \nu\in[N], i\in I^\nu_0;\\
              \lam^{k,\nu}_i = 0 &\quad \text{for all}\ \nu\in[N], i\in I^\nu_-.
        \end{array}\right.
    \end{equation*}
    Let $(\dx^k,\dlam^k) = (x^k-\bx,\lam^k-\blam)$. Using 
    \begin{equation*}
        \left\{\begin{array}{lll}
              g^\nu_i(\bx) = 0 &\quad \text{for all}\ \nu\in[N], i\in I^\nu_+ \cup I^\nu_0;   \\
              \blam^\nu_i = 0 &\quad \text{for all}\ \nu\in[N], i\in I^\nu_0 \cup I^\nu_-,
        \end{array}\right.
    \end{equation*}
     it follows that $\{(\dx^k,\dlam^k)\}$ solves \eqref{unique} and converges to $(0,0)$, which contradicts the uniqueness of the trivial solution.
\end{proof}

\begin{remark}
When $\bx$ is a local minimizer of a nonlinear programming problem, the conditions in Theorem~\ref{semistable} reduce to the second-order sufficient condition (SOSC) and the strict Mangasarian--Fromovitz constraint qualification (SMFCQ).
\end{remark}

The exact characterization of semistable KKT pairs implies that the $(\alpha,\beta)$-monotonicity condition with the SMFCQ for the constraint system $G(x) \le 0$ is a sufficient condition for semistability, as stated by the following corollary.
Consequently, Theorem~\ref{thm:local quadratic convergence} ensures the local quadratic convergence of the SLCP method under these two conditions.

\begin{corollary}\label{coro:monotone-implies-semistable}
 Let $(\bx,\blam)$ be a KKT pair of \eqref{GNEP}. Suppose that the following conditions hold:
 \begin{enumerate}
     \item The KKT system~\eqref{KKT} satisfies the $(\alpha,\beta)$-monotonicity condition at $(\bx,\blam)$ for some $\alpha,\beta>0$;
     \item The strict Mangasarian--Fromovitz constraint qualification holds for the constraint system $G(x) \le 0$ at $(\bx,\blam)$. 
 \end{enumerate}
 Then the KKT pair $(\bx,\blam)$ is semistable.
\end{corollary}
\begin{proof}
 We proceed by contradiction. Assume that $(\bx,\blam)$ is not semistable. Then by Theorem~\ref{semistable}, there exists a nontrivial solution $(\dx,\dlam)$ of \eqref{unique}. This combined with the $(\alpha,\beta)$-monotonicity condition implies that 
 $$
         0 =(\dlam)^\top  \J_x G(\bx) \J_x F(\bx,\blam)^{-1} E(\bx) \dlam \ge \beta \| \J_x G(\bx)^\top \dlam \|^2 .
 $$
 It follows that $\J_x G(\bx)^\top \dlam = 0$.  Thus, there exists nonzero $\dlam \in \R^m$ such that $\dlam^\nu_i\ge 0$ for all $\nu \in [N]$, $i\in I^\nu_0$, and
 $$
        \sum\limits_{\nu\in [N]} \left(\sum\limits_{i\in I^\nu_+}  \dlam^\nu_i \nabla_x g^\nu_i(\bx) + \sum\limits_{i\in I^\nu_0} \dlam^\nu_i \nabla_x g^\nu_i(\bx) \right) =0.
 $$
 By the theorems of alternative~\cite{Mangasarian1969}, the SMFCQ fails for the constraint $G(x) \le 0$, which contradicts the assumption.
\end{proof}

Semistability also implies an error bound, which measures the distance between an iterate and a semistable solution $(\bx,\blam)$ when the KKT residual is sufficiently small. This property provides a practical criterion for assessing whether the iterates achieve local quadratic convergence according to the KKT residual in the numerical experiments.

\begin{lemma}\label{lemma:error-bound}
    Suppose that $(\bx,\blam)$ is a semistable solution of \eqref{KKT}. Then there exist positive constants $c>0$ and $\delta>0$ such that, for any $(x,\lam)$ satisfying $\Vert x - \bx \Vert + \Vert \lam - \blam \Vert \le \delta$,  the following error bound holds:
    \begin{equation}\label{error-bound}
        \Vert x - \bx \Vert + \Vert \lam - \blam \Vert \le c \left(\Vert F(x,\lam) \Vert + \Vert \min\left(\lam,-G(x) \right)\Vert \right).
    \end{equation}
\end{lemma}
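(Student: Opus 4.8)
The plan is to realize a point near $(\bx,\blam)$ as an \emph{exact} solution of a perturbed KKT system whose perturbation is controlled by the natural residual $r(x,\lambda):=\Vert F(x,\lambda)\Vert + \Vert\min(\lambda,-G(x))\Vert$, and then to invoke the semistability estimate. The point $(x,\lambda)$ itself cannot be used directly: if $g^\nu_i$ is strictly inactive ($i\in I^\nu_-$) while $\lambda^\nu_i>0$ is tiny, then forcing complementarity by pushing $g^\nu_i(x)$ up to $0$ costs a perturbation of order $|g^\nu_i(\bx)|$, which stays bounded away from $0$ rather than being controlled by $r(x,\lambda)$. So I would first pass to a modified pair $(\hx,\hat\lambda):=(x,\hat\lambda)$ defined componentwise: for each $\nu$ and $i$ (using $\lambda^\nu_i\ge 0$, which holds for the iterates of Algorithm~\ref{algslcp}; the general case is analogous), if $-g^\nu_i(x)\ge\lambda^\nu_i$ set $\hat\lambda^\nu_i:=0$ and leave $g^\nu_i(x)$ unchanged, whereas if $-g^\nu_i(x)<\lambda^\nu_i$ keep $\hat\lambda^\nu_i:=\lambda^\nu_i$ and replace the constraint value $g^\nu_i(x)$ by $0$.

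Writing $\hat G$ for the resulting perturbed vector of constraint values, $b:=\hat G-G(x)$, and $a:=F(x,\hat\lambda)$, one checks directly that $\hat\lambda\ge 0$, $\hat G\le 0$ and $\hat\lambda^\top\hat G=0$, so that $(\hx,\hat\lambda)$ solves the perturbed system \eqref{def-semistable} with right-hand side $(a,b)$. The elementary case-by-case estimates $|\hat\lambda^\nu_i-\lambda^\nu_i|\le|\min(\lambda^\nu_i,-g^\nu_i(x))|$ and $|b^\nu_i|\le|\min(\lambda^\nu_i,-g^\nu_i(x))|$ then hold for every index (with one of the two corrections vanishing in each branch), and summing yields $\Vert\hat\lambda-\lambda\Vert\le\Vert\min(\lambda,-G(x))\Vert$ and $\Vert b\Vert\le\Vert\min(\lambda,-G(x))\Vert$. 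Since $F$ is affine in $\lambda$, with $F(x,\hat\lambda)-F(x,\lambda)=E(x)(\hat\lambda-\lambda)$ and $\Vert E(\cdot)\Vert$ bounded on a neighborhood of $\bx$, we also obtain $\Vert a\Vert\le\Vert F(x,\lambda)\Vert + \Vert E(x)\Vert\,\Vert\hat\lambda-\lambda\Vert\le C\,r(x,\lambda)$ there, hence $\Vert a\Vert + \Vert b\Vert\le C'\,r(x,\lambda)$.

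It then remains to apply semistability. Because $F(\bx,\blam)=0$ and $\min(\blam,-G(\bx))=0$, the residual $r(x,\lambda)\to 0$ as $(x,\lambda)\to(\bx,\blam)$; shrinking $\delta$ we get $\Vert\hx-\bx\Vert=\Vert x-\bx\Vert\le\delta$ and $\Vert\hat\lambda-\blam\Vert\le\Vert\lambda-\blam\Vert + \Vert\hat\lambda-\lambda\Vert\le\delta + r(x,\lambda)$, so $\Vert\hx-\bx\Vert + \Vert\hat\lambda-\blam\Vert$ lies within the semistability radius $c_1$. Semistability gives $\Vert\hx-\bx\Vert + \Vert\hat\lambda-\blam\Vert\le c_2(\Vert a\Vert + \Vert b\Vert)\le c_2C'\,r(x,\lambda)$, and then a triangle inequality
$$
\Vert x-\bx\Vert + \Vert\lambda-\blam\Vert\le\bigl(\Vert\hx-\bx\Vert + \Vert\hat\lambda-\blam\Vert\bigr) + \Vert\hat\lambda-\lambda\Vert\le(c_2C'+1)\,r(x,\lambda)
$$
yields \eqref{error-bound} with $c:=c_2C'+1$.

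The only genuine work is the componentwise bookkeeping in the second paragraph — verifying that neither the multiplier correction nor the constraint correction exceeds $|\min(\lambda^\nu_i,-g^\nu_i(x))|$, uniformly and without any smallness hypothesis, and in particular that the ``strictly inactive, tiny positive multiplier'' case is absorbed by the branch $\hat\lambda^\nu_i:=0$ at zero cost. Everything else is the standard ``perturb to an exact solution, apply the stability bound, triangulate'' scheme, together with continuity of the residual to stay inside the semistability neighborhood.
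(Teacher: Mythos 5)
Your proof is correct, but it takes a genuinely different route from the paper. The paper argues by contradiction through its own characterization of semistability (Theorem~\ref{semistable}): assuming the bound fails, it extracts a sequence whose residuals are $o(\Vert x^k-\bx\Vert+\Vert\lam^k-\blam\Vert)$, normalizes, passes to a limit direction $(\dx,\dlam)\neq(0,0)$, and shows via Taylor expansion that this direction solves the mixed LCP \eqref{unique}, contradicting uniqueness of the trivial solution. You instead work directly from the metric definition \eqref{def-semistable}: you shift $(x,\lam)$ to an exact solution $(x,\hat\lambda)$ of a perturbed KKT system whose perturbation $(a,b)$ is bounded by the natural residual, and your componentwise bookkeeping (each of $|\hat\lambda^\nu_i-\lam^\nu_i|$ and $|b^\nu_i|$ bounded by $|\min(\lam^\nu_i,-g^\nu_i(x))|$, with $F$ affine in $\lam$) is verified correctly. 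Your approach buys an explicit constant $c=c_2C'+1$ in terms of the semistability constants and avoids any appeal to the LCP characterization; the paper's approach avoids the case analysis at the price of a non-constructive compactness argument. One small caveat: the lemma as stated does not restrict $\lam$ to $\R^m_+$, and your parenthetical that the general case is "analogous" is true but slightly glosses over the fact that when $\lam^\nu_i<0$ and $g^\nu_i(x)>0$ both corrections are nonzero in the same component; each is still bounded by $|\min(\lam^\nu_i,-g^\nu_i(x))|=\max(-\lam^\nu_i,\,g^\nu_i(x))$, so the estimate survives with an extra factor of $2$, but this should be said rather than waved at.
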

\begin{proof}
    We proceed by contradiction. Assume that \eqref{error-bound} fails. Then there exists a sequence $\{(x^k,\lam^k) \}$ converging to $(\bx,\blam)$ such that
    \begin{equation}\label{F-lambda-G-order}
    \begin{aligned}
        \Vert F(x^k,\lam^k) \Vert& = o\left(\Vert x^k-\bx \Vert + \Vert \lam^k - \blam \Vert\right),\\
        \Vert \min ( \lam^k, -G(x^k) ) \Vert& = o\left(\Vert x^k - \bx \Vert + \Vert \lam^k - \blam \Vert \right).
    \end{aligned}
    \end{equation}
    By the Taylor expansion around $(\bx,\blam)$,
    \begin{equation}\label{error-bound-F}
        \begin{aligned}
            F(x^k,\lam^k) - F(\bx,\blam) &= 
            F(x^k,\lam^k) - F(x^k,\blam) + F(x^k,\blam) - F(\bx,\blam)\\
            & = E(x^k) \left( \lam^k - \blam  \right) + \J_x F(\bx,\blam)(x^k - \bx) + o\left( \Vert x^k - \bx \Vert \right)\\
            & = E(\bx) \left( \lam^k - \blam  \right) + \J_x F(\bx,\blam)(x^k - \bx) + o\left( \Vert x^k - \bx \Vert \right).
        \end{aligned}
    \end{equation}
    Dividing both sides of \eqref{error-bound-F} by $\Vert x^k-\bx\Vert + \Vert \lam^k - \blam \Vert$ and letting $k \rightarrow \infty$, we obtain a nontrivial limit $(\dx,\dlam)$ (after taking a subsequence if necessary) with
    $$
        \dx = \lim\limits_{k\rightarrow \infty} \frac{x^k-\bx}{\Vert x^k-\bx\Vert + \Vert \lam^k - \blam\Vert}\quad \text{and } 
        \dlam = \lim\limits_{k\rightarrow \infty} \frac{\lam^k-\blam}{\Vert x^k-\bx\Vert + \Vert \lam^k - \blam\Vert}
    $$
    such that
    \begin{equation}\label{error-bound-1-eq}
        \J_x F(\bx,\blam) \dx + E(\bx) \dlam = 0.
    \end{equation}
    Since $(x^k,\lam^k)\rightarrow (\bx,\blam)$, \eqref{F-lambda-G-order} implies that, for sufficiently large $k$,
    \begin{equation}\label{G-lam-order-I1-I3}
        \left\{\begin{array}{lll}
              |g^\nu_i(x^k) - g^\nu_i(\bx)| = o\left(\Vert x^k-\bx \Vert + \Vert \lam^k - \blam \Vert\right) & \text{for all}\ \nu\in[N], i\in I^\nu_+; \\
              |\lam^{k,\nu}_i| = o\left(\Vert x^k-\bx \Vert + \Vert \lam^k - \blam \Vert\right) & \text{for all}\ \nu\in[N], i\in I^\nu_-,
        \end{array}\right.
    \end{equation}
    and for all $\nu\in[N]$, $i\in I^\nu_0$,
    \begin{equation}\label{G-lam-order-I2}
    \begin{aligned}
         {\rm either}\ |\lam^{k,\nu}_i | & = o\left(\Vert x^k-\bx \Vert + \Vert \lam^k - \blam \Vert\right),\ g^\nu_i(x^k) \le 0,  \\
          {\rm or}\ | g^\nu_i(x^k) - g^\nu_i(\bx) | &= o\left(\Vert x^k-\bx \Vert + \Vert \lam^k - \blam \Vert\right),\ \lam^{k,\nu}_i \ge 0.
    \end{aligned}
    \end{equation}
    Dividing both sides of \eqref{G-lam-order-I1-I3} and \eqref{G-lam-order-I2} by $\Vert x^k-\bx\Vert + \Vert \lam^k - \blam \Vert$ and letting $k \rightarrow \infty$, we obtain  
    \begin{equation*}
        \left\{\begin{array}{lll}
               \J_x g^\nu_i(\bx) \dx = 0 & \text{for all}\ \nu\in[N],i\in I^\nu_+;\\
              0 \le \dlam^\nu_i \perp -\J_x g^\nu_i(\bx)\dx \ge 0 & \text{for all}\ \nu\in[N],i\in I^\nu_0;\\
              \dlam^\nu_i = 0 &  \text{for all}\ \nu\in[N],i\in I^\nu_-.
        \end{array}\right.
    \end{equation*}
    This together with \eqref{error-bound-1-eq} shows that $ (\dx,\dlam) \neq (0,0)$ solves the mixed LCP \eqref{unique},
    which contradicts the semistability of $(\bx,\blam)$ by Theorem~\ref{semistable}. The proof is complete.
\end{proof}

For an NLP problem, the SMFCQ and SOSC at $(\bx,\blam)$ imply both semistability and hemistability \cite{bonnans1994local}, which still remain the weakest conditions guaranteeing the local quadratic convergence of SQP methods \cite{izmailov2015newton}. This follows from the fact that a semistable KKT pair $(\bx,\blam)$ with $\bx$  being a local minimizer of the NLP is also hemistable \cite{bonnans1994local}. However, the implication fails to hold for a GNEP.

\begin{example}
Consider the following generalized Nash equilibrium problem:
\begin{equation*}\label{exm-2}
\begin{array}{llll}
     \begin{array}{llll}
			{\rm Player\ 1} \quad &\quad \min\limits_{x^1} & \frac{1}{12}(x^1)^4 + x^1x^2 \\
			
			&\quad {\rm s.t.} & x^1 \ge 0,\\
			\\
			{\rm Player\ 2} \quad &\quad \min\limits_{x^2} & \frac{1}{2}(x^1+x^2-x^3)^2\\
	    \end{array} 
	    & \quad
       \begin{array}{lll}
            {\rm Player\ 3} \quad &\quad \min\limits_{x^3} & x^3 x^4 \\
			&\quad {\rm s.t.} & x^3 \ge 0,\\
			\\
			{\rm Player\ 4} \quad &\quad \min\limits_{x^4} & \frac{1}{2}(x^3+x^4-1)^2.\\ 
       \end{array}
\end{array}
\end{equation*}

The linearized KKT system at $(\tx,\tlam)$ is \begin{equation}\label{exm-2-KKT}
    \left\{\begin{array}{ll}
           \begin{pmatrix}
             (\tx^1)^2 & 1 & 0 & 0\\
             1 & 1 & -1 & 0\\
             0 & 0 & 0 & 1\\
             0 & 0 & 1 & 1
         \end{pmatrix}
         \begin{pmatrix}
         x^1\\
         x^2\\
         x^3\\
         x^4
         \end{pmatrix}
         + 
         \begin{pmatrix}
         -1 & 0\\
         0 & 0\\
         0 & -1\\
         0 & 0
         \end{pmatrix}
         \begin{pmatrix}
         \lam^1\\
         \lam^3
         \end{pmatrix}
         =
         \begin{pmatrix}
         \frac{2}{3}(\tx^1)^3\\
         0\\
         0\\
         1
         \end{pmatrix}\\
          0 \le 
         \begin{pmatrix}
             \lam^1\\
             \lam^3
         \end{pmatrix}
         \perp 
         \begin{pmatrix}
             1 & 0 & 0 & 0\\
             0 & 0 & 1 & 0
         \end{pmatrix}
         \begin{pmatrix}
             x^1\\
             x^2\\
             x^3\\
             x^4
         \end{pmatrix} \ge 0.
    \end{array}\right.
\end{equation}
From Theorem~\ref{semistable}, $(\bx,\blam) = (0,0,0,1,0,1)^\top$ is a semistable solution of \eqref{exm-2-KKT} where $\bx = (0,0,0,1)^\top$ is a GNE. It can be verified that the linearized KKT system  \eqref{exm-2-KKT} at any point $(\tx,\tlam) \in \R^4 \times \R^2$ admits at least one solution. However, for any $0< \tx^1 < 1$, \eqref{exm-2-KKT} has no solution in a neighborhood of $(\bx,\blam)$, implying that $(\bx,\blam)$ is not hemistable. 
\end{example}

The next theorem provides an exact characterization of the strong regularity that guarantees both the semistability and hemistability. By Theorem~\ref{thm:local quadratic convergence}, the SLCP method converges quadratically to a KKT pair $(\bx,\blam)$ at which the KKT system is strongly regular.

 Define the index set associated with the fixed point $(\bx,\blam)$ as
\begin{equation}\label{index-partition-set}
    \I = \left\{ \left(J_+,J_0,J_-\right) \, \left|  \  
    \begin{array}{ll}
           J_+ = \prod\limits_{\nu\in[N]} J^\nu_+, J_0 = \prod\limits_{\nu\in[N]} J^\nu_0, J_- = \prod\limits_{\nu\in[N]} J^\nu_-, \\ 
          J^\nu_+ \cup J^\nu_0 \cup J^\nu_- = [m_\nu],\ I^\nu_+ \subset J^\nu_+ \subset I^\nu_+ \cup I^\nu_0, \\
          \text{and }I^\nu_- \subset J^\nu_- \subset I^\nu_0 \cup I^\nu_-\ \text{ for all } \nu \in [N]
    \end{array}
    \right. \right\}.
\end{equation}

\begin{theorem}\label{thm:strong-regularity}
 The KKT system~\eqref{KKT} is strongly regular at $(\bx,\blam)$ if and only if, for any $(J_+,J_0,J_-)\in \I$, the following mixed LCP associated with $(J_+,J_0,J_-)$ admits $(\dx,\dlam) = (0,0)$ as its unique solution: 
 \begin{equation}\label{strong metric regularity-judge}
		\left\{\begin{array}{ll}
				\J_x F(\bx,\blam)^\top \dx - \J_x G(\bx)^\top \dlam  = 0, & \\
				\grad_{x^\nu} g^\nu_i (\bx)^\top \dx^\nu =0 & \text{for all}\ \nu \in[N], i\in J^\nu_+; \\
				\dlam^\nu_i \le 0, \grad_{x^\nu} g^\nu_i (\bx)^\top \dx^\nu \ge 0 & \text{for all}\ \nu\in [N], i\in J^\nu_0;\\
				\dlam^\nu_i = 0 & \text{for all}\ \nu\in [N], i\in J^\nu_-.
			\end{array}\right.
		\end{equation}
\end{theorem}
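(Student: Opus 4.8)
The plan is to reduce strong regularity of the perturbed linearized subproblem~\eqref{def-perturbated-LC-subproblem} to a \emph{metric regularity} statement, to evaluate the Mordukhovich coderivative criterion for that metric regularity, and finally to match the resulting conditions with the family~\eqref{strong metric regularity-judge} indexed by $\I$. Write $\Psi(x,\lam):=\bigl(\J_xF(\bx,\blam)(x-\bx)+E(\bx)(\lam-\blam)+F(\bx,\blam),\,-G(\bx)-\J_xG(\bx)(x-\bx)\bigr)$, an affine map with constant Jacobian $A:=\begin{pmatrix}\J_xF(\bx,\blam) & E(\bx)\\ -\J_xG(\bx) & 0\end{pmatrix}$, so that~\eqref{def-perturbated-LC-subproblem} is the generalized equation $(a,b)\in M(x,\lam):=\Psi(x,\lam)+\N_{\R^n\times\R^m_+}(x,\lam)$ with $\S=M^{-1}$. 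For an affine generalized equation over a polyhedral convex set, strong regularity in the sense of Definition~\ref{def-strong regularity} (a single-valued Lipschitz localization of $\S$ at $((0,0),(\bx,\blam))$) is equivalent to metric regularity of $M$ at $((\bx,\blam),(0,0))$; see Robinson~\cite{robinson1980strongly} and the characterizations in~\cite{dontchev2020characterizations}. Since $M$ is $\Psi$ plus a normal-cone map, the Mordukhovich criterion~\cite[Theorem~9.43]{rockafellar2009variational} states that $M$ is metrically regular there if and only if the only $(\dx,\dlam)$ satisfying $0\in A^\top(\dx,\dlam)+D^*\N_{\R^n\times\R^m_+}\bigl((\bx,\blam),(0,G(\bx))\bigr)(\dx,\dlam)$ is $(\dx,\dlam)=(0,0)$, using that $-\Psi(\bx,\blam)=(0,G(\bx))\in\N_{\R^n\times\R^m_+}(\bx,\blam)$ by the KKT relation.

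The next step is to compute the coderivative on the right, exploiting the product and componentwise structure of $\N_{\R^n\times\R^m_+}$. Since $A^\top=\begin{pmatrix}\J_xF(\bx,\blam)^\top & -\J_xG(\bx)^\top\\ E(\bx)^\top & 0\end{pmatrix}$ and $D^*\N_{\R^n}\equiv\{0\}$, the $\R^n$-part of the criterion forces the stationarity equation $\J_xF(\bx,\blam)^\top\dx-\J_xG(\bx)^\top\dlam=0$ — note the \emph{full} Jacobian $\J_xG(\bx)^\top$ here. For the $\R^m_+$-part, write $s^\nu_i:=\grad_{x^\nu}g^\nu_i(\bx)^\top\dx^\nu$, which is exactly the $(\nu,i)$-entry of $E(\bx)^\top\dx$; the criterion then requires $-s^\nu_i\in D^*\N_{\R_+}\bigl(\blam^\nu_i,G(\bx)^\nu_i\bigr)(\dlam^\nu_i)$ for every $\nu,i$. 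For $i\in I^\nu_+$ the base point lies in the smooth part of $\mathrm{gph}\,\N_{\R_+}$, forcing $s^\nu_i=0$ with $\dlam^\nu_i$ free; for $i\in I^\nu_-$ it forces $\dlam^\nu_i=0$ with $s^\nu_i$ free; for $i\in I^\nu_0$ the base point is the kink $(0,0)$, and the standard limiting normal cone to $\mathrm{gph}\,\N_{\R_+}$ there gives exactly the three branches $\{\dlam^\nu_i>0,\ s^\nu_i=0\}$, $\{\dlam^\nu_i<0,\ s^\nu_i\ge0\}$, and $\{\dlam^\nu_i=0,\ s^\nu_i\ \text{free}\}$.

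Finally I would assemble the conditions. The coderivative criterion asks that no nonzero $(\dx,\dlam)$ satisfy the stationarity equation together with the componentwise branch constraints above. Distributing, over the indices $i\in I^\nu_0$, the union of the three kink-branches — and noting that for $i\in I^\nu_+$ (resp.\ $i\in I^\nu_-$) every admissible choice keeps $i$ inside $J^\nu_+$ (resp.\ $J^\nu_-$) — a short combinatorial check shows that this union of systems is set-wise the \emph{same} as the union, over all $(J_+,J_0,J_-)\in\I$, of the systems~\eqref{strong metric regularity-judge}; in particular the index ranges $I^\nu_+\subseteq J^\nu_+\subseteq I^\nu_+\cup I^\nu_0$ and $I^\nu_-\subseteq J^\nu_-\subseteq I^\nu_0\cup I^\nu_-$ defining $\I$ in~\eqref{index-partition-set} are precisely the ones produced by this distribution, and the $J^\nu_0$-rows carry only the two sign inequalities, no complementarity, matching~\eqref{strong metric regularity-judge}. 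Hence the coderivative criterion holds if and only if~\eqref{strong metric regularity-judge} has $(\dx,\dlam)=(0,0)$ as its only solution for every element of $\I$, which combined with the equivalences of the first paragraph proves the theorem. As a consistency check, when the GNEP reduces to an NLP the matrix $\J_xF(\bx,\blam)$ is the symmetric Lagrangian Hessian, the transpose disappears, and~\eqref{strong metric regularity-judge} collapses to nonsingularity of the bordered-Hessian matrices over all active-set extensions, recovering Robinson's strong-regularity characterization for nonlinear programming.

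The step I expect to be the main obstacle is the coderivative bookkeeping under the GNEP block structure: one must carefully keep $A^\top=\begin{pmatrix}\J_xF(\bx,\blam)^\top & -\J_xG(\bx)^\top\\ E(\bx)^\top & 0\end{pmatrix}$, recognize that $E(\bx)^\top\dx$ returns the \emph{partial} gradients $\grad_{x^\nu}g^\nu_i(\bx)^\top\dx^\nu$ while the $\dlam$-combination in the stationarity row uses the \emph{full} Jacobian $\J_xG(\bx)^\top$, and verify that the kink coderivative at $i\in I^\nu_0$ generates exactly the three options corresponding to placing $i$ in $J^\nu_+$, $J^\nu_0$, or $J^\nu_-$, with the $J^\nu_0$-branch carrying no complementarity. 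A secondary point to handle with care is the equivalence ``metric regularity $\Leftrightarrow$ strong regularity'' for the affine polyhedral map $M$; if one prefers not to quote it, strong regularity can be obtained directly — the $\I$-condition, via the (reversible) coderivative computation, yields the Aubin property of $\S$ at $((0,0),(\bx,\blam))$ and hence a Lipschitz modulus, while the same condition rules out two distinct solution branches of $\S$ meeting a neighborhood of $(\bx,\blam)$, giving the single-valued Lipschitz localization required by Definition~\ref{def-strong regularity}.
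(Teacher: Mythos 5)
Your proposal is correct and follows essentially the same route as the paper: both pass from strong regularity to a Lipschitz-type property of the polyhedral variational inequality (your metric regularity of the forward map $M$ is the same condition as the paper's Aubin property of the inverse $\L$, via Dontchev--Rockafellar), then apply the Mordukhovich coderivative criterion and compute the limiting normal cone to $\mathrm{gph}\,\N_{\R^m_+}$ componentwise. Your branch analysis at the degenerate indices $i\in I^\nu_0$ (three branches, with the $J^\nu_0$-rows carrying sign conditions but no complementarity) correctly reproduces the distribution over $\I$, and in fact supplies the componentwise bookkeeping that the paper's proof delegates to external references.
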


\begin{proof}
    Denote by $\L : \R^{n+m} \rightrightarrows \R^{n+m}$ the inverse of the set-valued mapping 
    \begin{equation*}
        \begin{pmatrix}
            F(\bx,\blam)\\
            -G(\bx)
        \end{pmatrix}
        + 
        \begin{pmatrix}
            \J_x F(\bx,\blam) & E(\bx)\\
            -\J_x G(\bx) & 0
        \end{pmatrix}
        \begin{pmatrix}
            x - \bx\\
            \lam - \blam
        \end{pmatrix}
        + 
        \N_{\R^n \times \R^m_+} (x,\lam).
    \end{equation*}
    By \cite[Theorem 1]{dontchev1996characterizations}, the strong regularity of \eqref{KKT} at $(\bx,\blam)$ is equivalent to the Aubin property \cite{aubin1984lipschitz} of $\L$ at $(0,0)$ for $(\bx,\blam)$. Using the Mordukhovich criterion \cite[Theorem 9.40]{rockafellar1998variational}, $\L$ has the Aubin property at $(0,0)$ for $(\bx,\blam)$ if and only if 
    \begin{equation}\label{prf-strong-regularity-1}
    \begin{aligned}
        &\left( \begin{pmatrix}
            - \J_x F(\bx,\blam)^\top & \J_x G(\bx)^\top\\
            -E(\bx)^\top & 0
        \end{pmatrix}
        \begin{pmatrix}
            \dx\\
            \dlam
        \end{pmatrix},
        \begin{pmatrix}
        -\dx\\
        -\dlam
        \end{pmatrix}
        \right)
        \\
        & \qquad \qquad \in 
        \N_{ {\rm gph}\, \N_{\R^n \times \R^m_+}} \left(  
        \begin{pmatrix}
        \bx\\
        \blam
        \end{pmatrix},
        \begin{pmatrix}
        -F(\bx,\blam)\\
        G(\bx)
        \end{pmatrix}
        \right)
         \implies 
        \begin{pmatrix}
        \dx\\
        \dlam
        \end{pmatrix}
        = 
        \begin{pmatrix}
            0\\
            0
        \end{pmatrix},
    \end{aligned}
    \end{equation}
    where ${\rm gph}\,\N_{\R^n \times \R^m_+} := \left\{ (y,z,u,v)\in \R^{2n+2m} \mid (u,v) \in \N_{\R^n \times \R^m_+} (y,z)  \right\}$.
    Combined with the characterization of the set-valued mapping $\N_{ {\rm gph}\, \N_{\R^n \times \R^m_+}}$ (see \cite{mordukhovich2007coderivative}), \eqref{prf-strong-regularity-1} reduces to the implication \eqref{strong metric regularity-judge}, which completes the proof.
\end{proof}

\begin{remark}
We omit the proof details and refer the interested reader to \cite{diao2025stability,dontchev2009implicit}. Although the model considered in \cite{diao2025stability} is a standard NEP, the arguments used to establish strong regularity apply directly in the present context. It is worth noting that strong regularity imposes stronger conditions on the solution than either semistability or hemistability.
\end{remark}

To illustrate Theorem~\ref{thm:strong-regularity}, Appendix~\ref{appendix:further analysis for the internet switching model} provides a detailed analysis of the strong regularity for Example~\ref{exm-internet} (see Proposition~\ref{prop:exm-strong-regularity}).




\section{Experimental results}
\label{sec:experiments}

In this section, we compare Algorithm~\ref{algslcp} with three representative algorithms. The first two algorithms, introduced in \cite{dreves2011solution}, are based on the interior-point method (IPM) proposed in \cite{monteiro1999potential} and the semismooth-like minimization method (SMM) developed in \cite{de1996semismooth,de2000theoretical,facchinei2003finite}, respectively. The third algorithm \cite{kanzow2016augmented} is based on the ALM, with further discussions presented in \cite{bueno2019optimality,jordan2023first,kanzow2018augmented,kim2023new}. We regard these algorithms as classical benchmarks. The test problems are drawn from \cite{dreves2011solution,facchinei2010penalty} and the references therein. These problems have also been tested in
\cite{dreves2011solution,kanzow2016augmented}, which allows for a direct and meaningful comparison of the numerical performance.


All computational results were obtained on a Windows 11 personal computer equipped with an Intel Ultra 7 155H processor (16 cores, 22 threads, 4.8 GHz) and 32 GB of RAM. All algorithms were implemented in MATLAB~2022a and were terminated once
$$
    \max\left\{  \Vert F(x,\lam) \Vert_{\infty}, \Vert \left( G(x) \right)_+ \Vert_{\infty}, \max_{\nu\in[N],i\in [m_\nu]} | \lam^\nu_i g^\nu_i (x) | \right\} \le {\rm tol},
$$
where ${\rm tol} = 10^{-7}$.  The time limit is set to 30 minutes. 

\subsection{Solution of the subproblems}
In practice, solving the LC subproblems constitutes the most computationally expensive component of the proposed method. Consequently, the overall efficiency of the SLCP method is strongly influenced by the choice of the algorithm for subproblems, the extent to which problem structure can be exploited, and whether the LC subproblems are solved inexactly. We adopt the following techniques.
\begin{enumerate}
    \item Practical LC subproblems often exhibit exploitable structure. For example, in many test instances, the constraint $x^\nu \in \R^{n_\nu}_{+}$ is present for all $\nu \in [N]$. In such cases, the LC subproblem at each iteration $k$ can be reformulated as a standard LCP:
\begin{equation*}
\begin{aligned}
   & 0\le \begin{pmatrix}
        p^k + x^k\\
        q^k_\alpha + \lam^k_\alpha
    \end{pmatrix} \perp 
    \begin{pmatrix}
        \J_x F(x^k,\lam^k) & E_\alpha (x^k)\\
        -\J_x G_\alpha (x^k) & 0
    \end{pmatrix}
    \begin{pmatrix}
        p^k + x^k\\
        q^k_\alpha + \lam^k_\alpha
    \end{pmatrix}\\
   & \qquad \qquad \qquad \qquad \qquad+ 
    \begin{pmatrix}
    F(x^k,\lam^k) - E(x^k)\lam^k - \J_x F(x^k,\lam^k)x^k\\
    -G_\alpha(x^k) + \J_x G_\alpha (x^k) x^k
    \end{pmatrix} \ge 0.
\end{aligned}
\end{equation*}
Here, $\alpha$ refers to the remaining constraints (i.e., those other than $x^\nu \in \mathbb{R}^{n_\nu}_+$).  
This reformulation reduces the problem dimension and therefore lowers the computational cost. 
\item For LC subproblems with $n\le 100$, we minimize the Fischer-Burmeister function \cite{fischer1995newton} using a Levenberg--Marquardt trust-region scheme \cite{nocedal2006numerical} with $\sigma \in [10^{-5},10^{-3}]$ and an update scaling factor equal to $5$. We terminate the subproblem algorithm when the residual falls below $10^{-8}$.
\item For larger LC subproblems with $n>100$, we employ an IPM similar to that used in \cite{dreves2011solution}. Moreover, we allow inexact solutions to the subproblems to further reduce the computational cost. Specifically, the subproblem algorithm is terminated when the residual falls below $\max\{ 10^{-8}, 10^{-{\rm iter}} \}$, where ${\rm iter}$ denotes the outer iteration counter.
\end{enumerate}
Since the LC subproblems are equivalent to affine GNEPs when the GNEP is player-convex, a wide range of efficient algorithms are available for their solution; see, e.g., \cite{cottle2009linear,dreves2014finding,schiro2013solution}. This suggests that the LC subproblems have considerable computational potential. In our experiments, we adopt both the Levenberg--Marquardt type method and the IPM. The IPM was used in \cite{dreves2011solution} to directly solve the original problem, and the Levenberg--Marquardt type method was employed in \cite{kanzow2016augmented} to solve the ALM subproblems. Using these methods for the LC subproblems in our framework enables a more meaningful evaluation of how linearization contributes to improved computational performance.

\subsection{Performance profiles}
 We employ performance profiles \cite{dolan2002benchmarking} to compare the numerical performance of different algorithms. Denote the set of algorithms and benchmark problems by  $\mathcal{A}$ and $\mathcal{B}$, respectively. Let $s_{a,b}$ be a performance measure, such as runtime, gradient evaluations, or Hessian evaluations. If algorithm $a$ fails to solve benchmark problem $b$, set $s_{a,b} = + \infty$.  The performance ratio is defined by
$$
    r_{a,b} = \frac{s_{a,b}}{\min\limits_{a\in \mathcal{A}} \left\{ s_{a,b} \right\}}.
$$
The proportion of problems for which algorithm $a\in \mathcal{A}$ achieves a performance ratio within a factor $\tau \in \R_+$ of the best performance is given by
$$
    \rho_{a} (\tau) = \frac{1}{| \mathcal{B} |} {\rm size}\left\{ b\in \mathcal{B} \mid r_{a,b}\le \tau \right\}.
$$
Thus, the performance profile represents the cumulative distribution function of the performance ratios for each algorithm. An algorithm with a higher value of $\rho_a (\tau)$
at a given $\tau$ is considered more efficient.

We now present performance profiles comparing the four algorithms on the test problems. The comparison focuses on the number of gradient evaluations of $\theta^\nu$ and $G$, the number of Hessian evaluations of $L^\nu$, and the total time required to reach the desired accuracy. Detailed numerical results are summarized in Figure~\ref{fig:performance_profiles}, while the complete numerical results are
listed in Tables~\ref{alg:table-time-1}--\ref{alg:table-grad-Hessian-1}. The main conclusions are as follows:
\begin{enumerate}
    \item The SLCP method successfully solves all test problems, whereas the other algorithms fail on some instances. Moreover, the SLCP method achieves the shortest runtime on approximately $85\%$ of the problems. All four methods are capable of attaining the target accuracy of $10^{-7}$
  on the majority of problems, indicating a relatively high level of accuracy. Note that, although the IPM implementation in \cite{dreves2011solution} did not aim for this level of accuracy, the method itself is capable of achieving such accuracy in principle.
  \item The SLCP method not only demonstrates high efficiency in terms of computational time, but also exhibits a significant advantage in the number of Hessian and gradient evaluations. This performance can be attributed in part to the fact that almost half of the test problems are instances of affine GNEP. For these instances, the subproblems of the SLCP method coincide exactly with the original problems, and thus the Hessian and gradient only need to be evaluated once at the beginning, with no redundant computations thereafter. This highlights the SLCP method's ability to exploit the problem structure.
Another contributing factor is related to the inherent nature of the SQP method. SQP-based solvers such as SNOPT \cite{gill2005snopt} and NLPQL \cite{schittkowski1986nlpql}, typically require fewer gradient evaluations \cite{gill2005snopt} compared to solvers based on alternative frameworks such as MINOS \cite{murtagh1978large,murtagh1982projected} and CONOPT \cite{drud1985conopt}. Since the SLCP method is essentially like SQP methods, the
fewer gradient and Hessian evaluations observed for the SLCP method can be seen as a direct consequence of its SQP-style design.
\item For the ALM, achieving high accuracy solutions typically requires increasingly accurate solutions of the subproblems as the iterates approach optimality. However, the ALM subproblems are nonlinear and not twice continuously differentiable. Solving these subproblems using the Levenberg--Marquardt type method entails significantly more subproblem gradient and Hessian evaluations, as illustrated in Figure~\ref{fig:JL}.
In contrast, the subproblems in the SLCP method are affine GNEPs, which are substantially easier to solve than the ALM subproblems, especially in high-dimensional settings. This structural simplicity leads to a significant reduction in the number of subproblem iterations, as evidenced in Table~\ref{alg:table-grad-Hessian-1}.

\end{enumerate}

\begin{figure}[htbp!]
  \centering

  \begin{subfigure}[b]{0.42\textwidth}
    \centering
    \includegraphics[width=\textwidth]{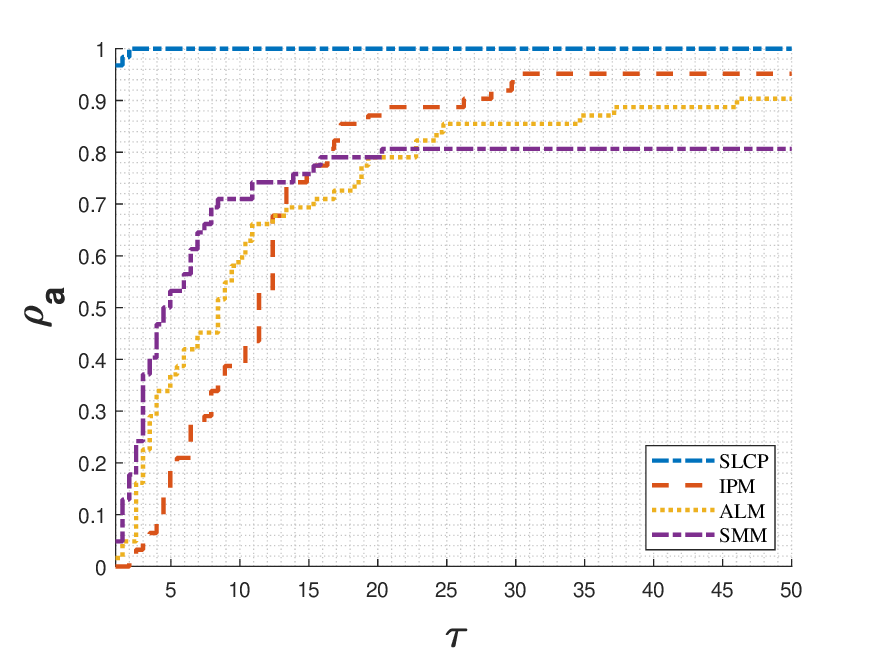}
    \caption{Evaluations of $\J_{x^\nu} \theta^\nu$ and $\J_x g$}
    \label{fig:grad}
  \end{subfigure}
  \hfill
  \begin{subfigure}[b]{0.42\textwidth}
    \centering
    \includegraphics[width=\textwidth]{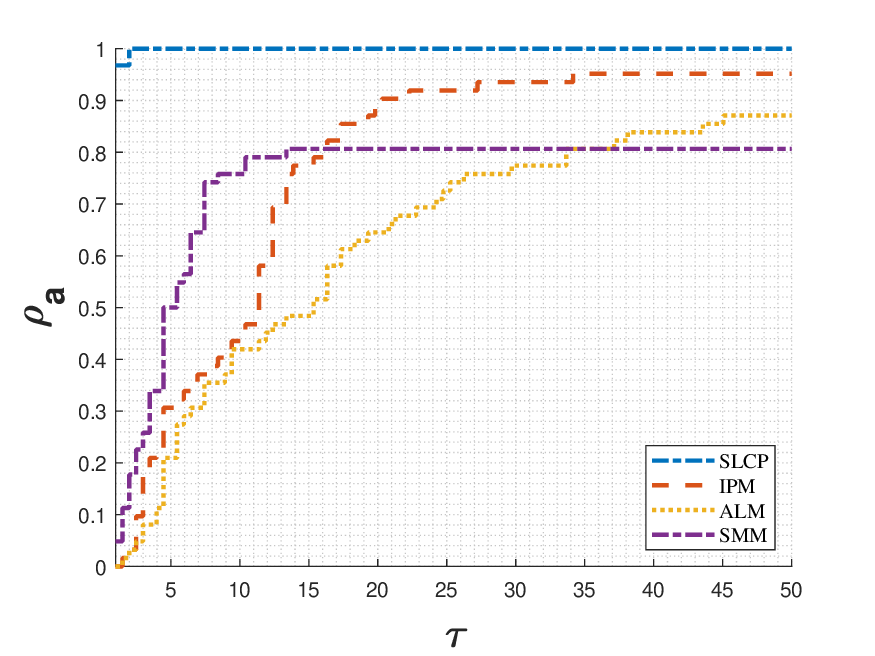}
    \caption{Evaluations of $\J_x F$}
    \label{fig:JL}
  \end{subfigure}
  
  \begin{subfigure}[b]{0.42\textwidth}
    \centering
    \includegraphics[width=\textwidth]{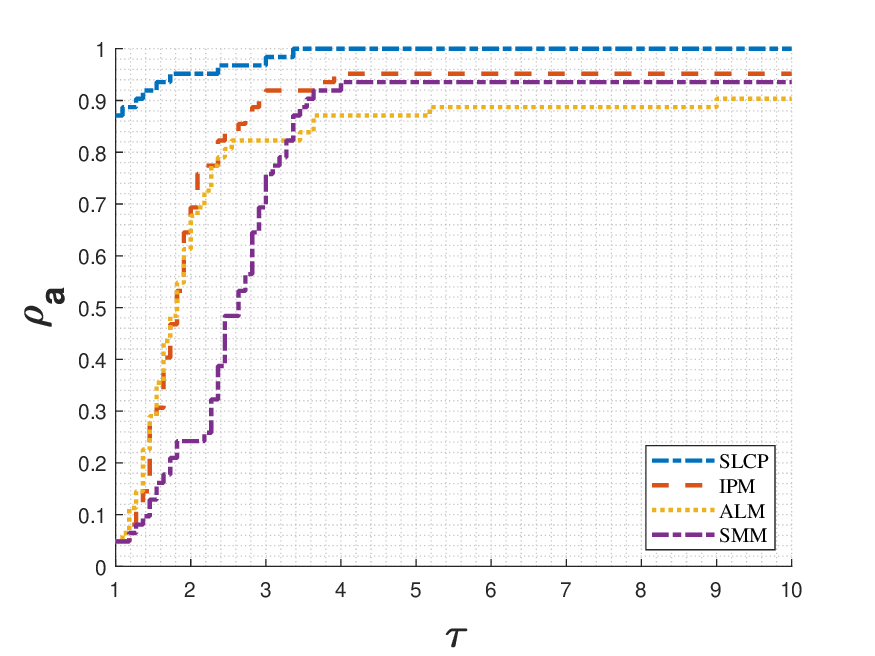}
    \caption{Runtime}
    \label{fig:time}
  \end{subfigure}

  \caption{Performance profiles of SLCP, IPM, ALM, and SMM on test problems}
  \label{fig:performance_profiles}
\end{figure}

\subsection{Convergence behavior on representative problems}
We now present the convergence results of the four algorithms on two representative problems. The first problem is the internet switching model (Problem A1 in Table~\ref{alg:table-time-1}), and the second is the classical Arrow and Debreu's competitive economy model (Problem A10a in Table~\ref{alg:table-time-2}) from \cite{arrow1954existence}.

For each problem, all four algorithms converge to the same GNE $\bx$ with $\Vert \bx \Vert_2\le 100$. We evaluate the algorithms starting from two distant initial points $10^3 \cdot \mathbf{1}_n$ and $10^6 \cdot \mathbf{1}_n$ to examine the overall convergence behavior, where $\mathbf{1}_n$ denotes the vector of ones in $\mathbb{R}^n$. Among the four, the SLCP method and the SMM are able to consistently converge from both starting points. 

As shown in Figure~\ref{fig:A1}, the SLCP method exhibits local quadratic convergence toward the KKT pair, which is consistent with our theoretical results in Proposition~\ref{prop:exm-strong-regularity} and Lemma~\ref{lemma:error-bound}. Moreover, such quadratic convergence behavior is frequently observed in practice, as illustrated in Figure~\ref{fig:A10a}. The ALM demonstrates good global convergence properties in Figure~\ref{fig:A1}, provided that its subproblems are solved with sufficient accuracy, in line with the conclusions drawn in \cite{kanzow2016augmented}. However, when the subproblem solutions are solved inaccurately, as in Figure~\ref{fig:A10a}, the ALM may fail to converge. The IPM  generally requires more iterations, and does not exhibit the same local quadratic convergence rate in the neighborhood of the solution as the SLCP method and the SMM.

\begin{figure}[htbp]
  \centering
  \begin{subfigure}[t]{0.45\textwidth}
    \centering
    \includegraphics[width=\linewidth]{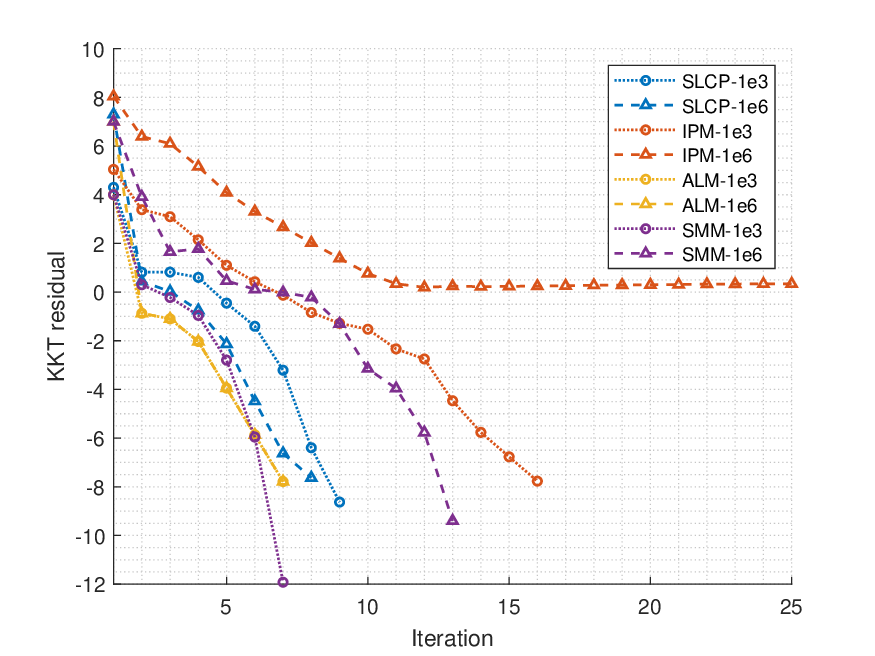}
    \caption{Internet switching model (A1)}
    \label{fig:A1}
  \end{subfigure}
  \hfill
  \begin{subfigure}[t]{0.45\textwidth}
    \centering
    \includegraphics[width=\linewidth]{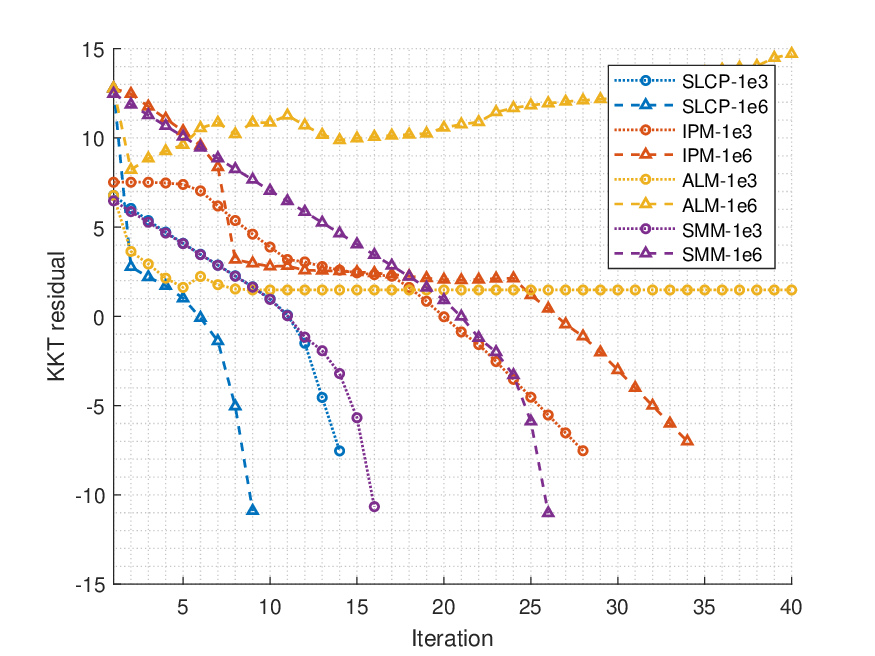}
    \caption{Arrow and Debreu's competitive economy model (A10a)}
    \label{fig:A10a}
  \end{subfigure}
  \caption{Global convergence behavior of SLCP, IPM, ALM, and SMM on two representative test instances}
  \label{fig:twoside}
\end{figure}

\section{Conclusions}
\label{sec:conclusions}

This paper has proposed a sequential linear complementarity problem  method for solving generalized Nash equilibrium problems. Under suitable assumptions, the proposed method has been shown to achieve both global convergence and local quadratic convergence. When the GNEP reduces to a standard nonlinear programming problem, the convergence conditions coincide with those of the classical SQP method with exact Lagrangian Hessians. Numerical results have demonstrated that the SLCP method outperforms the existing IPM, SMM, and ALM on the test instances.

Several promising directions for further investigation remain. Motivated by the development of modern SQP schemes, it is natural to investigate whether the SLCP subproblems can incorporate approximate Hessians instead of exact second-order information, while still maintaining global convergence. Such extensions could significantly enhance the computational efficiency of the method and are closely related to possible refinements of the novel merit function proposed in this work. Moreover, the integration of efficient solvers for the linearized subproblems, such as Lemke's method considered in \cite{schiro2013solution}, offers another promising research direction.

Overall, the proposed SLCP method can be viewed as a natural extension of the SQP framework from classical nonlinear programming to the setting of GNEPs. Combining the linearization strategy developed here with augmented Lagrangian techniques in \cite{kanzow2016augmented} provides a promising direction for addressing the issues discussed above.




\appendix

\section{Table of Numerical Results}\label{appendix:numerical results}
This appendix provides additional tables with further details on the numerical results reported in Section~\ref{sec:experiments}. Specifically,
\begin{enumerate}
    \item Tables~\ref{alg:table-time-1}-\ref{alg:table-time-2} present the runtime comparison for all test problems. Entries marked ``--'' indicate that the algorithm failed to obtain a solution within the time limit of 30 minutes. For each problem, the fastest algorithm is highlighted in bold. All algorithms start from the initial point $x^0\cdot \mathbf{1}_n$. 
    \item Table~\ref{alg:table-grad-Hessian-1} summarizes the number of gradient and Hessian evaluations performed by each algorithm. Here, ``Grad'' denotes the total number of gradient evaluations of $\theta^\nu$ and $g^\nu$, while ``Hess'' represents the number of evaluations of the Hessian $\J_x F$.
\end{enumerate}

\begin{table}[htbp]
\caption{Runtime of SLCP, ALM, IPM, and SMM on test problems.}
\label{alg:table-time-1}
\centering
\setlength{\tabcolsep}{10pt}
\renewcommand{\arraystretch}{1.15}  
\begin{tabular}{cccccc}
\toprule
Problem & $x^0$ & SLCP & ALM & IPM & SMM \\
\midrule
A1                            & 0.1 & \textbf{0.024} & 0.033          & 0.033          & 0.037                     \\
                              & 1   & \textbf{0.025} & 0.036          & 0.040          & 0.058                     \\
                              & 10  & \textbf{0.025} & 0.038          & 0.039          & 0.057                     \\ \hline
A2                            & 0.1 & \textbf{0.012} & 0.027          & 0.046          & 0.033                     \\
                              & 1   & \textbf{0.027} & 0.065          & 0.047          & 0.066                     \\
                              & 10  & \textbf{0.027} & 0.062          & 0.042          & 0.065                     \\ \hline
A3                            & 0   & \textbf{0.018} & 0.029          & 0.053          & 0.047                     \\
                              & 1   & \textbf{0.016} & 0.030          & 0.045          & 0.046                     \\
                              & 10  & \textbf{0.019} & 0.030          & 0.044          & 0.065                     \\ \hline
A4                            & 0   & \textbf{0.040} & 0.054          & 0.049          & 0.064                     \\
                              & 1   & \textbf{0.033} & 0.051          & 0.058          & 0.046                     \\
                              & 10  & \textbf{0.042} & 0.053          & 0.056          & 0.070                     \\ \hline
A5                            & 0   & \textbf{0.020} & 0.045          & 0.047          & 0.067                     \\
                              & 1   & \textbf{0.018} & 0.045          & 0.043          & 0.064                     \\
                              & 10  & \textbf{0.020} & 0.043          & 0.041          & 0.066                     \\ 
 
\bottomrule
\end{tabular}
\end{table}

\begin{table}[htbp]
\caption{Runtime of SLCP, ALM, IPM, and SMM on  test problems.}
\label{alg:table-time-2}
\centering
\setlength{\tabcolsep}{10pt}
\renewcommand{\arraystretch}{1.15}  
\begin{tabular}{cccccc}
\toprule
Problem & $x^0$ & SLCP & ALM & IPM & SMM \\
\midrule
A6                            & 0   & \textbf{0.029} & 0.058          & 0.042          & 0.068                     \\
                              & 1   & \textbf{0.026} & 0.058          & 0.042          & 0.072                     \\
                              & 10  & \textbf{0.032} & 0.057          & 0.047          & -                         \\ \hline
A7                            & 0   & \textbf{0.030} & 0.055          & 0.056          & 0.081                     \\
                              & 1   & \textbf{0.031} & 0.054          & 0.059          & 0.043                     \\
                              & 10  & \textbf{0.039} & 0.056          & 0.055          & 0.069                     \\ \hline
A8                            & 0   & \textbf{0.021} & -          & -              & 0.032                     \\
                              & 1   & \textbf{0.019} & 0.021          & -              & 0.066                     \\
                              & 10  & \textbf{0.020} & 0.027          & -              & 0.058                     \\ \hline
A9a                           & 0   & \textbf{0.086} & 0.312          & 0.110          & 0.097                     \\ \hline
A9b                           & 0   & 0.150          & 0.565          & 0.190          & \textbf{0.110}            \\ \hline
A10a                          & 0   & \textbf{0.069} & 0.233          & 0.085          & 0.088                     \\ \hline
A10b                          & 0   & 0.182          & 1.076          & \textbf{0.120} & 4.400                     \\ \hline
A10c                          & 0   & 0.481          & 9.420          & 0.240          & \textbf{0.210}            \\ \hline
A10d                          & 0   & 1.141          & 189.498        & \textbf{0.390} & -                         \\ \hline
A10e                          & 0   & 2.978          & 582.917        & \textbf{0.890} & 66.000                    \\ \hline
A11                           & 0   & \textbf{0.016} & 0.031          & 0.032          & 0.053                     \\
                              & 1   & \textbf{0.017} & 0.028          & 0.031          & 0.038                     \\
                              & 10  & \textbf{0.017} & 0.028          & 0.032          & 0.055                     \\ \hline
A12                           & 0   & \textbf{0.013} & 0.020          & 0.034          & 0.038                     \\
                              & 1   & \textbf{0.012} & 0.020          & 0.034          & 0.039                     \\
                              & 10  & \textbf{0.014} & 0.026          & 0.033          & 0.056                     \\ \hline
A13                           & 0   & \textbf{0.020} & 0.038          & 0.054          & 0.060                     \\
                              & 1   & \textbf{0.022} & 0.036          & 0.040          & 0.060                     \\
                              & 10  & \textbf{0.021} & 0.038          & 0.039          & 0.060                     \\ \hline 
A14                           & 0.1 & \textbf{0.025} & 0.026          & 0.036          & 0.030                     \\
                              & 1   & 0.033          & \textbf{0.027} & 0.039          & 0.057                     \\
                              & 10  & 0.028          & \textbf{0.026} & 0.038          & 0.059                     \\  \hline
A15                           & 0   & \textbf{0.020} & 0.031          & 0.041          & 0.061                     \\
                              & 1   & \textbf{0.020} & 0.026          & 0.041          & 0.063                     \\
                              & 10  & \textbf{0.018} & 0.029          & 0.038          & 0.044            \\ \hline
A16a                          & 10  & \textbf{0.025} & 0.029          & 0.042          & 0.058                     \\ \hline
A16b                          & 10  & \textbf{0.025} & 0.034          & 0.042          & 0.060                     \\ \hline
A16c                          & 10  & \textbf{0.027} & 0.037          & 0.043          & 0.059                     \\ \hline
A16d                          & 10  & \textbf{0.028} & 0.037          & 0.039          & 0.062                     \\ \hline
A17                           & 0   & \textbf{0.021} & 0.031          & 0.036          & 0.058                     \\
                              & 1   & \textbf{0.019} & 0.034          & 0.039          & 0.057                     \\
                              & 10  & \textbf{0.019} & 0.037          & 0.035          & 0.056                     \\ \hline
A18                           & 0   & \textbf{0.031} & 0.060          & 0.060          & 0.074                     \\
                              & 1   & \textbf{0.028} & 0.061          & 0.056          & 0.072                     \\
                              & 10  & \textbf{0.029} & 0.059          & 0.053          & 0.070                     \\ \hline
Harker                        & 1   & \textbf{0.019} & \textbf{0.019} & 0.026          & 0.053                     \\ \hline
Heu                           & 1   & \textbf{0.042} & -        & 0.075          & 0.069                     \\ \hline
NTF1                          & 1   & \textbf{0.019} & 0.024          & 0.026          &     0.053                  \\ \hline
NTF2                          & 1   & \textbf{0.021} & 0.023          & 0.023          &      0.054               \\ \hline
Spam                          & 1   & 12.852         & 27.805         & 27.410         &     \textbf{7.700}          \\ \hline
Lob                           & 1   & \textbf{0.028} & -          & 0.045          &     0.049               \\ 
\bottomrule
\end{tabular}
\end{table}

\begin{table}[htbp]
\centering

\caption{Gradient and Hessian evaluations of SLCP, ALM, IPM, and SMM on test problems.}
\label{alg:table-grad-Hessian-1}
\begin{tabular}{ccccccccccccc}
\toprule
\multirow{2}{*}{\makecell{Problem}} & \multirow{2}{*}{\makecell{$x^0$}} &
\multicolumn{2}{c}{SLCP} &
\multicolumn{2}{c}{ALM} &
\multicolumn{2}{c}{IPM} &
\multicolumn{2}{c}{SMM} 
\\
\cmidrule(lr){3-4} \cmidrule(lr){5-6} \cmidrule(lr){7-8} \cmidrule(lr){9-10}
& & Grad & Hess & Grad & Hess & Grad & Hess & Grad & Hess \\
\midrule
A1      & 0.1 & 8    & 3       & 26   & 13                           & 48                       & 12                           & 50                       & 8                            \\
        & 1   & 14   & 6       & 32   & 16                           & 60                       & 15                           & 18                       & 8                            \\
        & 10  & 14   & 6       & 42   & 21                           & 68                       & 17                           & 14                       & 6                            \\ \hline
A2      & 0.1 & 4    & 1       & 8    & 4                            & 112                      & 27                           & 62                       & 13                           \\
        & 1   & 10   & 4       & 346  & 173                          & 148                      & 37                           & 62                       & 13                           \\
        & 10  & 10   & 4       & 458  & 229                          & 120                      & 30                           & 68                       & 16                           \\ \hline
A3      & 0   & 4    & 1       & 10   & 5                            & 48                       & 12                           & 10                       & 4                            \\
        & 1   & 4    & 1       & 10   & 5                            & 48                       & 12                           & 10                       & 4                            \\
        & 10  & 4    & 1       & 10   & 5                            & 64                       & 16                           & 28                       & 10                           \\ \hline
A4      & 0   & 6    & 2       & 134  & 67                           & 52                       & 27                           & 12                       & 5                            \\
        & 1   & 4    & 1       & 148  & 74                           & 66                       & 34                           & 10                       & 4                            \\
        & 10  & 10   & 4       & 166  & 83                           & 88                       & 45                           & 58                       & 17                           \\ \hline
A5      & 0   & 4    & 1       & 32   & 16                           & 48                       & 12                           & 14                       & 6                            \\
        & 1   & 4    & 1       & 32   & 16                           & 48                       & 12                           & 16                       & 7                            \\
        & 10  & 4    & 1       & 42   & 21                           & 52                       & 13                           & 26                       & 10                           \\ \hline
A6      & 0   & 8    & 3       & 146  & 73                           & 80                       & 20                           & 28                       & 10                           \\
        & 1   & 8    & 3       & 150  & 75                           & 80                       & 20                           & 60                       & 15                           \\
        & 10  & 18   & 8       & 194  & 97                           & 140                      & 35                           & -                      & -                          \\ \hline
A7      & 0   & 6    & 2       & 60   & 30                           & 156                      & 39                           & 82                       & 14                           \\
        & 1   & 6    & 2       & 60   & 30                           & 176                      & 44                           & 64                       & 11                           \\
        & 10  & 22   & 4       & 72   & 36                           & 164                      & 41                           & 42                       & 12                           \\ \hline
A8      & 0   & 4    & 1       & -  & -                          & -                      & -                          & 4408                     & 516                          \\ 
        & 1   & 4    & 1       & 8    & 4                            & -                      & -                          & 2002                     & 236                          \\
        & 10  & 4    & 1       & 22   & 11                           & -                      & -                          & 4284                     & 502                          \\ \hline
A9a     & 0   & 54   & 17      & 318  & 159                          & 96                       & 24                           & 70                       & 23                           \\ \hline
A9b     & 0   & 58   & 17      & 292  & 146                          & 146                      & 36                           & 66                       & 26                           \\ \hline
A10a    & 0   & 22   & 9       & 530  & 265                          & 80                       & 20                           & 50                       & 15                           \\ \hline
A10b    & 0   & 30   & 11      & 740  & 370                          & 68                       & 17                           & 55278                    & 1743                         \\ \hline
A10c    & 0   & 28   & 9       & 3162 & 1581                         & 148                      & 35                           & 426                      & 39                           \\ \hline
A10d    & 0   & 24   & 8       & 3048 & 1524                         & 88                       & 22                           & -                      & -                          \\ \hline
A10e    & 0   & 28   & 8       & 3160 & 1580                         & 96                       & 24                           & 568                      & 48                           \\ \hline

A11     & 0   & 4    & 1       & 32   & 16                           & 44                       & 11                           & 12                       & 5                            \\
        & 1   & 4    & 1       & 32   & 16                           & 44                       & 11                           & 10                       & 4                            \\
        & 10  & 4    & 1       & 34   & 17                           & 44                       & 11                           & 12                       & 5                            \\ \hline
A12     & 0   & 4    & 1       & 8    & 4                            & 44                       & 11                           & 10                       & 4                            \\
        & 1   & 4    & 1       & 8    & 4                            & 44                       & 11                           & 10                       & 4                            \\
        & 10  & 4    & 1       & 8    & 4                            & 44                       & 11                           & 18                       & 7                            \\ \hline
A13     & 0   & 4    & 1       & 48   & 24                           & 40                       & 10                           & 42                       & 8                            \\
        & 1   & 4    & 1       & 34   & 17                           & 52                       & 13                           & 32                       & 7                            \\
        & 10  & 4    & 1       & 36   & 18                           & 48                       & 12                           & 30                       & 7                            \\ \hline
A14     & 0.1 & 8    & 3       & 16   & 8                            & 48                       & 12                           & 48                       & 7                            \\
        & 1   & 20   & 9       & 12   & 6                            & 48                       & 12                           & 12                       & 5                            \\
        & 10  & 20   & 9       & 20   & 10                           & 56                       & 14                           & 14                       & 6                            \\ \hline
A15     & 0   & 4    & 1       & 14   & 7                            & 52                       & 13                           & 18                       & 6                            \\
        & 1   & 4    & 1       & 14   & 7                            & 52                       & 13                           & 14                       & 6                            \\
        & 10  & 4    & 1       & 14   & 7                            & 48                       & 12                           & 10                       & 4                            \\ \hline
A16a    & 10  & 8    & 3       & 54   & 27                           & 60                       & 15                           & 14                       & 6                            \\ \hline
A16b    & 10  & 10   & 4       & 48   & 24                           & 64                       & 16                           & 22                       & 7                            \\ \hline
A16c    & 10  & 10   & 4       & 46   & 23                           & 64                       & 16                           & 16                       & 7                            \\ \hline
A16d    & 10  & 12   & 5       & 38   & 19                           & 56                       & 14                           & 48                       & 11                           \\ \hline
A17     & 0   & 4    & 1       & 38   & 19                           & 68                       & 17                           & 626                      & 62                           \\
        & 1   & 4    & 1       & 52   & 26                           & 68                       & 17                           & 1726                     & 191                          \\
        & 10  & 4    & 1       & 90   & 45                           & 64                       & 16                           & 6874                     & 694                          \\ \hline
A18     & 0   & 4    & 1       & 76   & 38                           & 76                       & 19                           & 14220                    & 3554                         \\
        & 1   & 4    & 1       & 74   & 37                           & 80                       & 20                           & 11386                    & 2848                         \\
        & 10  & 6    & 2       & 90   & 45                           & 72                       & 18                           & 18824                    & 4708                         \\ \hline
Harker  & 1   & 4    & 1       & 10   & 5                            & 60                       & 15                           & 14                       & 6                            \\ \hline
Heu     & 1   & 28   & 13      & -  & -                          & 842                      & 33                           & 32                       & 14                           \\ \hline
NTF1    & 1   & 4    & 1       & 26   & 13                           & 48                       & 12                                    & 22      & 7                   \\ \hline
NTF2    & 1   & 10   & 4       & 94   & 47                           & 48                       & 12    & 12                       & 5                                                    \\ \hline
Spam    & 1   & 6    & 2       & 8    & 4                            & 44                       & 11     & 14                       & 6                                                  \\ \hline
Lob     & 1   & 4    & 1       & -  & -                          & 32                       & 8       & 10                       & 3                                                \\ 
        \bottomrule
\end{tabular}
\end{table}

\section{Additional analysis for the internet switching model}\label{appendix:further analysis for the internet switching model}

As a direct consequence of Theorems~\ref{thm-subproblem-solvability-jointly} and \ref{thm-subproblem-general-solvability},  we analyze the subproblems arising from Example~\ref{exm-internet}. 

\begin{proposition}\label{exm-subproblem-solvability}
Suppose that the lower bounds $l_\nu$ in Example~\ref{exm-internet} satisfy the feasibility condition $\sum_{\nu\in[N]} l_\nu \le B$ . Then, under either of the following conditions, the LC subproblems of Example~\ref{exm-internet} at any $(x,\lam)\in \R^n_{++} \times \R^m_+$ admit a solution:
\begin{enumerate}
    \item All users are constrained by the buffer capacity, i.e., $[N_1]=[N]$;
    \item The index set $[N_1] \neq [N]$, $L_\nu < +\infty$ for all $\nu \in [N] \setminus [N_1]$, and there exists $\hnu \in [N_1]$ such that 
    \begin{equation}\label{coro-subproblem-solvability-equation}
        l_{\hnu} + \sum\limits_{ \mu\in [N] \setminus  [N_1] } L_\mu + \sum\limits_{\mu \in [N_1] \setminus \{ \hnu\} }\min\left\{ \left( B - \sum\limits_{\omega \in [N] \setminus \{ \mu \} } l_\omega \right),L_\mu \right\} \le B.
    \end{equation}
\end{enumerate}
\end{proposition}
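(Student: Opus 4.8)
The plan is to recognise the subproblems of Example~\ref{exm-internet} as instances covered by Theorems~\ref{thm-subproblem-solvability-jointly} and~\ref{thm-subproblem-general-solvability} and to check their hypotheses one by one. The first step is to describe the restricted private strategy sets explicitly. For $\nu\in[N]\setminus[N_1]$ player $\nu$ has no non-private constraint, so $\hC^\nu=C^\nu=\{t\in\R : l_\nu\le t\le L_\nu\}$. For $\nu\in[N_1]$ the only non-private constraint is the common coupling inequality $\sum_{\mu\in[N]}(\cdot)^\mu\le B$; minimising the remaining coordinates over $C^{-\nu}$, each bounded below by $l_\mu$, shows that $\hC^\nu=\{t\in\R : l_\nu\le t\le\min\{L_\nu,\,B-\sum_{\mu\ne\nu}l_\mu\}\}$. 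Hence every $\hC^\nu$ is a closed interval; it is nonempty precisely because $\sum_{\nu}l_\nu\le B$, and it is bounded whenever $\nu\in[N_1]$ or $L_\nu<+\infty$. Moreover \eqref{exm-1} is player-convex, and jointly-convex when $[N_1]=[N]$, so the convexity hypotheses of the two theorems hold for free.

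In case~(i), $[N_1]=[N]$: every $\hC^\nu$ is a bounded, nonempty closed interval, hence compact, and the GNEP is jointly-convex, so Theorem~\ref{thm-subproblem-solvability-jointly} applies and the LC subproblem is solvable at $(x,\lam)$.

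In case~(ii), $[N_1]\ne[N]$: the extra assumption $L_\nu<+\infty$ for $\nu\in[N]\setminus[N_1]$ is exactly what makes $\hC^\nu$ bounded for those players, so all $\hC^\nu$ are nonempty and compact. The GNEP is player-convex, so it remains only to verify hypothesis~(iii) of Theorem~\ref{thm-subproblem-general-solvability}. Since the non-private constraints of all players in $[N_1]$ are the single inequality $\sum_{\mu}(\cdot)^\mu\le B$, a point $p\in\hC=\prod_\nu\hC^\nu$ violates the non-private constraint of some player iff $\sum_{\mu}p^\mu>B$, and then it violates that constraint for every player in $[N_1]$; it therefore suffices to produce a feasibility repair for one conveniently chosen player. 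Take that player to be the index $\hnu$ furnished by~\eqref{coro-subproblem-solvability-equation} and set $\hat p^{\hnu}:=l_{\hnu}$, the left endpoint of $\hC^{\hnu}$. Using $p^\mu\le L_\mu$ for $\mu\in[N]\setminus[N_1]$ and $p^\mu\le\min\{L_\mu,\,B-\sum_{\omega\ne\mu}l_\omega\}$ for $\mu\in[N_1]\setminus\{\hnu\}$, the left-hand side of~\eqref{coro-subproblem-solvability-equation} dominates $l_{\hnu}+\sum_{\mu\ne\hnu}p^\mu$, so $l_{\hnu}+\sum_{\mu\ne\hnu}p^\mu\le B$, which is exactly the inequality demanded by~\eqref{associated-d} for the unique non-private constraint of $\hnu$. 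Theorem~\ref{thm-subproblem-general-solvability} then gives solvability of the LC subproblem.

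The interval computations are routine; the step that needs care is the reduction used in case~(ii), namely that because the coupling constraint $\sum_{\mu}(\cdot)^\mu\le B$ is common to all players in $[N_1]$, hypothesis~(iii) of Theorem~\ref{thm-subproblem-general-solvability} only has to be verified for a single player, which is what makes one index $\hnu$ in~\eqref{coro-subproblem-solvability-equation} enough rather than one for every player of $[N_1]$. A minor bookkeeping point is the coordinate convention under which the linearised box constraints appear directly as $l_\nu\le(\cdot)^\nu\le L_\nu$, identifying $C^\nu$ with $[l_\nu,L_\nu]$; carrying it consistently is all that the first step requires.
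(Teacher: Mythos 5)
Your proposal is correct and follows essentially the same route as the paper: you identify the restricted private strategy sets $\hC^\nu$ explicitly as the intervals $[l_\nu,\min\{L_\nu,\,B-\sum_{\mu\neq\nu}l_\mu\}]$ (resp.\ $[l_\nu,L_\nu]$), invoke Theorem~\ref{thm-subproblem-solvability-jointly} in case~(i), and in case~(ii) verify hypothesis~(iii) of Theorem~\ref{thm-subproblem-general-solvability} by taking the repair $\hat{p}^{\hnu}=l_{\hnu}$ and bounding $l_{\hnu}+\sum_{\mu\neq\hnu}p^\mu$ by the left-hand side of \eqref{coro-subproblem-solvability-equation}, exactly as the paper does. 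Your explicit remark that the shared buffer constraint is common to all players in $[N_1]$, so one index $\hnu$ suffices, is a welcome clarification of a point the paper leaves implicit.
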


\begin{proof}
   Recall that $S = \sum_{\nu \in [N]} x^\nu$. Following \eqref{subproblem-restricted}, consider the restricted GNE subproblem associated with Example~\ref{exm-internet} at $(x,\lam)$, where each player $\nu$ solves 
    \begin{equation*}
    \begin{array}{ll}
		\min\limits_{p^\nu \in \R} &  \left(\frac{1}{B} - \frac{S - x^\nu}{S^2}\right)p^\nu + \frac{S-x^\nu}{S^3}(p^\nu)^2 + \sum\limits_{ \mu \in [N] \setminus \{ \nu \}}\frac{S - 2x^\nu }{S^3}p^\nu p^\mu\\
		\text{s.t.} &  l_\nu \le x^\nu + p^\nu \le \min\left\{L_\nu, B - \sum\limits_{ \mu \in [N]\setminus \{ \nu \} } l_\mu \right\},\\
		& \sum_{\mu} (x^\mu + p^\mu) \le B
	\end{array}
\end{equation*}
    if $\nu \in [N_1]$, and
    \begin{equation*}
    \begin{array}{ll}
		\min\limits_{p^\nu \in \R} &  \left(\frac{1}{B} - \frac{S - x^\nu}{S^2} \right)p^\nu + \frac{S-x^\nu}{S^3}(p^\nu)^2 + \sum\limits_{\mu \in [N] \setminus \{ \nu\} }\frac{S-2x^\nu}{S^3}p^\nu p^\mu\\
		\text{s.t.} &  l_\nu \le x^\nu + p^\nu \le L_\nu
	\end{array}
\end{equation*}
 if $\nu \in [N] \setminus[N_1 ]$. When $[N_1] = [N]$, the result  follows directly from Theorem~\ref{thm-subproblem-solvability-jointly}. It remains to consider the case $[N_1] \neq [N]$. Suppose that there exists $p$ satisfying the private constraints but violating the shared constraint $\sum_\mu (x^\mu + p^\mu) \le B$. Then by \eqref{coro-subproblem-solvability-equation}, there exists $\hnu \in [N_1]$ such that 
 $$
    \begin{aligned}
      & \quad l_{\hnu} + \sum\limits_{\mu \in [N] \setminus \{ \hnu\} } (x^\mu + p^\mu) \\
      & \le   l_{\hnu} + \sum\limits_{ \mu\in [N]\setminus [N_1] } L_\mu + \sum\limits_{\mu \in [N_1] \setminus \{ \hnu\} }\min\left\{ \left( B - \sum\limits_{\omega \in [N] \setminus \{ \mu \}} l_\omega \right),L_\mu \right\}\\ 
      & \le   B.
    \end{aligned}
$$
Applying Theorem~\ref{thm-subproblem-general-solvability} guarantees the existence of a solution, which completes the proof.
 
\end{proof}

It is worth noting that, in practice, the upper bounds typically satisfy $L_\nu = +\infty$ (e.g., instance A14 in  Table~\ref{alg:table-time-2}), implying that the private strategy set of player $\nu$ is not compact. Nevertheless, the existence of a GNE is still guaranteed due to the presence of the shared constraints.

Under mild assumptions, the KKT system associated with Example~\ref{exm-internet} is always strongly regular at some point $(\bx,\blam)$. 

\begin{proposition}\label{prop:exm-strong-regularity}
Suppose that the parameters of Example~\ref{exm-internet} satisfy 
\begin{equation*}
\left\{\begin{array}{ll}
     L_\nu = +\infty\quad \text{for all } \nu \in [N_1],\ \text{and } L_\nu < +\infty\quad \text{for all } \nu \in [N]\setminus [N_1]\\
      \sum\limits_{\nu \in [N]} l_\nu < B, N_1 \ge 3 \\
      N_1^2 (l_\nu)^2 + (2a N_1+B-B N_1 ) l_\nu + a^2 - Ba \le 0 \quad \text{for all}\ \nu \in [N_1],\\
      \frac{1}{B} - \frac{s - l_\nu}{s^2} > 0 \quad \text{for all } \nu \in [N] \setminus [N_1],
\end{array}\right.
\end{equation*}
where $a = \sum_{\nu \in [N]\setminus [N_1]} l_\nu$, $b = \sqrt{ (2a N_1 + B -B N_1)^2 - 4N_1^2 (a^2-Ba)  }$, and $s = \sum_{\nu\in[N_1]} \bx^\nu + \sum_{\nu \in [N]\setminus [N_1]} l_\nu$. Then the KKT system associated with Example~\ref{exm-internet} is strongly regular at $(\bx,\blam)$ with
\begin{equation}\label{coro:exm-solution}
\begin{aligned}
\bx^\nu &= 
\begin{cases}
\frac{B N_1  -B -2 a N_1 + b}{2 N_1^2}, & \text{if } \nu \in [N_1], \\
l_\nu, & \text{otherwise, }
\end{cases}
\\
\blam^\nu_i &=
\begin{cases}
\frac{1}{B}-\frac{s-l_\nu}{s^2}, & \text{if } \nu \in [N]\setminus [N_1],\ i=1, \\
0, & \text{otherwise. }
\end{cases}
\end{aligned}
\end{equation}

\end{proposition}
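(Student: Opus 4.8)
The plan is to apply Theorem~\ref{thm:strong-regularity}: strong regularity at $(\bx,\blam)$ is equivalent to every mixed LCP~\eqref{strong metric regularity-judge} associated with a triple $(J_+,J_0,J_-)\in\I$ having $(\dx,\dlam)=(0,0)$ as its unique solution. Three things are needed: (i) verify that $(\bx,\blam)$ in~\eqref{coro:exm-solution} is a KKT pair of the GNEP in Example~\ref{exm-internet}; (ii) read off the active-set structure at $(\bx,\blam)$ and hence the family $\I$ in~\eqref{index-partition-set}; (iii) show the resulting mixed LCP(s) admit only the trivial solution. The structural fact that makes step~(iii) short is that all constraints in Example~\ref{exm-internet} are affine, so $\J_x F(\bx,\blam)$ coincides with the game pseudo-Hessian, the matrix whose $(\nu,\mu)$ entry is $\tfrac{\partial^2\theta^\nu}{\partial x^\nu\partial x^\mu}(\bx)$, and restricted to the block of buffer-constrained (informed) players this matrix is a rank-one perturbation of a multiple of the identity.

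For step~(i): the components $\bx^\nu$, $\nu\in[N_1]$, in~\eqref{coro:exm-solution} share a common value $y$, and $s=N_1 y+a$ with $a=\sum_{\nu\in[N]\setminus[N_1]}l_\nu$. The informed players' stationarity condition $\grad_{x^\nu}\theta^\nu(\bx)=\tfrac1B-\tfrac{s-y}{s^2}=0$ reads $s^2=B(s-y)$, which after substituting $s=N_1 y+a$ becomes exactly $N_1^2 y^2+(2aN_1+B-BN_1)y+a^2-Ba=0$, whose larger root is the value in~\eqref{coro:exm-solution}. The hypothesis $N_1^2 l_\nu^2+(2aN_1+B-BN_1)l_\nu+a^2-Ba\le 0$ places each informed $l_\nu$ between the two roots, so $l_\nu\le y$ and the informed lower bounds hold (generically strictly); the identity $b^2=B^2(1-N_1)^2+4aN_1 B$ together with $a\le\sum_{\nu\in[N]}l_\nu<B$ gives $s<B$, so the shared constraint is slack; and $L_\nu<+\infty$ with $l_\nu<L_\nu$ keeps the uninformed upper bounds slack. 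Finally the multiplier $\blam^\nu_i=\tfrac1B-\tfrac{s-l_\nu}{s^2}$ attached to the active lower bound of an uninformed player is nonnegative precisely because $l_\nu\ge y$ (equivalently $s^2=B(s-y)\ge B(s-l_\nu)$); all remaining multipliers vanish, so dual feasibility and complementarity are immediate.

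For steps~(ii)--(iii): at $(\bx,\blam)$ the only active constraint of an uninformed player is its strongly active private lower bound, while informed players have no active constraint. In the non-degenerate case (all of the inequalities above strict) $I^\nu_0=\emptyset$ for every $\nu$, so $\I$ is the single triple with $J_+$ the uninformed lower bounds, $J_0=\emptyset$, and $J_-$ everything else, and~\eqref{strong metric regularity-judge} is a linear system. Its conditions force $\dx^\nu=0$ for each uninformed $\nu$ (the corresponding constraint gradient being $-1$ in that coordinate) and $\dlam^\nu_i=0$ for every inactive constraint, so $\dlam$ is supported only on the uninformed lower bounds, whose gradients in turn are supported only on the uninformed coordinates. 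Restricting the first block equation $\J_x F(\bx,\blam)^\T\dx-\J_x G(\bx)^\T\dlam=0$ to the informed coordinates then removes the $\dlam$-term and leaves $M\,\dx_{[N_1]}=0$, where $M:=\bigl(\J_x F(\bx,\blam)\bigr)_{[N_1]\times[N_1]}$ (a symmetric block, since all informed components of $\bx$ coincide), while the uninformed coordinates of that equation only express $\dlam$ in terms of $\dx$.

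So everything reduces to: $M$ is nonsingular. Differentiating $\grad_{x^\nu}\theta^\nu=\tfrac1B-\tfrac{s-x^\nu}{s^2}$ shows $M=\tfrac1{s^2}\mathbf{I}_{N_1}+\tfrac{s-2y}{s^3}\mathbf{1}\mathbf{1}^\T$, so by the matrix-determinant lemma
\[
\det M=\frac{1}{s^{2N_1}}\Bigl(1+N_1\cdot\frac{s-2y}{s}\Bigr)=\frac{(N_1-1)s+2a}{s^{2N_1+1}},
\]
using $N_1 y=s-a$. This is nonzero because $s>0$ and $(N_1-1)s+2a>0$ under the hypotheses: for $N_1\ge 2$ it is positive; for $N_1=1$ one has $a>0$ since an uninformed player exists (the case $N_1=N=1$ is excluded, as then $a=0$ and the quadratic condition $l_\nu^2\le 0$ is infeasible). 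Hence $\dx_{[N_1]}=0$, so $(\dx,\dlam)=(0,0)$ and strong regularity follows. In the degenerate case $\I$ is a finite family whose mixed LCPs differ from the one above only by extra equations $\dx^\nu=0$ on some informed players, which reduces the claim to nonsingularity of a principal submatrix of $M$ — again of the form $\tfrac1{s^2}\mathbf{I}+\tfrac{s-2y}{s^3}\mathbf{1}\mathbf{1}^\T$ and nonsingular by the same computation. I expect the main obstacle to be step~(i), not step~(iii): carefully locating the roots of the quadratic, checking primal feasibility of all three constraint families, and — most delicately — confirming the sign $\blam^\nu_i\ge 0$ of the uninformed multipliers (which amounts to $l_\nu\ge y$ and must be extracted from, or adjoined to, the stated hypotheses) is where the real work lies; once the affine structure identifies $\J_x F(\bx,\blam)$ with the pseudo-Hessian, the rest is the one-line determinant above.
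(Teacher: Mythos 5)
Your proposal follows essentially the same route as the paper's own proof: verify that \eqref{coro:exm-solution} is a KKT pair, note that the only strongly active constraints are the uninformed players' lower bounds so that $\I$ collapses (in the nondegenerate case) to a single index triple, invoke Theorem~\ref{thm:strong-regularity}, and reduce everything to nonsingularity of the informed block of $\J_x F(\bx,\blam)^\top$ via the rank-one determinant formula. Your determinant $\det M=\frac{(N_1-1)s+2a}{s^{2N_1+1}}$ is the correct value for the matrix the paper actually defines (the paper's displayed expression $1+\sum_{\nu}\frac{2\bx^\nu-s}{s}$ contains a sign slip; with the correct sign it is exactly your expression, and the conclusion is unchanged), and your treatment of the degenerate informed case goes beyond what the paper records, which silently assumes $I^\nu_0=\emptyset$; just note that for $J_0$-placements the associated systems carry sign constraints rather than the extra equations $\dx^\nu=0$, but positive definiteness of $\frac{1}{s^2}\mathbf{I}+\frac{s-2y}{s^3}\mathbf{1}\mathbf{1}^\top$ (and of its principal submatrices) disposes of those as well. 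The one step you leave open, nonnegativity (indeed strict positivity, to justify $I^\nu_+=\{1\}$) of the uninformed multipliers, equivalently $l_\nu\ge y$ for $\nu\in[N]\setminus[N_1]$, is in fact \emph{not} derivable from the stated hypotheses: for instance $N=2$, $N_1=1$, $B=1$, $l_1=l_2=0.01$, $L_2=1$ satisfies all three conditions, yet $y=0.09$, $s=0.1$ and $\blam^2_1=\frac1B-\frac{s-l_2}{s^2}=1-9<0$, so the point in \eqref{coro:exm-solution} is not even a KKT pair there. Hence your instinct that $l_\nu\ge y$ must be adjoined to (rather than extracted from) the hypotheses is correct; the paper's proof buries this inside ``straightforward to verify,'' so you have not omitted anything the paper actually supplies.
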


\begin{proof}
It is straightforward to verify that \eqref{coro:exm-solution}
gives a KKT pair of Example~\ref{exm-internet}. It remains to show that it is strongly regular. Using the notations \eqref{index-partition-set}, we have
\begin{equation*}
    I^\nu_+ = 
    \begin{cases}
    \{1\}, & \text{if } \nu \in [N] \setminus [N_1],\\
    \emptyset, & \text{Otherwise,}
    \end{cases}
    \quad  I^\nu_- \cup I^\nu_+ = [m_{\nu}]\quad \text{for all } \nu \in [N].
\end{equation*}
Let $N_2 = N - N_1$, and define $M\in \R^{N\times N_1}$ by
$$
    M:= 
    \begin{pmatrix}
     s-2\bx^1 & \cdots & s-2\bx^{N_1} &\\
   \vdots & \vdots & \vdots \\
   s-2\bx^{1} & \cdots & s-2\bx^{N_1} 
    \end{pmatrix} + 
   \begin{pmatrix}
   s \mathbf{I}_{N_1}\\
   \mathbf{0}_{N_2 \times N_1} 
   \end{pmatrix},
$$
where $s = \sum_{\nu \in [N]} \bx^\nu$. By Theorem~\ref{thm:strong-regularity}, it suffices to show that 
\begin{equation*}
    \begin{pmatrix}
    \frac{M}{s^3} & \begin{array}{c}
    \mathbf{0}_{N_1 \times N_2}\\
    \mathbf{I}_{N_2}
    \end{array}
    \end{pmatrix} \dz = 0,
\end{equation*}
admits $\dz=0$ as its unique solution. Since 
\begin{equation*}
    {\rm det} \begin{pmatrix}
    \frac{M}{s^3} & \begin{array}{c}
    \mathbf{0}_{N_1 \times N_2}\\
    \mathbf{I}_{N_2}
    \end{array}
    \end{pmatrix}
     = \frac{1}{s^{2 N_1}} \left( 1 + \sum\limits_{\nu \in [N_1]} \frac{2\bx^\nu-s}{s} \right) \neq 0,
\end{equation*}
the desired result follows.
\end{proof}

Based on Proposition~\ref{prop:exm-strong-regularity}, instance A1 in Table~\ref{alg:table-time-1}, characterized by $N=10$, $N_1 = 9$, $B=1$, with $0.3 \le x^1 \le 0.5$ and $x^\nu \ge 0.01$ for all $\nu \neq 1$, admits a strongly regular solution given by
\begin{equation*}
\begin{aligned}
\bx^\nu &= 
\begin{cases}
\frac{1.3 + \sqrt{18.7}}{81} & \text{if } 2\le \nu \le 10, \\
0.3 & \text{if } \nu = 1,
\end{cases}
\quad
&\blam^\nu_i &=
\begin{cases}
1-\frac{s-0.3}{s^2} & \text{if } i = \nu=1, \\
0 & \text{otherwise. }
\end{cases}
\end{aligned}
\end{equation*}
In addition, the iterates generated by the SLCP method exhibit local quadratic convergence towards this solution (see Figure~\ref{fig:A1}), consistent with Theorem~\ref{thm:local quadratic convergence}.

\bibliographystyle{spmpsci}
\bibliography{references}


%
%



\end{document}